\documentclass{article}
\usepackage[utf8]{inputenc}
\usepackage[english]{babel}
\usepackage{geometry}
\usepackage{pdfpages}
\usepackage{url}
\usepackage[normalem]{ulem}

\usepackage{amssymb}
\usepackage{amsthm}
\usepackage{amsmath}
\usepackage{setspace}
\usepackage{listings}
\usepackage[normalem]{ulem}

\usepackage{algorithmicx}
\usepackage[noend]{algpseudocode}
\usepackage{varwidth}

\newtheorem{theorem}{Theorem}

\newtheorem{lemma}[theorem]{Lemma}
\newtheorem{corollary}[theorem]{Corollary}

\theoremstyle{definition}
\newtheorem{definition}[theorem]{Definition}

\newcommand{\il}[1]{\text{\textbf{IL}\textsf{#1}}}
\newcommand{\ilgen}[1]{\text{\textbf{IL}\textsubscript{gen}\textsf{#1}}}
\newcommand{\kgen}[1]{\text{(\textsf{{#1}})\textsubscript{gen}}}
\newcommand{\R}{\ensuremath{{\widetilde{R}}}}

\newcommand{\ilmn}[0]{\il{M\textsubscript{0}}}

\newcommand{\Rt}[0]{ \widetilde{R} }		
\newcommand{\St}[1]{ \widetilde{S}_{[#1]} }		
\newcommand{\Swt}[0]{ \St{w} }

\usepackage{amsmath}
\usepackage{tikz}
\usepackage{mathdots}
\usepackage{yhmath}
\usepackage{cancel}
\usepackage{color}
\usepackage{siunitx}
\usepackage{array}
\usepackage{multirow}
\usepackage{amssymb}
\usepackage{gensymb}
\usepackage{tabularx}
\usepackage{booktabs}
\usepackage{wasysym}
\usetikzlibrary{fadings}

\usepackage{datetime}
\title{Interpretability logics and generalized Veltman semantics}

\author{Luka~Mikec\footnote{Department of Mathematics, Faculty of Science, University of 
Zagreb, Croatia\newline \hspace*{5.2mm}Supported by Croatian Science Foundation (HRZZ) under 
the projects UIP--05--2017--9219 and IP--01--2018--7459.} \\
\texttt{luka.mikec@math.hr}
\and
Mladen~Vukovi\'{c}\footnote{Department of Mathematics, Faculty of Science, University of Zagreb, 
Croatia\newline \hspace*{5.2mm}Supported by Croatian Science Foundation (HRZZ) under 
the project IP--01--2018--7459}\\
\texttt{vukovic@math.hr}}

\begin{document}

\date{ \today, \currenttime }
\maketitle

\begin{abstract}
We obtain modal completeness of the interpretability logics 
\il{P\textsubscript{0}} and \il{R}
w.r.t.\ generalized Veltman semantics.
Our proofs are based on the notion of full labels \cite{Bilkova-Goris-Joosten}.
We also give shorter proofs of completeness w.r.t.\ generalized semantics for many classical 
interpretability logics.
We obtain decidability and finite model property w.r.t.\ generalized semantics for 
\il{P\textsubscript{0}} and \il{R}.
Finally, we develop a construction that might be useful for proofs of completeness of
extensions of \il{W} w.r.t. generalized semantics in the future, and demonstrate its usage with 
$\il{W*} = \il{M\textsubscript{0}W}$.
\end{abstract}

Keywords: interpretability logic, Veltman semantics, completeness, filtration, finite model 
property, decidability

\section{Introduction}
\subsection{Interpretability logics}

The language of interpretability logics is given by $A::=p\,|\,\bot\,|\,A\to A\,|\,A\rhd A,$
where $p$ ranges over a countable set of propositional variables. 
Other Boolean connectives are defined as abbreviations, as usual. 
Since $\square A$ can be defined (over extensions of \il) as an abbreviation too (exapnded to 
$\neg A\rhd\bot$), we do not include $\square$ or $\Diamond$ in the language.
If $A$ is constructed in this way, we will say that $A$ is a modal formula.

\begin{definition}
	Interpretability logic \il{} is given by the following list of axiom schemata.
	\begin{enumerate}
		\item classical tautologies (in the new language);
		\item[K] $\square (A\rightarrow B)\rightarrow (\square A\rightarrow \square B)$;
		\item[L] $\square (\square A\rightarrow A)\rightarrow \square A$;
		\item[J1] $\square (A\rightarrow B)\rightarrow A\rhd B$;
		\item[J2] $(A\rhd B) \wedge (B\rhd C) \  \rightarrow \  A\rhd C$;
		\item[J3] $(A\rhd C) \wedge (B\rhd C) \  \rightarrow \  A\vee B\rhd C$;
		\item[J4] $A\rhd B\rightarrow(\Diamond A \rightarrow \Diamond B)$;
		\item[J5] $\Diamond A \rhd A$.
	\end{enumerate}
	Rules of inference are modus ponens and necessitation (generalization).
\end{definition}

We treat $\rhd$ as having higher priority than $\to$, but lower than other logical connectives. 
Other interpretability logics are obtained by extending \il{} with further schemata (``principles of interpretability'').

Let us briefly describe the motivation behind studying interpretability logics. 
In a sufficiently strong formal theory $T$ in the language $\mathcal L_T$, one can construct a binary interpretability predicate $\mathsf{Int}_T$. 
This predicate expresses that one finite extension of $T$ interprets another finite extension of $T$.
Any mapping $A \mapsto A^*$ where $A$ is a modal formula and $A^* \in \mathcal L_T$, such that:
\begin{itemize}
	\item  it commutes with logical connectives;
	\item  if $p$ is a propositional variable, $p^*$ is a sentence;
	\item  $(A \rhd B)^*$ = $\mathsf{Int}_T(\lceil A^* \rceil, \lceil B^* \rceil)$, where $\lceil X \rceil$ is the numeral of the Gödel number of $X$;
\end{itemize}
is called \textit{an arithmetical interpretation}.
The \textit{interpretability logic of} $T$, denoted by $\il{(T)}$, is the set of all modal formulas $A$
such that $T \vdash A^*$ for all arithmetical interpretations.
Berarducci \cite{Berarducci}  and Shavrukov \cite{Shavrukov-88} independently proved that $\il{(T)}=\il{M}$, if 
$T$ is an essentially  reflexive theory.
Visser \cite{Visser90} proved that $\il{(T)}=\il{P}$, if $T$ is finitely axiomatizable and 
contains \texttt{supexp}.

One of the major open problems in the field is to determine \il{(All)}, the interpretability logic of all 
``reasonable arithmetical theories.''
The ongoing search for \il{(All)} is the main motivation behind studying extensions of \il{}. Studying modal properties of lower bounds of \il{(All)} turns out to be useful for finding new principles within \il{(All)}. For example, the principle \textsf{R} was discovered while trying to prove modal completeness of \textsf{ILP\textsubscript{0}W} \cite{Goris-Joosten-11}. 

Definitions and furher details can be found in e.g.\ \cite{Visser98}.

\subsection{Semantics}
The most commonly used semantics for the interpretability logic \il{} and its extensions is the \textit{Veltman semantics} (or \textit{ordinary Veltman semantics}). 
\begin{definition}[\cite{deJongh-Veltman-90}, Definition 1.2]
	A Veltman frame $\mathfrak{F}$ is a structure $(W,R, \{S_w : w\in W \})$, 
	where $W$ is a non-empty set, $R$ is a transitive and converse well-founded binary relation on 
	$W$ and for all $w\in W$ we have:
		\begin{itemize}
			\item[a)] $S_w\subseteq R[w]^2$, where $R[w]=\{x \in W : wRx\}$; 
			\item[b)] $S_w$ is  reflexive on $R[w]$; 
            \item[c)] $S_w$ is transitive;
			\item[d)] if $wRuRv$ then $uS_w v$.
		\end{itemize}
\end{definition}
A \textit{Veltman model} is a quadruple $\mathfrak{M}=(W,R, \{S_w : w\in W \}, \Vdash)$,
    where the first three components form a generalized Veltman frame.
The forcing relation $\Vdash$ is extended as usual in Boolean cases, 
and $w\Vdash A\rhd B$ holds if and only if for all $u$ such that $wRu$ and $u\Vdash A$ 
there exists $v$ such that $uS_w v$ and $v\Vdash B$.

In what follows we will mainly use the following semantics, which we will refer to as the \textit{generalized Veltman semantics}. De Jongh defined this specific generalization of Veltman semantics. The main purpose of its introduction, and until recently the only usage, was to show independence of certain extensions of \il{}, by Verbrugge (private correspondence), Vukovi\'c \cite{Vukovic99} and Goris and Joosten \cite{Goris-Joosten-11}.

\begin{definition}
	A generalized Veltman frame $\mathfrak F$ is a structure $(W,R,\{S_w:w\in W\})$, 
	where $W$ is a non-empty set, $R$ is a transitive and converse well-founded binary relation on 
$W$ and for all $w\in W$ we have:
	\begin{itemize}
		\item[a)] $S_w\subseteq R[w]\times \left(\mathcal{P}( {R[w]})\setminus\{\emptyset\}\right)$;
		\item[b)] $S_w$ is quasi-reflexive: $wRu$ implies $uS_w\{u\}$;
		\item[c)] $S_w$ is quasi-transitive: if $uS_wV$ and $vS_wZ_v$ for all $v\in V$, then  $uS_w(\bigcup_{v\in V}Z_v)$;
		\item[d)] if $wRuRv$, then $uS_w\{v\}$;
		\item[e)] monotonicity: if $uS_wV$ and $V\subseteq Z\subseteq R[w]$, then $uS_wZ$.
	\end{itemize}
\end{definition}
A \textit{generalized Veltman model} is a quadruple $\mathfrak{M}=(W,R, \{S_w : w\in W \}, \Vdash)$,
    where the first three components form a Veltman frame. 
Now $w\Vdash A\rhd B$ holds if and only if for all $u$ such that $wRu$ and $u\Vdash A$ 
there exists $V$ such that $uS_w V$ and $V\Vdash B$. 
We write $V \Vdash B$ if $v \Vdash B$ for all $v \in V$.

\subsection{Principles, completeness and decidability}
Let us review some relevant results and approaches.  When we need to refer to an extension of \il{} (an arbitrary extension if not stated otherwise), we will write \il{X}. 

Let (\textsf{X}) (resp.\ \kgen{X}) denote a formula of first-order or higher-order logic  such that for all ordinary (resp.\ generalized) Veltman 
frames $\mathfrak{F}$
the following holds:
$$ \mathfrak{F}\Vdash \mathsf{X} \ \mbox{ if and only if } \ 
    \mathfrak{F}\models (\mathsf{X}) \text{\ (resp.\ } \mathfrak{F}\models \kgen{X} \text{)}. $$
Formulas $(\mathsf{X})$ and \kgen{X} are called characteristic properties (or frame conditions) of the given logic \il{X}. 
The class of all ordinary (resp.\ generalized) Veltman frames $\mathfrak{F}$ such that $\mathfrak{F}\models (\mathsf{X}) \text{\ (resp.\ } \mathfrak{F}\models \kgen{X} \text{)}$ is called the chararacteristic class of (resp.\ generalized) frames for \il{X}.
If $\mathfrak{F}\models \kgen X$ we also say that the frame $\mathfrak F$ possesses the property \kgen{X}.  
We say that an ordinary (resp.\ generalized) Veltman model $\mathfrak{M}=(W,R,\{ S_w:w\in W\},\Vdash)$ is an 
\il{X}-model (resp.\ \ilgen{X}-model), or that model $\mathfrak{M}$ possesses the property (\textsf{X}) (resp.\ 
\kgen{X}), if the frame $(W,R,\{S_w:w\in W\})$ possesses the property (\textsf{X}) (resp. \kgen{X}). 
A logic \il{X} will be said to be complete with respect to ordinary (resp.\ generalized) semantics if for all 
modal formulas $A$ we have that validity of $A$ over all \il{X}-frames (resp.\ all \ilgen{X}-frames) implies 
$\il{X} \vdash A$.

We say that \il{X} has finite model property (FMP) w.r.t.\ ordinary (resp.\ generalized) semantics if for each 
formula $A$ satisfiable in some \il{X}-model (resp.\ \ilgen{X}-model), $A$ is also satisfiable in some finite 
\il{X}-model (resp.\ \ilgen{X}-model).

If we include results from the current paper, we have the
following table. Here, \textit{o} stands for the ordinary Veltman semantics, and \textit{g} for the generalized 
Veltman semantics (as defined earlier).\footnote{If we adhere to our definitions, \il{P\textsubscript{0}} has FMP w.r.t.\ ordinary semantics, but this is not a useful result since it is incomplete w.r.t.\ ordinary semantics. }

\begin{center}
\begin{tabular}{c|c|c|c|c|c}
& principle & compl.\ (o) & compl.\ (g) & FMP (o) &  FMP (g)  \\
\hline
\textsf{M} & $A\rhd B\to A\wedge\square C \rhd B\wedge\square C$ & +  & + & + & + \\
\hline
\textsf{M\textsubscript{0}} & $A\rhd B\to\Diamond A\wedge\square C\rhd B\wedge\square C$ & +  & + & ? & +\\
\hline
\textsf{P} & $A\rhd B\to\square(A\rhd B)$ & + & + & + & + \\
\hline
\textsf{P\textsubscript{0}} & $A\rhd\Diamond B\to\square(A\rhd B)$ & - & + & / & + \\
\hline
\textsf{R} & $A\rhd B\to \neg(A\rhd\neg C)\rhd  B\wedge\square C$ & ? & + & ? & + \\
\hline
\textsf{W} & $A\rhd B\to A\rhd B\wedge\square\neg A$ & + & + & + & + \\
\hline
\textsf{W}$^*$ & $A\rhd B\to B\wedge\square C\rhd B\wedge\square C\wedge\square\neg A$ & + & + & ? & +\\
\hline
\end{tabular}
\end{center}

\vskip 2ex
De~Jongh and Veltman proved the completeness of the logics \il{}, \il{M} and \il{P}  w.r.t.\ their characteristic classes of ordinary (and finite) Veltman frames in \cite{deJongh-Veltman-90}.
As is usual for extension of the provability logic \textbf{GL}, all completeness proofs suffer from compactness-related issues. 
One way to go about this is to define a (large enough) adequate set of formulas and let worlds be maximal consistent subsets of such sets (used e.g. in \cite{deJongh-Veltman-90}). With interpretability logics and ordinary semantics, worlds have not been identified (just) with sets of formulas, because of issues described in e.g. \cite{deJongh-Veltman-90}. In  \cite{deJongh-Veltman-99}, de Jongh and Veltman proved completeness of the logic \il{W}  w.r.t.\ its characteristic class of ordinary (and finite) Veltman frames.

\vskip 2ex
Goris and Joosten introduced a more robust approach to proving completeness of interpretability logics, the \textit{construction method} \cite{Goris-Joosten-08}, \cite{Goris-Joosten-11}. 
In this type of proofs, one builds models step by step, and the final model is retrieved as a union. 
While closer to intuitions and more informative than the standard proofs, these proofs are hard to produce and verify due to their size. (They might have been shorter if tools from \cite{Bilkova-Goris-Joosten} have been used from the start.) 
For the purpose for which they were invented (completeness of \ilmn{} and \il{W*} w.r.t.\ ordinary semantics) they are still the only known tools. 

\vskip 2ex
The completeness of interpretability logics w.r.t.\ generalized semantics is usually an easy consequence of the 
completeness w.r.t.\ ordinary semantics.
In \cite{Perkov-Vukovic-16} and \cite{Mikec-Perkov-Vukovic-17}, filtration technique was used to prove the 
finite model property of \il{} and its extensions \il{M}, \ilmn \ and \il{W}$^*$ w.r.t.\ generalized 
Veltman semantics. 
Generalized semantics was used because issues occurred when merging worlds of ordinary Veltman models. 
Those explorations yielded some decidability results.

\vskip 2ex
The aim of this paper is to show completeness (w.r.t.\ generalized semantics) and decidability of some interpretability logics. We introduce a very direct type of proof for proving completeness; similar to \cite{deJongh-Veltman-90} in their general approach. We use \textit{smart labels} from \cite{Bilkova-Goris-Joosten} for this purpose. 
An example that illustrates benefits of using generalized semantics will be given in the section dedicated to \ilmn.

\vskip 2ex
The main new results of this paper are completeness and finite model property (and thus decidability) of \il{R} and \il{P\textsubscript{0}}. The principle \textsf{R} is important because it forms the basis of the, at the moment, strongest candidate for \il{(All)}. Results concerning the principle \il{P\textsubscript{0}} are interesting in a different way; they answer an old question: is there an unravelling technique that transforms generalized \il{X}-models to ordinary \il{X}-models, that preserves satisfaction of relevant characteristic properties? The answer is \textit{no}: we find \il{P\textsubscript{0}} to be complete w.r.t.\ generalized semantics, but it is known to be incomplete w.r.t.\ ordinary semantics.

Other results include reproving some known facts with (much) shorter proofs. 
Of particular interest is the logic \il{W}, which was known to be complete and decidable, 
    but for which we nevertheless reprove completeness w.r.t.\ gerenalized semantics using our approach.
We will explain our motivation for doing so in the section dedicated to \il{W}.

\section{Completeness w.r.t.\ generalized semantics}

In what follows, ``formula'' will always mean ``modal formula''. If the ambient logic in some context is \il{X}, a maximal consistent set w.r.t.\ \il{X} will be called an \il{X}-MCS. Let us now introduce \textit{smart labels} from \cite{Bilkova-Goris-Joosten}.

\begin{definition}[\cite{Bilkova-Goris-Joosten}, slightly modified Definition 3.1]
    \label{def-smart-label}
    Let $w$, $u$ and $x$ be some \il{X}-MCS's, and let $S$ and $T$ be arbitrary sets of 
    formulas.
    We define $w \prec_S u$ if for any finite $S' \subseteq S$ and any formula $A$ 
    we have that $A \rhd \bigvee_{G\in S'}\neg G \in w$ implies 
    $\neg A, \square \neg A \in u.$
\end{definition}
Note that the small differences between our Definition \ref{def-smart-label} and Definition 3.1 \cite{Bilkova-Goris-Joosten} do not affect results of \cite{Bilkova-Goris-Joosten} that we use.

\vskip 2ex
\noindent
\begin{definition}[\cite{Bilkova-Goris-Joosten}, page 4]
    \label{def-smart-label-sets}
    Let $u$ be an \il{X}-MCS, and $S$ an arbitrary set of formulas. Put:
    \begin{align*}
    	u_S^\square &= \{ \square \neg A : S' \subseteq S, \ S' \text{ finite}, \ 
    	A \rhd\bigvee_{G\in S'}  \neg G\in u \}; \\
       	u_S^\boxdot &= \{ \neg A, \square \neg A : S' \subseteq S, \ S' \text{ finite}, \ 
       	A \rhd\bigvee_{G\in S'}  \neg G\in u \}.
   \end{align*}
\end{definition}
Thus, $w \prec_S u$ if and only if $w_S^\boxdot \subseteq u$.
If $S = \emptyset$ then $u_\emptyset^\square = \{ \square \neg A : A \rhd \bot \in u \}$. 
Since $u$ is maximal consistent, usages of this set usually amount to the same as of 
$\{ \square A : \square A \in u \}.$ 

We will usually write $w \prec u$ instead of $w \prec_\emptyset u$.
\begin{lemma}[\cite{Bilkova-Goris-Joosten}, Lemma 3.2]
\label{lema3.2}
Let $w$, $u$ and $v$ be some \il{X}-MCS's, and let $S$ and $T$ be some sets of formulas. Then we have:
\begin{itemize}
    \item[a)] if $S\subseteq T$ and $w\prec_T u$, then $w\prec_S u$;
    \item[b)] if $w\prec_S u\prec v$, then $w\prec_S v$;
    \item[c)] if $w\prec_S u$, then $ S\subseteq u$.
\end{itemize}
\end{lemma}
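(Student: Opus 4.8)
The plan is to prove Lemma~\ref{lema3.2} by unwinding the definitions of $\prec_S$ and of the sets $u_S^\square$, $u_S^\boxdot$, and then using elementary properties of maximal consistent sets together with the interpretability axioms \textsf{J1}--\textsf{J3}. Throughout I would keep in mind the reformulation noted just before the lemma: $w \prec_S u$ holds exactly when $w_S^\boxdot \subseteq u$, and similarly $w \prec_\emptyset v$ means $\{\square\neg A : A\rhd\bot\in w\}\subseteq v$ (up to the MCS-observation that this coincides with $\{\square A:\square A\in w\}$).

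\textbf{Part a).} Suppose $S\subseteq T$ and $w\prec_T u$. To show $w\prec_S u$, take any finite $S'\subseteq S$ and any formula $A$ with $A\rhd\bigvee_{G\in S'}\neg G\in w$. Since $S\subseteq T$, the set $S'$ is also a finite subset of $T$, so the very same instance of the definition of $w\prec_T u$ applies and yields $\neg A,\square\neg A\in u$. Hence $w\prec_S u$. (Equivalently: $S\subseteq T$ immediately gives $w_S^\boxdot\subseteq w_T^\boxdot$, and $w_T^\boxdot\subseteq u$ by hypothesis.)

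\textbf{Part b).} Assume $w\prec_S u$ and $u\prec v$, i.e.\ $u\prec_\emptyset v$. Let $S'\subseteq S$ be finite and $A$ a formula with $A\rhd\bigvee_{G\in S'}\neg G\in w$. From $w\prec_S u$ we get $\neg A,\square\neg A\in u$; in particular $\square\neg A\in u$. Now I want to push this through $u\prec v$ to conclude $\neg A,\square\neg A\in v$. Using that $u$ is an MCS containing $\square\neg A$, together with axiom \textsf{L} (or the derived transitivity $\square\neg A\to\square\square\neg A$, which is a \textbf{GL}-theorem) and \textsf{J1} ($\square(\neg A\to\bot)\to \neg A\rhd\bot$, giving $\neg\neg A\rhd\bot$, i.e.\ taking the right formula), one shows $\square\neg A\in u$ forces a suitable $\rhd\bot$ statement in $u$; then $u\prec_\emptyset v$ delivers $\square\neg(\text{that formula})\in v$ and, via MCS reasoning, $\neg A,\square\neg A\in v$. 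More directly: $\square\neg A\in u$ means (since $u$ is an MCS) $\neg A\rhd\bot\in u$ is equivalent to having $\square\neg A\in u$; apply the definition of $u\prec_\emptyset v$ with the empty $S'$ and the formula $A$, noting $\bigvee_{G\in\emptyset}\neg G=\bot$, to obtain $\neg A,\square\neg A\in v$. This is exactly what is needed, so $w\prec_S v$.

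\textbf{Part c).} Assume $w\prec_S u$; we must show $S\subseteq u$. Let $G\in S$. Take the singleton $S'=\{G\}\subseteq S$ and the formula $A=\neg G$. Then $\bigvee_{H\in S'}\neg H=\neg G$, so $A\rhd\bigvee_{H\in S'}\neg H$ is $\neg G\rhd\neg G$, which is a theorem of \il{} (via \textsf{J1} applied to $\square(\neg G\to\neg G)$) and hence lies in $w$. The definition of $w\prec_S u$ then gives $\neg A,\square\neg A\in u$, i.e.\ $\neg\neg G\in u$, so $G\in u$ since $u$ is an MCS. As $G$ was arbitrary, $S\subseteq u$.

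The only place requiring genuine care is part b): one must make sure the "step" through $u\prec v$ is applied to the right formula and the right (empty) finite subset, and that the passage from $\square\neg A\in u$ to the hypothesis of $u\prec_\emptyset v$ uses only MCS-properties and the equivalence of $\square\neg A\in u$ with $\neg A\rhd\bot\in u$ (an instance of \textsf{J1} in one direction and the definition $\square X:=\neg X\rhd\bot$ in the other). Parts a) and c) are essentially bookkeeping. I would present b) last among the conceptual points, or first if I want the reader to see the main mechanism early; either way the write-up is short.
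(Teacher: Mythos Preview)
Your proof is correct; the paper itself does not prove this lemma but merely cites it from \cite{Bilkova-Goris-Joosten}, and your argument follows the natural unwinding of Definition~\ref{def-smart-label} that one would expect. One small cleanup for part~b): the first paragraph there is garbled (the instance of \textsf{J1} you write, $\square(\neg A\to\bot)\to\neg A\rhd\bot$, has antecedent $\square\neg\neg A$, not $\square\neg A$, so it does not apply), but your ``more directly'' version is exactly right---from $\square\neg A\in u$ one gets $A\rhd\bot\in u$ (since $\square\neg A$ abbreviates $\neg\neg A\rhd\bot$ and $\vdash A\rhd\neg\neg A$ by \textsf{J1}, then \textsf{J2}), and $u\prec_\emptyset v$ with $S'=\emptyset$ yields $\neg A,\square\neg A\in v$; just drop the false start.
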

We will tacitly use properties of the preceding lemma in most of our proofs.

The following two lemmas can be used to construct (or in our case, find) a MCS with the required properties.
\begin{lemma}[\cite{Bilkova-Goris-Joosten}, Lemma 3.4]
\label{problemi}
Let $w$ be an \il{X}-MCS, and let $\neg (B\rhd C)\in w.$ Then there is an \il{X}-MCS $u$ 
such that $w \prec_{\{\neg C\}} u$ and $B, \square \neg B\in u.$
\end{lemma}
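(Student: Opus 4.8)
The plan is to obtain $u$ as a maximal consistent extension of
\[
  \Gamma \;=\; \{B,\ \square\neg B\}\ \cup\ w_{\{\neg C\}}^\boxdot ,
\]
(with $w_{\{\neg C\}}^\boxdot$ as in Definition~\ref{def-smart-label-sets}), so that the whole lemma reduces to showing that $\Gamma$ is \il{X}-consistent. Before that I would rewrite $w_{\{\neg C\}}^\boxdot$ in a usable form. The only finite subsets of $\{\neg C\}$ are $\emptyset$ and $\{\neg C\}$, with $\bigvee_{G\in\emptyset}\neg G=\bot$ and $\bigvee_{G\in\{\neg C\}}\neg G=\neg\neg C$. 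Since $\vdash\neg\neg C\leftrightarrow C$, axioms \textsf{J1} and \textsf{J2} give $A\rhd\neg\neg C\in w$ iff $A\rhd C\in w$; and since $\vdash\bot\rhd C$ (from $\vdash\bot\to C$, necessitation and \textsf{J1}), \textsf{J2} gives that $A\rhd\bot\in w$ implies $A\rhd C\in w$. Hence
\[
  w_{\{\neg C\}}^\boxdot \;=\; \{\,\neg A,\ \square\neg A \ :\ A\rhd C\in w\,\},
\]
and with this description the three requirements on $u$ follow at once from $\Gamma\subseteq u$: we have $B,\square\neg B\in\Gamma$, and $w_{\{\neg C\}}^\boxdot\subseteq\Gamma$ gives $w\prec_{\{\neg C\}}u$ via the characterization $w\prec_S u\iff w_S^\boxdot\subseteq u$.

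For consistency of $\Gamma$ I would first use that $\{A:A\rhd C\in w\}$ is closed under finite disjunctions (by \textsf{J3}). Consequently any finite inconsistent subset of $\Gamma$ is subsumed by one of the form $\{B,\square\neg B,\neg A,\square\neg A\}$ with $A\rhd C\in w$: replace $A_1,\dots,A_n$ by $A=A_1\vee\cdots\vee A_n$, using $\vdash\neg A\leftrightarrow\bigwedge_i\neg A_i$ and $\vdash\square\neg A\to\bigwedge_i\square\neg A_i$. So it suffices to show that $\{B,\square\neg B,\neg A,\square\neg A\}$ is consistent for each single such $A$. Assuming it is not, we get $\vdash(B\wedge\square\neg B)\to(A\vee\Diamond A)$, and from this I would derive $B\rhd C\in w$, contradicting $\neg(B\rhd C)\in w$.

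The derivation is the ``L\"ob trick''. From axiom \textsf{L} one obtains $\vdash\Diamond B\to\Diamond(B\wedge\square\neg B)$; applying $\Diamond$-monotonicity to the assumed implication gives $\vdash\Diamond(B\wedge\square\neg B)\to\Diamond(A\vee\Diamond A)$, and since $\vdash\Diamond\Diamond A\to\Diamond A$ (transitivity, provable in \textbf{GL}) this chains to $\vdash\Diamond B\to\Diamond A$. Together with the propositional validity $\vdash B\to(B\wedge\square\neg B)\vee\Diamond B$ and the assumed implication, we obtain $\vdash B\to(A\vee\Diamond A)$; hence $\vdash\square(B\to A\vee\Diamond A)$ and, by \textsf{J1}, $B\rhd(A\vee\Diamond A)\in w$. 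On the other hand $\Diamond A\rhd A\in w$ by \textsf{J5}, so $\Diamond A\rhd C\in w$ by \textsf{J2} (using $A\rhd C\in w$), whence $A\vee\Diamond A\rhd C\in w$ by \textsf{J3}; one more application of \textsf{J2} yields $B\rhd C\in w$, the desired contradiction. Thus $\Gamma$ is consistent, and any maximal consistent extension of it is the required $u$.

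The only genuinely non-routine step is the L\"ob manipulation turning $\vdash(B\wedge\square\neg B)\to(A\vee\Diamond A)$ into $\vdash B\to(A\vee\Diamond A)$ --- semantically, the observation that a $\Diamond B$-witness may be chosen $R$-maximal among worlds satisfying $B$. Everything else is bookkeeping with \textsf{J1}--\textsf{J5}, and since only axioms of \il{} are invoked, the argument is uniform in the extension \il{X}.
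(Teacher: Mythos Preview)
Your argument is correct. The paper does not actually prove this lemma; it merely cites it as Lemma~3.4 of \cite{Bilkova-Goris-Joosten}, so there is nothing to compare your approach against here. Your proof is the standard one: reduce to a single $A$ via \textsf{J3}, then use the L\"ob trick to pass from $\vdash (B\wedge\square\neg B)\to(A\vee\Diamond A)$ to $\vdash B\to(A\vee\Diamond A)$, and finish with \textsf{J1}, \textsf{J5}, \textsf{J2}, \textsf{J3}. The rewriting of $w_{\{\neg C\}}^\boxdot$ and the reduction to a single $A$ are both handled cleanly.
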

\begin{lemma}[\cite{Bilkova-Goris-Joosten}, Lemma 3.5]
\label{nedostaci}
Let $w$ and $u$ be some \il{X}-MCS's such that $B\rhd C\in w,$ $w\prec_S u$ and $B\in u.$
Then there is an \il{X}-MCS $v$ such that $w\prec_S v$ and $C,\square\neg C\in v.$
\end{lemma}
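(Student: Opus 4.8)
The plan is to produce $v$ by the standard Lindenbaum route. Set
$$\Gamma \;=\; w_S^\boxdot \cup \{\,C,\ \square\neg C\,\},$$
and show that $\Gamma$ is \il{X}-consistent; then let $v$ be any \il{X}-MCS extending $\Gamma$. Since $w_S^\boxdot \subseteq v$, the remark following Definition \ref{def-smart-label-sets} gives $w \prec_S v$, and $C,\square\neg C \in v$ by construction, so any such $v$ witnesses the lemma. Thus everything reduces to the consistency of $\Gamma$.

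To prove consistency, I would argue by contradiction. A finite inconsistent subset of $\Gamma$ mentions only finitely many elements of $w_S^\boxdot$, say those coming from $A_1 \rhd \bigvee_{G\in S_1'}\neg G,\dots,A_n \rhd \bigvee_{G\in S_n'}\neg G \in w$ with the $S_i'$ finite subsets of $S$. Using \textsf{J3} (to take disjunctions on the left) together with \textsf{J1}, \textsf{J2} (to enlarge the right-hand sides to a common $\bigvee_{G\in S'}\neg G$ with $S' = \bigcup_i S_i' \subseteq S$ finite), and the provable equivalences $\square\neg(A_1\vee A_2) \leftrightarrow \square\neg A_1 \wedge \square\neg A_2$, I can collapse this to a single formula $A$ with $A \rhd D \in w$, where $D := \bigvee_{G\in S'}\neg G$, such that $\{\neg A,\square\neg A,C,\square\neg C\}$ is inconsistent; allowing $A = \bot$, $S' = \emptyset$ also covers the degenerate case in which no element of $w_S^\boxdot$ is used. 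Inconsistency then says $\vdash (C\wedge\square\neg C) \to (A\vee\Diamond A)$.

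From here I would chain boxed $\rhd$-statements to reach $B \rhd D \in w$, which is the desired contradiction: since $B \in u$ and $w\prec_S u$, having $B\rhd\bigvee_{G\in S'}\neg G\in w$ forces $\neg B \in u$. Concretely: necessitation and \textsf{J1} turn the implication above into $(C\wedge\square\neg C) \rhd (A\vee\Diamond A) \in w$; from $A\rhd D\in w$, \textsf{J5} and \textsf{J2}/\textsf{J3} give $(A\vee\Diamond A)\rhd D\in w$, hence $(C\wedge\square\neg C)\rhd D\in w$ by \textsf{J2}; the Gödel–Löb theorem $\vdash \Diamond C \to \Diamond(C\wedge\square\neg C)$, obtained by applying axiom \textsf{L} to $\neg C$, combined with \textsf{J5} and \textsf{J2}, yields $\Diamond C \rhd (C\wedge\square\neg C) \in w$ and therefore $\Diamond C \rhd D \in w$; and the propositionally trivial $\vdash C \to (C\wedge\square\neg C)\vee\Diamond C$ gives, via \textsf{J1}, that $C \rhd (C\wedge\square\neg C)\vee\Diamond C \in w$. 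Now \textsf{J3} and \textsf{J2} assemble these into $C\rhd D\in w$, and \textsf{J2} with $B\rhd C\in w$ delivers $B\rhd D\in w$, the contradiction.

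The genuinely non-bookkeeping step — and the one I expect to be the main obstacle in writing it cleanly — is the use of the Löb-style principle $\Diamond C \to \Diamond(C\wedge\square\neg C)$ to bridge from $C$ (which can be thrown into $v$ at no cost) to the conjunct $C\wedge\square\neg C$ that the $\rhd$-axioms actually know how to manipulate; once that is in hand, the remaining \textsf{J2}/\textsf{J3}/\textsf{J5} manipulations are routine, and the only care needed is in the initial reduction of an arbitrary finite subset of $w_S^\boxdot$ to a single labelled formula.
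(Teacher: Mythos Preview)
Your proof is correct. The paper does not supply its own proof of this lemma; it merely quotes it from \cite{Bilkova-Goris-Joosten} (Lemma~3.5), so there is nothing in the present paper to compare against.

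One stylistic remark: the chain you build in the second half is exactly a derivation of the \il{}-theorem $C \rhd (C \wedge \square\neg C)$, after which $B \rhd C \in w$ and \textsf{J2} immediately give $B \rhd (C \wedge \square\neg C) \in w$. Isolating that theorem up front would let you run the contradiction more directly: if $\{\neg A,\square\neg A,C,\square\neg C\}$ is inconsistent then $\vdash (C\wedge\square\neg C)\rhd(A\vee\Diamond A)$ by \textsf{J1}, hence $(C\wedge\square\neg C)\rhd D$ and thus $B\rhd D\in w$ in two steps of \textsf{J2}. This is the same argument, just packaged so that the L\"ob step is visibly the single nontrivial ingredient, which matches your own diagnosis of where the content lies.
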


Let $B$ be a formula, and $w$ a world in a generalized Veltman model.
Put $[B]_w = \{ u : wRu$ and $u\Vdash B\}$.

In the remainder of the current paper, we will assume that $\mathcal{D}$ is always a finite set of formulas, closed under taking subformulas and single negations, and $\top\in \mathcal{D}$.
The following definition is central to most of the results of this paper.

\begin{definition}
Let \textsf{X} be a  subset of $\{$\textsf{M}, \textsf{M\textsubscript{0}}, \textsf{P}, \textsf{P\textsubscript{0}}, \textsf{R}$\}$.
We say that $\mathfrak{M} = (W, R, \{S_w : w \in W\}, \Vdash)$ is the \il{X}-structure for 
a set of formulas $\mathcal{D}$ if:
\begin{align*}
           W &= \{ w : w \text{ is an \il{X}-MCS and for some } G \in \mathcal{D}, \ G 
           \wedge \square \neg G \in w \};\\
            wRu &\Leftrightarrow w \prec u;\\  
            uS_w V &\Leftrightarrow wRu, \ V \subseteq R[w], (\forall S)(w \prec_S u \Rightarrow 
            (\exists v \in V) w \prec_S v );\\
            w\Vdash p&\Leftrightarrow p\in w.
    \end{align*}
\label{ilx-struktura}    
\end{definition}

\begin{lemma}
If $\il{X} \nvdash \neg A$ then there is an \il{X}-MCS $w$ such that $A\wedge \square\neg A\in w.$
\label{lema-nepraznost}
\end{lemma}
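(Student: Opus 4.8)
The plan is to use the standard "take $\Diamond$-formula seriously, then extend to a maximal consistent set" argument adapted to the interpretability setting, but to route it through Lemma~\ref{problemi}, which is the tool already available for producing an MCS with a $\square\neg(\cdot)$ witness. First I would note that $\square$ is definable as $\neg(\cdot)\rhd\bot$, so $\neg A \wedge \square\neg A$ is literally $\neg A \wedge (\neg(\neg A)\rhd\bot)$, i.e.\ $\neg A \wedge (A\rhd\bot)$; hence asking for $A\wedge\square\neg A$ in some world is, after renaming, the same shape as the conclusion of Lemma~\ref{problemi} with $B=A$. So the real content is to manufacture a hypothesis of the form $\neg(B\rhd C)\in w$ for some MCS $w$.

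Concretely, I would argue as follows. Assume $\il{X}\nvdash \neg A$. First, I claim $\il{X}\nvdash \neg(A \wedge \square\neg A)$, equivalently $\il{X}\nvdash A \rightarrow \Diamond A$ (since $\neg(A\wedge\square\neg A)$ is $A\to\neg\square\neg A = A \to \Diamond A$). This is exactly a Löb-style fact: if $\il{X}\vdash A\to\Diamond A$, i.e.\ $\il{X}\vdash \square\neg A \to \neg A$, then by the rule of necessitation and axiom~L (Löb's axiom, available since $\il{X}\supseteq\il{}$) we get $\il{X}\vdash \square\neg A$, hence $\il{X}\vdash \neg\square\neg A \to \neg\square\neg A$... more carefully: from $\il{X}\vdash \square\neg A\to\neg A$ we get $\il{X}\vdash \square(\square\neg A\to\neg A)$ by necessitation, and then $\il{X}\vdash \square\neg A$ by L; combined with $\square\neg A\to\neg A$ this gives $\il{X}\vdash\neg A$, contradicting the hypothesis. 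So $\il{X}\nvdash\neg(A\wedge\square\neg A)$, which means $\{A\wedge\square\neg A\}$ is $\il{X}$-consistent, and by Lindenbaum's lemma it extends to an $\il{X}$-MCS $w$ with $A\wedge\square\neg A\in w$.

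The main (and essentially only) obstacle is the Löb-style step: one must be careful that $\square\neg A$ really does mean $\neg(\neg\neg A)\rhd\bot = \neg A\rhd\bot$ under the abbreviation conventions in the paper, and that all the propositional manipulations ($\square\neg A\to\neg A$ versus $A\to\Diamond A$, double-negation) go through in the "new language" in which classical tautologies are postulated — this is routine but needs the definable-$\square$ remark made in the introduction. Everything else (consistency $\Rightarrow$ extension to an MCS via Lindenbaum) is completely standard. An alternative, if one prefers to cite rather than reprove the Löb step, is to quote Lemma~\ref{problemi} directly: it is easy to produce \emph{some} MCS $w_0$ with $\neg(A\rhd\bot)\in w_0$ whenever $\il{X}\nvdash A\rhd\bot$ — but in general $\il{X}\nvdash A\rhd\bot$ need not follow from $\il{X}\nvdash\neg A$, so this shortcut does not quite work and the Löb argument above seems to be the cleanest route; I would therefore present the direct argument.
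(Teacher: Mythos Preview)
Your proof is correct and follows essentially the same route as the paper: both arguments show that $\{A\wedge\square\neg A\}$ is \il{X}-consistent by assuming $\vdash \square\neg A\to\neg A$, applying necessitation and L\"ob's axiom to derive $\vdash\square\neg A$, and then concluding $\vdash\neg A$, a contradiction; Lindenbaum then yields the desired MCS. Your detour through Lemma~\ref{problemi} is unnecessary (as you yourself note), and there is a small typo in your first paragraph (you write $\neg A\wedge\square\neg A$ where you mean $A\wedge\square\neg A$), but the core argument matches the paper's.
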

\begin{proof}
 We are to show that $\{ A\wedge \square\neg A\}$ is an \il{X}-consistent set.   
 Suppose $A, \square\neg A\vdash \bot.$ 
 It follows that $\vdash \square\neg A\rightarrow \neg A.$ 
 Applying generalization (necessitation) gives $\vdash \square(\square\neg A\rightarrow \neg A ).$
 The L\"{o}b axiom implies $\vdash\square\neg A.$
 Now,  $\vdash\square\neg A$ and $A,\square\neg A\vdash\bot$ imply $A\vdash\bot,$
 i.e.\ $\vdash \neg A,$ a contradiction.
\end{proof}

\begin{lemma}
Let \textsf{X} be a subset of $\{$\textsf{M}, \textsf{M\textsubscript{0}}, \textsf{P}, 
\textsf{P\textsubscript{0}}, \textsf{R}$\}$.
The \il{X}-structure $\mathfrak{M}$ for a set formula $\mathcal{D}$ is a generalized Veltman model.
Furthermore, the following holds:
$$ \mathfrak{M},w\Vdash G \ \mbox{ if and only if }\ G\in w,$$
for all $G\in\mathcal{D}$ and $w\in W.$ 
\label{lemma-main}
\end{lemma}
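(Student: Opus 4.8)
The plan is to prove Lemma~\ref{lemma-main} in two parts: first that the \il{X}-structure $\mathfrak{M}$ genuinely satisfies the defining conditions (a)--(e) of a generalized Veltman frame, together with the extra characteristic properties \kgen{X} for each $\mathsf{X}$ in the chosen subset; and second the truth lemma, that $\mathfrak{M}, w \Vdash G$ iff $G \in w$ for all $G \in \mathcal{D}$. For the frame conditions I would argue as follows. Converse well-foundedness of $R$ follows because $w \prec u$ forces $\square\neg G \in u$ whenever $G \wedge \square\neg G \in w$ for the witnessing $G$, so along an $R$-chain the witnessing formulas (and their $\square$-depth, controlled by $\mathcal{D}$ being finite) must strictly decrease; more carefully, $w \prec u$ gives $w^{\boxdot}_\emptyset \subseteq u$, hence every $\square A \in w$ satisfies $\square A, A \in u$, and the ``new'' box $\square\neg G$ cannot already be in $w$. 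Transitivity of $R$ is immediate from Lemma~\ref{lema3.2}(b) with $S=\emptyset$. Conditions (a), (d), (e) on $S_w$ are essentially built into the definition $uS_wV \Leftrightarrow wRu,\ V\subseteq R[w],\ (\forall S)(w\prec_S u \Rightarrow (\exists v\in V)\, w\prec_S v)$: monotonicity (e) is trivial since enlarging $V$ only makes the existential easier; (d) follows because $wRuRv$ and $w\prec_S u$ give $w\prec_S v$ by Lemma~\ref{lema3.2}(b); quasi-reflexivity (b) is the case $V=\{u\}$. Quasi-transitivity (c) needs a small argument: if $uS_wV$ and $v S_w Z_v$ for each $v\in V$, and $w\prec_S u$, pick $v\in V$ with $w\prec_S v$, then pick $z\in Z_v$ with $w\prec_S z$, and $z\in\bigcup_v Z_v$.

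For the characteristic properties \kgen{X}, I would handle each principle separately, using the generalized frame conditions from the literature (e.g. \cite{Vukovic99}, \cite{Goris-Joosten-11}) and verifying them directly from the definition of $S_w$ via $\prec_S$. The key technical tool throughout is the pair of Lemmas~\ref{problemi} and~\ref{nedostaci}: the first produces a $\prec_{\{\neg C\}}$-successor forcing $B, \square\neg B$ whenever $\neg(B\rhd C)\in w$, and the second produces, from $B\rhd C\in w$, $w\prec_S u$, $B\in u$, a $v$ with $w\prec_S v$ and $C,\square\neg C\in v$. Note that the $v$ produced by Lemma~\ref{nedostaci} lies in $W$ (it witnesses $C\wedge\square\neg C$, or if $C$ is not itself in $\mathcal{D}$ one uses that $\top\in\mathcal{D}$ and $\top\wedge\square\neg\top$... actually one must be slightly careful here and use $C,\square\neg C\in v$ directly together with the requirement $G\wedge\square\neg G$ for some $G\in\mathcal D$) — this membership-in-$W$ bookkeeping is where I expect the fiddly details to live. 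For each $\mathsf{X}$, the pattern is: take an arbitrary configuration of worlds related by $R$ and $S_w$, translate it into statements about $\prec_S$ relations and membership of $\rhd$-formulas in the MCS's, invoke the corresponding axiom schema of \il{X}, and read off the required $S_w$-relation.

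The truth lemma is proved by induction on the complexity of $G\in\mathcal{D}$. Boolean cases and the base case are routine (the latter by definition of $\Vdash$ on propositional variables). The only real case is $G = B\rhd C$. For the $(\Leftarrow)$ direction: suppose $B\rhd C\in w$ and $wRu$ with $u\Vdash B$, i.e.\ by IH $B\in u$; since $wRu$ means $w\prec u = w\prec_\emptyset u$, we want a $V$ with $uS_wV$ and $V\Vdash C$. The natural candidate is $V = [C]_w \cap \{v : \dots\}$; more precisely, for each $S$ with $w\prec_S u$ we apply Lemma~\ref{nedostaci} (with that $S$) to get $v_S$ with $w\prec_S v_S$ and $C,\square\neg C\in v_S$, hence by IH $v_S\Vdash C$; collecting $V=\{v_S : w\prec_S u\}$ gives $uS_wV$ and $V\Vdash C$. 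One must check $V\subseteq R[w]$: each $v_S$ satisfies $w\prec_S v_S$ hence $w\prec_\emptyset v_S$ by Lemma~\ref{lema3.2}(a), i.e.\ $wRv_S$, and $v_S\in W$ since $C\wedge\square\neg C\in v_S$ (or reduce to a $\mathcal D$-member as above). For the $(\Rightarrow)$ direction: suppose $B\rhd C\notin w$, so $\neg(B\rhd C)\in w$; apply Lemma~\ref{problemi} to get $u$ with $w\prec_{\{\neg C\}} u$ and $B,\square\neg B\in u$. Then $wRu$ and by IH $u\Vdash B$ (note $B\in\mathcal D$ as $B\rhd C$ is, and $\mathcal D$ is subformula-closed). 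It remains to show no $V$ works: if $uS_wV$ then from $w\prec_{\{\neg C\}} u$ we get some $v\in V$ with $w\prec_{\{\neg C\}} v$, whence by Lemma~\ref{lema3.2}(c), $\neg C\in v$, so $C\notin v$ and by IH $v\nVdash C$, so $V\nVdash C$; hence $w\nVdash B\rhd C$.

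I expect the main obstacle to be the verification of the characteristic properties \kgen{X} — particularly the more involved principles \textsf{P\textsubscript{0}} and \textsf{R} — rather than the truth lemma, which is by now standard once the right $S_w$ is in place. The delicate point in those verifications is always to extract, from a given $S_w$-configuration, exactly the right instance of $w\prec_S u$ (choosing $S$ cleverly, often $S$ related to $[C]_w$ or to the set of $C$-worlds) so that the relevant axiom can be applied, and then to confirm that the witness it yields can be packaged back into an $S_w$-statement with target set inside $R[w]\cap W$. A secondary but pervasive nuisance is the need to keep all constructed MCS's inside $W$, i.e.\ to ensure each carries some $G\wedge\square\neg G$ with $G\in\mathcal D$; this is handled uniformly by the fact that Lemmas~\ref{problemi} and~\ref{nedostaci} always deliver a formula of the form $\square\neg(\cdot)$ in the new world, together with a subformula of that new world lying in $\mathcal D$, but it must be checked each time.
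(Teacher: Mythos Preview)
Your proposal is largely on track for what the lemma actually asserts, but you have over-scoped it. Lemma~\ref{lemma-main} claims only two things: that $\mathfrak{M}$ is a generalized Veltman model (i.e.\ conditions (a)--(e) hold, plus $W\neq\emptyset$ and $R$ is transitive and converse well-founded), and the truth lemma for formulas in $\mathcal{D}$. It does \emph{not} claim that $\mathfrak{M}$ possesses the characteristic property \kgen{X}. In the paper those verifications are carried out separately, one principle at a time, in the subsections following Theorem~\ref{glavni} (using the labelling lemmas specific to each principle, e.g.\ Lemma~\ref{lema-ILM} for \textsf{M}, Lemma~\ref{lema-ILR} for \textsf{R}). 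So the paragraph of your plan devoted to ``handle each principle separately, using the generalized frame conditions \ldots'' should be deleted entirely; it is not part of this lemma, and indeed is the substantive work of the rest of the paper.

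With that removed, your argument matches the paper's closely. Two minor remarks. First, in the $(\Leftarrow)$ direction of the $\rhd$-case you build $V=\{v_S : w\prec_S u\}$; the paper instead takes the single fixed set $V=[C]_w$ and shows $uS_w[C]_w$ by the same application of Lemma~\ref{nedostaci}. Both work (yours is a subset of theirs), but the paper's choice is cleaner and avoids indexing over all $S$. Second, your worry about membership in $W$ is misplaced: since $B\rhd C\in\mathcal{D}$ and $\mathcal{D}$ is closed under subformulas, $C\in\mathcal{D}$ automatically, so $C\wedge\square\neg C\in v$ gives $v\in W$ directly. Your parenthetical about falling back on $\top\wedge\square\neg\top$ is both unnecessary and unsound (that conjunction says $\square\bot$, which does not help witness membership in $W$ in the way you suggest).
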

\begin{proof}
Let us verify that the \il{X}-structure 
$\mathfrak{M} = (W, R, \{ S_w : w \in W \}, \Vdash )$ 
for $\mathcal{D}$ is a generalized Veltman model. 
Since $\il{X} \nvdash \bot$ and $\top\in\mathcal{D}$, Lemma \ref{lema-nepraznost} implies 
$W\neq \emptyset$ .

Transitivity of $R$ is immediate.
To see converse well-foundedness, assume there are more than $|\mathcal{D}|$ worlds in an 
$R$-chain. 
Then there are $x$ and $y$ with $xRy$ and for some $G \in \mathcal{D}$, 
$G, \square \neg G \in x, y$. 
However, $\square \neg G \in x$ and $G \in y$ obviously contradict the assumption that $xRy$ 
($x \prec y$).

Next, let us prove properties of $S_w$ for $w \in W$. Clearly $S_w \subseteq R[w] \times 
\mathcal{P}(R[w])$. If $xS_w V$, then $w \prec_\emptyset x$ implies there is at least one element $v$ in 
$V$ (with $w \prec_\emptyset v$). 
Quasi-reflexivity and monotonicity are obvious. Next, assume $wRxRu$ and $w \prec_S x$. 
Lemma \ref{lema3.2} and $w \prec_S x \prec u$ imply $w \prec_S u.$ Thus $x S_w \{u\}$. 
It remains to prove quasi-transitivity. Assume $x S_w V$ and $v S_w U_v$ for all $v \in V$. 
Put $U = \bigcup_v U_v$. We claim that $x S_w U$.  We have $U \subseteq R[w]$. 
Assume $w \prec_S x$. This and $xS_w V$ imply there is $v \in V$ such that $w \prec_S v$. 
This and $v S_w U_v$ imply there is $u \in U_v$ (thus also $u \in U$) such that $w \prec_S u$.

\vskip 2ex
Let us prove the truth lemma with respect to formulas contained in $\mathcal{D}$.
The claim is proved by induction on the complexity of $G\in\mathcal{D}.$
We will only consider the case $G=B\rhd C.$

Assume $B \rhd C \in w,$ \ $wR u$ and $u\Vdash B.$ 
Induction hypothesis implies $B\in u.$
We claim that $u S_w [C]_w$. Clearly $[C]_w \subseteq R[w]$. 
Assume $w \prec_S u$. Lemma \ref{nedostaci} implies there is an \il{X}-MCS $v$ with $w \prec_S v$
and $C, \square \neg C\in v$ (thus also $wRv$ and $v \in W$).
Induction hypothesis implies ${\mathfrak M},v\Vdash C$.

\vskip 2ex
To prove the converse, assume $B \rhd C \notin w$. Lemma \ref{problemi} implies there is $u$ with 
$w \prec_{\{\neg C\}} u$ and  $B, \square \neg B\in u$ (thus $u \in W$). 
It is immediate that $wRu$ and the induction hypothesis implies that $u \Vdash B$.
Assume $u S_w V.$ We are to show that $V \nVdash C$.
Since $w\prec_{\{ \neg C\}} u$ and $uS_w V$,  there is $v \in V$ 
such that $w \prec_{\{\neg C\}} v$. Lemma \ref{lema3.2} implies 
$\neg C\in v.$ 
The induction hypothesis implies $v \nVdash C$; thus $V \nVdash C$.
\end{proof}

\begin{theorem}
\label{glavni} 
Let $X \subseteq \{$\textsf{M}, \textsf{M\textsubscript{0}}, \textsf{P}, 
\textsf{P\textsubscript{0}}, \textsf{R}$\}$.
Assume that for every set $\mathcal{D}$ the \il{X}-structure for $\mathcal{D}$ possesses 
the property \kgen{X}.
Then \il{X} is complete w.r.t.\ \ilgen{X}-models.
\end{theorem}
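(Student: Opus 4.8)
The plan is to argue by contraposition: assuming $\il{X} \nvdash A$, I construct an $\ilgen{X}$-model that refutes $A$ at some world, which shows $A$ is not valid over all $\ilgen{X}$-frames. First I would fix $\mathcal{D}$ to be the (finite) closure of $\{A, \top\}$ under subformulas and single negations, so that $\mathcal{D}$ meets the standing assumptions on $\mathcal{D}$ and $A, \neg A \in \mathcal{D}$. Let $\mathfrak{M} = (W, R, \{S_w : w \in W\}, \Vdash)$ be the $\il{X}$-structure for $\mathcal{D}$ from Definition \ref{ilx-struktura}. By Lemma \ref{lemma-main}, $\mathfrak{M}$ is a generalized Veltman model and the truth lemma $\mathfrak{M}, v \Vdash G \Leftrightarrow G \in v$ holds for all $G \in \mathcal{D}$ and $v \in W$. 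By the hypothesis of the theorem, $\mathfrak{M}$ possesses the property $\kgen{X}$, hence $\mathfrak{M}$ is an $\ilgen{X}$-model, that is, it is based on an $\ilgen{X}$-frame.

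The key remaining step is to locate a world of $\mathfrak{M}$ that contains $\neg A$. Since $\il{X} \nvdash A$ and $\neg\neg A$ is classically (hence in $\il{X}$) equivalent to $A$, we have $\il{X} \nvdash \neg(\neg A)$. Lemma \ref{lema-nepraznost} (whose proof invokes the Löb axiom) then yields an $\il{X}$-MCS $w$ with $\neg A \wedge \square\neg\neg A \in w$. Taking $G := \neg A \in \mathcal{D}$, the defining condition ``$G \wedge \square\neg G \in w$ for some $G \in \mathcal{D}$'' of $W$ is witnessed, so $w \in W$.

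Finally, from $\neg A \in w$ and consistency of $w$ we get $A \notin w$, and since $A \in \mathcal{D}$ the truth lemma gives $\mathfrak{M}, w \nVdash A$. Thus $A$ is falsified in the $\ilgen{X}$-model $\mathfrak{M}$, so the underlying frame of $\mathfrak{M}$ is an $\ilgen{X}$-frame on which $A$ is not valid. This is exactly the contrapositive of completeness, so $\il{X}$ is complete w.r.t.\ $\ilgen{X}$-models.

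I do not anticipate a serious obstacle: essentially all the work has been delegated to Lemma \ref{lemma-main} (the verification of the generalized frame conditions together with the truth lemma) and to the hypothesis that the $\il{X}$-structure has property $\kgen{X}$. The one point that genuinely requires care is checking that the refuting world actually belongs to $W$ — i.e.\ that it satisfies the ``seed'' condition $G \wedge \square\neg G \in w$ — which is precisely where Lemma \ref{lema-nepraznost}, and through it the Löb axiom, is needed.
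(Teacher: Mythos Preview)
Your proposal is correct and follows essentially the same route as the paper's proof: both apply Lemma~\ref{lema-nepraznost} to land a refuting MCS inside $W$, invoke Lemma~\ref{lemma-main} for the truth lemma, and use the hypothesis to ensure the frame condition $\kgen{X}$. The only cosmetic difference is that the paper phrases the contrapositive as ``$\nvdash \neg A$ implies $A$ is satisfied'' (so its $A$ is your $\neg A$), whereas you phrase it as ``$\nvdash A$ implies $A$ is refuted''; the arguments are interchangeable.
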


\begin{proof}
Let $A$ be a formula such that $\nvdash \neg A$. Lemma \ref{lema-nepraznost} implies there is 
an \il{X}-MCS $w$ such that $A \wedge \square \neg A \in w.$ 
Let $\mathcal D$ have the usual properties, and contain $A$.
Let ${\mathfrak M}=(W, R, \{S_w : w \in W\}, \Vdash)$ be the \il{X}-structure for 
$\mathcal{D}$. 
Since $A \wedge \square \neg A \in w$  and $A \in \mathcal{D}$, we have $w \in W$.
Lemma \ref{lemma-main} implies $\mathfrak{M},w\nVdash \neg A.$ 
\end{proof}

\begin{corollary}
The logic \il{} is complete w.r.t.\ generalized Veltman models.
\end{corollary}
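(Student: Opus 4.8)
The plan is to obtain this as the special case $X = \emptyset$ of Theorem \ref{glavni}. First I would note that $\il{X}$ for the empty parameter $X$ is, by definition, just $\il{}$ itself, since no additional axiom schemata are imposed. Next I would observe that the characteristic property $\kgen{X}$ for $X = \emptyset$ can be taken to be a sentence valid on every generalized Veltman frame (say $\top$), so that the characteristic class of generalized frames for $\il{}$ is the class of \emph{all} generalized Veltman frames, and the corresponding $\ilgen{X}$-models for $X=\emptyset$ are precisely the generalized Veltman models.

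With this identification in place, the hypothesis of Theorem \ref{glavni} is satisfied vacuously: for every admissible $\mathcal{D}$, Lemma \ref{lemma-main} already guarantees that the $\il{}$-structure for $\mathcal{D}$ is a generalized Veltman model, and every generalized Veltman model trivially possesses the property $\top$. Theorem \ref{glavni} then yields that $\il{}$ is complete with respect to $\ilgen{X}$-models for $X = \emptyset$, that is, with respect to all generalized Veltman models, which is exactly the assertion of the corollary.

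There is no genuine obstacle in this argument: all the substantive work — the canonical-style construction of Definition \ref{ilx-struktura}, the verification in Lemma \ref{lemma-main} that it is a generalized Veltman model together with the truth lemma, and the packaging of these facts into Theorem \ref{glavni} — has already been carried out. The only point that needs a moment of care is the bookkeeping observation that ``no interpretability principles'' is a legitimate value of the parameter $X$, and that in this degenerate case the target class of models is simply the full class of generalized Veltman models, so that no frame condition remains to be checked.
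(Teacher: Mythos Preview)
Your proposal is correct and matches the paper's intended argument: the corollary is stated immediately after Theorem \ref{glavni} with no separate proof, precisely because it is the instance $X = \emptyset$, for which the frame condition is vacuous and the \ilgen{X}-models are all generalized Veltman models. Your explanation of why the hypothesis of Theorem \ref{glavni} is trivially satisfied in this case is exactly the intended reading.
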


Note that the method presented in \cite{Vukovic08} now implies 
completeness of \il{} w.r.t.\ ordinary Veltman models.
Unfortunately, this method does not preserve characteristic properties in general.

In the following sections we prove (or reprove) the completeness of the following logics w.r.t.\ generalized semantics: \il{M}, \il{M\textsubscript{0}}, \il{P}, \il{P\textsubscript{0}},  \il{R}, \il{W} and \il{W*}.

\subsection{The logic \il{M}}

Completeness of the logic \il{M} w.r.t.\ generalized semantics is an easy consqeuence of the completeness 
of \il{M} w.r.t.\ ordinary semantics, first proved by de Jongh and Veltman \cite{deJongh-Veltman-90}. 
Another proof of the same result was given by Goris and Joosten, using the construction method in \cite{Goris-Joosten-11}.

Verbrugge determined the characteristic property \kgen{M} in 1992.\ in an unpublished paper:
$$ uS_w V \Rightarrow (\exists V' \subseteq V)( uS_w V' \ \& \ R[V'] 
\subseteq R[u]).$$

\begin{lemma}[\cite{Bilkova-Goris-Joosten}, Lemma 3.7]
\label{lema-ILM}
Let $w$ and $u$ be some \il{M}-MCS's, and let $S$ be a set of formulas.  
If $w \prec_S u$ then $w \prec_{S \cup u_\emptyset^\square} u$.
\end{lemma}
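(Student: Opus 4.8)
The plan is to unfold the definitions of $\prec_S$ and $u_\emptyset^\square$ and show that adding the box-formulas of $u$ to the parameter set $S$ does not create any new obligations that $u$ fails to satisfy. Recall $w \prec_{S \cup u_\emptyset^\square} u$ means: for every finite $S'' \subseteq S \cup u_\emptyset^\square$ and every formula $A$, if $A \rhd \bigvee_{G \in S''} \neg G \in w$ then $\neg A, \square \neg A \in u$. So first I would fix such an $S''$ and split it as $S'' = S' \cup T'$ with $S' \subseteq S$ finite and $T' \subseteq u_\emptyset^\square$ finite. Each element of $T'$ has the form $\square \neg H_i$ for some $H_i$ with $H_i \rhd \bot \in u$ (using the simplified description of $u_\emptyset^\square$, or more precisely $H_i \rhd \bigvee_{G \in \emptyset} \neg G \in u$, i.e.\ $H_i \rhd \bot \in u$). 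The negations of these elements are $\neg \square \neg H_i = \Diamond H_i$.

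The key algebraic step is then to rewrite the disjunction. We have $\bigvee_{G \in S''} \neg G = \bigvee_{G \in S'} \neg G \vee \bigvee_i \Diamond H_i$, and the hypothesis is $A \rhd (\bigvee_{G\in S'}\neg G \vee \bigvee_i \Diamond H_i) \in w$. I want to conclude $A \rhd \bigvee_{G \in S'} \neg G \in w$, because then $w \prec_S u$ gives $\neg A, \square \neg A \in u$ directly. To do this I would use the principle \textsf{M} (and the $\rhd$-axioms). The relevant fact is that each $\Diamond H_i$ is "absorbed" because $H_i \rhd \bot \in u$ and $\square \neg H_i \in u$ controls things at $u$; more precisely, using \textsf{M}-style reasoning one shows $w \vdash (A \rhd (D \vee \Diamond H)) \to \dots$ allowing the $\Diamond H$ disjunct to be dropped relative to what matters at $u$. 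Concretely: from $A \rhd (D \vee \Diamond H)$ and the \textsf{M} principle one derives $A \wedge \square \neg H \rhd (D \vee \Diamond H) \wedge \square \neg H$, and since $\square \neg H$ contradicts $\Diamond H$ this simplifies the right side to $D \wedge \square \neg H$, hence $A \wedge \square \neg H \rhd D$; combined with the fact that (via $H \rhd \bot$, which over \textbf{GL} gives $\vdash \square \neg H$... wait, $H \rhd \bot$ means $\Diamond H \rhd \bot$ is not automatic) — here is where I must be careful and instead leverage that $\square\neg H \in u$ together with $w \prec_S u$: actually the cleanest route is to show $A \rhd D \in w$ fails to be needed and instead directly verify $\neg A \in u$ and $\square \neg A \in u$ using $J$-axioms plus \textsf{M} plus $\Diamond H_i \to$ something in $u$.

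The main obstacle, and the step I expect to require the most care, is exactly this manipulation: showing that a $\rhd$-statement in $w$ whose consequent is padded with $\Diamond H$ disjuncts (where the $H$'s are the "dead" formulas $\square\neg H \in u$) still forces $\neg A, \square\neg A \in u$. This is where the principle \textsf{M} is essential — without \textsf{M} the lemma is false — so I expect the proof to invoke \textsf{M} in the form $A \rhd B \to A \wedge \square C \rhd B \wedge \square C$ applied with $C = \neg H_i$, iterated over the finitely many $i$, to push all the $\square\neg H_i$ inside, collapse the $\Diamond H_i$ disjuncts against $\square\neg H_i$, and reduce to an instance of $w \prec_S u$. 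I would structure the proof as: (1) reduce to a single extra box-formula $\square \neg H$ by induction on $|T'|$; (2) for the single step, use \textsf{M} and propositional reasoning under $\rhd$ to eliminate the $\Diamond H$ disjunct; (3) apply the hypothesis $w \prec_S u$ and the fact $\square \neg H \in u$ to close. Throughout I would use Lemma \ref{lema3.2} tacitly and the observation (stated in the excerpt) that $u_\emptyset^\square$ behaves like $\{\square A : \square A \in u\}$.
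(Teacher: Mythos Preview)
The paper does not actually prove this lemma: it is quoted verbatim from \cite{Bilkova-Goris-Joosten} (their Lemma~3.7) and used as a black box in the completeness proof for \il{M}. So there is no ``paper's own proof'' to compare your attempt against.

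That said, your outline is essentially the standard argument and is correct, despite the hesitation in the middle. The clean version of the step you were circling around is: write the finite $S'' = S' \cup \{\square\neg H_1,\dots,\square\neg H_k\}$ with $S'\subseteq S$ and each $H_i\rhd\bot\in u$ (hence $\square\neg H_i\in u$ by J4). From $A\rhd\big(\bigvee_{G\in S'}\neg G\ \vee\ \bigvee_i\Diamond H_i\big)\in w$, apply \textsf{M} with $C=\bigwedge_i\neg H_i$ to get
\[
A\wedge\textstyle\bigwedge_i\square\neg H_i\ \rhd\ \Big(\bigvee_{G\in S'}\neg G\ \vee\ \bigvee_i\Diamond H_i\Big)\wedge\bigwedge_i\square\neg H_i\ \in w,
\]
and since the $\Diamond H_i$ disjuncts are killed by the conjunct $\bigwedge_i\square\neg H_i$, J1+J2 give $A\wedge\bigwedge_i\square\neg H_i\rhd\bigvee_{G\in S'}\neg G\in w$. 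Now $w\prec_S u$ yields $\neg\big(A\wedge\bigwedge_i\square\neg H_i\big)\in u$ and $\square\neg\big(A\wedge\bigwedge_i\square\neg H_i\big)\in u$. Using $\square\neg H_i\in u$ and (via \textbf{GL}'s $\square\!\to\!\square\square$) $\square\square\neg H_i\in u$, these unpack to $\neg A\in u$ and $\square\neg A\in u$, as required. Your plan to do this by induction on $|T'|$ also works; the one-shot version above is just slightly tidier.
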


\begin{theorem}
The logic \il{M} is complete w.r.t.\ \ilgen{M}-models.
\end{theorem}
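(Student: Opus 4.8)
The plan is to apply Theorem~\ref{glavni} with $\textsf{X} = \{\textsf{M}\}$. By that theorem, it suffices to show that for every finite set $\mathcal{D}$ (closed under subformulas and single negations, with $\top \in \mathcal{D}$), the \il{M}-structure $\mathfrak{M} = (W, R, \{S_w : w \in W\}, \Vdash)$ for $\mathcal{D}$ possesses the characteristic property \kgen{M}, namely
$$ uS_w V \Rightarrow (\exists V' \subseteq V)( uS_w V' \ \& \ R[V'] \subseteq R[u]).$$
So I would fix $w \in W$ and assume $uS_w V$, i.e.\ $wRu$, $V \subseteq R[w]$, and $(\forall S)(w \prec_S u \Rightarrow (\exists v \in V)\, w \prec_S v)$; then I need to find a suitable $V' \subseteq V$.

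The natural candidate is $V' = \{ v \in V : u \prec v \}$, i.e.\ the elements of $V$ that are $R$-above $u$; this immediately gives $R[V'] \subseteq R[u]$ by transitivity of $\prec$ (which is the definition of $R$ here), so $R[V'] \subseteq R[u]$ follows from Lemma~\ref{lema3.2}(b). The real work is to check that $uS_w V'$ still holds, i.e.\ that for every $S$ with $w \prec_S u$ there is some $v \in V'$ with $w \prec_S v$. Here is where Lemma~\ref{lema-ILM} enters: given $w \prec_S u$, that lemma upgrades this to $w \prec_{S \cup u_\emptyset^\square} u$. Now apply the hypothesis $uS_w V$ with the \emph{enlarged} parameter set $S \cup u_\emptyset^\square$ in place of $S$: this yields some $v \in V$ with $w \prec_{S \cup u_\emptyset^\square} v$. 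By Lemma~\ref{lema3.2}(a) (monotonicity in the subscript) this $v$ satisfies both $w \prec_S v$ (so $v$ witnesses the $S$-requirement) and $w \prec_{u_\emptyset^\square} v$. The point of the latter is that $w \prec_{u_\emptyset^\square} v$ should force $u \prec v$, i.e.\ $v \in V'$: indeed $u_\emptyset^\square$ contains $\{\square \neg A : A \rhd \bot \in u\}$, which (as noted in the text after Definition~\ref{def-smart-label-sets}) captures the box-content of $u$, so $w \prec_{u_\emptyset^\square} v$ should give $u^\square_\emptyset \subseteq v$, hence $u^{\boxdot}_\emptyset \subseteq v$ once one checks the $\neg A$ parts come for free, hence $u \prec v$ by definition. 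Thus $v \in V'$, completing the verification that $uS_w V'$.

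The step I expect to be the main obstacle is precisely the last implication: showing that $w \prec_{u_\emptyset^\square} v$ (together with $wRu$, $wRv$) really does yield $u \prec v$, i.e.\ that the parameter set $u_\emptyset^\square$ is ``rich enough'' to encode $R$-accessibility from $u$. One must be careful about the asymmetry between $u_\emptyset^\square$ (only $\square\neg A$ formulas) and $u_\emptyset^\boxdot$ (both $\neg A$ and $\square\neg A$), and about whether one needs the full $\prec$ or only that $\square\neg A \in u \Rightarrow \neg A, \square\neg A \in v$ for the relevant $A$'s; the maximal-consistency remark following Definition~\ref{def-smart-label-sets} is what makes this go through, since $\square\neg A \in v$ together with $\square$-introspection-type reasoning in \il{} (via L\"ob) gives $\neg A \in v$ as well. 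Once that implication is secured, everything else is bookkeeping with Lemma~\ref{lema3.2}, and the proof concludes by invoking Theorem~\ref{glavni}.
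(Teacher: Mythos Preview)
Your overall strategy matches the paper's, but your choice of $V'$ introduces a genuine gap. You take $V' = \{v \in V : u \prec v\}$ and then need, for each $S$ with $w \prec_S u$, a witness $v \in V$ with $w \prec_{S \cup u_\emptyset^\square} v$ that actually lies in $V'$, i.e.\ satisfies $u \prec v$. From $w \prec_{u_\emptyset^\square} v$ and Lemma~\ref{lema3.2}(c) you correctly get $u_\emptyset^\square \subseteq v$, so $\square\neg A \in v$ whenever $A \rhd \bot \in u$. But the further step ``hence $u_\emptyset^\boxdot \subseteq v$ once one checks the $\neg A$ parts come for free'' is false: \il{} (like \textbf{GL}) does not prove the reflection schema $\square\neg A \to \neg A$, and L\"ob's axiom does not help here. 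Concretely, it is perfectly consistent for an \il{M}-MCS $v$ to contain both $\square\neg A$ and $A$; so $u_\emptyset^\square \subseteq v$ does \emph{not} imply $\neg A \in v$ for the relevant $A$, and $u \prec v$ need not hold. Your $V'$ may therefore fail to contain the witness you produced, and $uS_w V'$ is not established.

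The fix is exactly what the paper does: take instead $V' = \{v \in V : w \prec_{u_\emptyset^\square} v\}$. Then the witness you construct automatically lies in $V'$, so $uS_w V'$ follows. For $R[V'] \subseteq R[u]$ one does \emph{not} try to prove $u \prec v$; rather, for $v \in V'$ and $vRz$ one argues directly that $u \prec z$: from $u_\emptyset^\square \subseteq v$ every $\square B \in u$ lies in $v$, and then $v \prec z$ pushes both $B$ and $\square B$ into $z$, which is exactly $u \prec z$. The extra $R$-step is what supplies the missing ``$\neg A$ part'' that you cannot get at $v$ itself.
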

\begin{proof}
Given Theorem \ref{glavni}, it suffices to show that for any set $\mathcal{D}$, the \il{M}-structure for $\mathcal D$ possesses the property \kgen{M}. 
Let $(W,R,\{ S_w:w\in W\},\Vdash)$ be the \il{M}-structure for $\mathcal{D}.$

Let $uS_w V$ and take $V' = \{ v\in V : w \prec_{u_\emptyset^\square} v \}$. 
We claim $uS_w V'$ and $R[V']\subseteq R[u].$
Suppose $w \prec_S u$. Lemma \ref{lema-ILM} implies 
$w \prec_{S \cup u_\emptyset^\square} u.$
Since $uS_w V$, we have that there is $v \in V$ with $w \prec_{S \cup u_\emptyset^\square} v.$ 
So, $v \in V'$. Thus, $u S_w V'$. 

Now let $v \in V'$ and $z\in W$ be such that $vRz$. 
Since $v \in V'$, $w \prec_{u_\emptyset^\square} v$. 
Then for all $\square B \in u$ we have $\square B \in v.$ 
Since $vRz$, we have $B, \square B \in z$. 
So, $u \prec z$ i.e.\  $uRz$.
\end{proof}

\subsection{The logic \ilmn{}}

Modal completeness of \ilmn{} w.r.t.\ Veltman semantics was proved in \cite{Goris-Joosten-08} by E.\ Goris and 
J.\ J.\ Joosten. Certain difficulties encountered in this proof were our main motivation for using generalized Veltman semantics. 
We will sketch these difficulties and show in what way does generalized semantics overcome them. 

\vskip 2ex
Characteristic property \kgen{M\textsubscript{0}}  (see \cite{Mikec-Perkov-Vukovic-17}):
$$ wRuRxS_wV\ \Rightarrow \ (\exists V'\subseteq V) (uS_wV' \ \& \ R[V']\subseteq R[u])).$$

\begin{lemma}[\cite{Bilkova-Goris-Joosten}, Lemma 3.9]
\label{label-mn}
Let $w$, $u$ and $x$ be \ilmn{}-MCS's, and $S$ an arbitrary set of formulas. 
If $w \prec_S u \prec x$ then $w \prec_{S \cup u_\emptyset^\square} x$.
\end{lemma}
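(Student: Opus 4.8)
The plan is to verify the defining condition of $w \prec_{S \cup u_\emptyset^\square} x$ directly. Fix a finite $T \subseteq S \cup u_\emptyset^\square$ and a formula $A$ with $A \rhd \bigvee_{G \in T} \neg G \in w$; the goal is to derive $\neg A, \square \neg A \in x$. First I would split $T = T_1 \cup T_2$ with $T_1 \subseteq S$ and $T_2 \subseteq u_\emptyset^\square$. By the definition of $u_\emptyset^\square$, every element of $T_2$ has the form $\square \neg B_i$ with $\square \neg B_i \in u$ (equivalently $B_i \rhd \bot \in u$). Writing $D = \bigvee_{G \in T_1} \neg G$ and $B = \bigvee_i B_i$, the disjunction over $T_2$ becomes $\bigvee_i \neg \square \neg B_i = \bigvee_i \Diamond B_i$, which is provably equivalent to $\Diamond B$; moreover $\square \neg B \in u$, since each $\square \neg B_i \in u$. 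Hence the hypothesis reads $A \rhd (D \vee \Diamond B) \in w$, with the key side information that $\square \neg B \in u$.

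The heart of the argument is a single application of \textsf{M\textsubscript{0}}. Instantiating $A \rhd (D \vee \Diamond B)$ into \textsf{M\textsubscript{0}} with $C := \neg B$ yields $\Diamond A \wedge \square \neg B \rhd (D \vee \Diamond B) \wedge \square \neg B \in w$. Since $\square \neg B \vdash \neg \Diamond B$, we have $(D \vee \Diamond B) \wedge \square \neg B \vdash D$, so by necessitation, J1 and J2 this simplifies to $\Diamond A \wedge \square \neg B \rhd D \in w$. Crucially $D = \bigvee_{G \in T_1} \neg G$ with $T_1 \subseteq S$, so this antecedent--consequent pair has exactly the shape witnessing $w \prec_S u$; applying that relation gives $\neg(\Diamond A \wedge \square \neg B) \in u$, i.e.\ $\square \neg A \vee \Diamond B \in u$. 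Because $\square \neg B \in u$ forces $\Diamond B \notin u$, maximality of $u$ yields $\square \neg A \in u$.

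Finally I would propagate this to $x$. Since $\square \neg A \in u$ is equivalent to $A \rhd \bot \in u$ (via the abbreviation $\square C := \neg C \rhd \bot$), the hypothesis $u \prec x$ (that is, $u \prec_\emptyset x$, taking $S' = \emptyset$ and witness formula $A$) gives immediately $\neg A, \square \neg A \in x$, completing the verification.

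I expect the main obstacle to be locating the right instance of \textsf{M\textsubscript{0}} and seeing why \textsf{M\textsubscript{0}}, rather than the stronger \textsf{M}, is exactly what is needed. The decisive point --- and the reason the conclusion lands at $x$ rather than at $u$ --- is that \textsf{M\textsubscript{0}} produces $\Diamond A$ in the antecedent where \textsf{M} would produce $A$; this extra modal strength is precisely what upgrades the consequence at $u$ from $\neg A$ to $\square \neg A$, and $\square \neg A \in u$ is exactly the premise needed to invoke $u \prec x$. Bundling the finitely many $u_\emptyset^\square$-disjuncts into the single modal formula $\Diamond B$ with $\square \neg B \in u$ is the bookkeeping step that makes the single application of \textsf{M\textsubscript{0}} possible; the remaining manipulations are routine propositional and \il{}-level reasoning.
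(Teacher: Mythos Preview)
Your argument is correct. The paper does not supply its own proof of this lemma; it is quoted from \cite{Bilkova-Goris-Joosten} (Lemma 3.9) and used as a black box, so there is nothing to compare against. Your proof follows what is essentially the intended route: split the finite witness set into its $S$-part and its $u_\emptyset^\square$-part, bundle the latter into a single $\Diamond B$ with $\square\neg B\in u$, apply one instance of \textsf{M\textsubscript{0}} with $C=\neg B$ to obtain $\Diamond A\wedge\square\neg B\rhd D\in w$, use $w\prec_S u$ to get $\square\neg A\in u$, and finally use $u\prec x$ to push $\neg A,\square\neg A$ into $x$. The only cosmetic point is that $\square\neg A$ literally abbreviates $\neg\neg A\rhd\bot$ rather than $A\rhd\bot$, but these are \il{}-equivalent, so the last step is fine.
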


To motivate our proving of completeness (of \ilmn, but also in general) w.r.t.\ generalized semantics, let us sketch a situation for which there are clear benefits in working with generalized semantics. We do this only now because \il{M\textsubscript{0}} is sufficiently complex to display (some of) these benefits.
Suppose we are building models step by step (as in the \textit{construction method} \cite{Goris-Joosten-08}), and worlds $w$, $u_1$, 
$u_2$ and $x$ occur in the configuration displayed in Figure \ref{fig:slika}.
Furthermore, suppose we need to produce an $S_w$-successor $v$ of $x$. 

\begin{figure}[h!]
    \centering

\tikzset{every picture/.style={line width=0.75pt}} 

\begin{tikzpicture}[x=0.75pt,y=0.75pt,yscale=-1,xscale=1]

\draw    (76.5,147.33) -- (126.84,113.45) ;
\draw [shift={(128.5,112.33)}, rotate = 506.06] [color={rgb, 255:red, 0; green, 0; blue, 0 }  ][line width=0.75]    (10.93,-3.29) .. controls (6.95,-1.4) and (3.31,-0.3) .. (0,0) .. controls (3.31,0.3) and (6.95,1.4) .. (10.93,3.29)   ;
\draw [shift={(76.5,147.33)}, rotate = 326.06] [color={rgb, 255:red, 0; green, 0; blue, 0 }  ][fill={rgb, 255:red, 0; green, 0; blue, 0 }  ][line width=0.75]      (0, 0) circle [x radius= 3.35, y radius= 3.35]   ;
\draw    (76.5,147.33) -- (126.81,179.26) ;
\draw [shift={(128.5,180.33)}, rotate = 212.4] [color={rgb, 255:red, 0; green, 0; blue, 0 }  ][line width=0.75]    (10.93,-3.29) .. controls (6.95,-1.4) and (3.31,-0.3) .. (0,0) .. controls (3.31,0.3) and (6.95,1.4) .. (10.93,3.29)   ;

\draw    (128.5,112.33) -- (177.14,147.98) ;
\draw [shift={(178.75,149.17)}, rotate = 216.24] [color={rgb, 255:red, 0; green, 0; blue, 0 }  ][line width=0.75]    (10.93,-3.29) .. controls (6.95,-1.4) and (3.31,-0.3) .. (0,0) .. controls (3.31,0.3) and (6.95,1.4) .. (10.93,3.29)   ;
\draw [shift={(128.5,112.33)}, rotate = 36.24] [color={rgb, 255:red, 0; green, 0; blue, 0 }  ][fill={rgb, 255:red, 0; green, 0; blue, 0 }  ][line width=0.75]      (0, 0) circle [x radius= 3.35, y radius= 3.35]   ;
\draw    (128.5,180.33) -- (177.05,150.22) ;
\draw [shift={(178.75,149.17)}, rotate = 508.19] [color={rgb, 255:red, 0; green, 0; blue, 0 }  ][line width=0.75]    (10.93,-3.29) .. controls (6.95,-1.4) and (3.31,-0.3) .. (0,0) .. controls (3.31,0.3) and (6.95,1.4) .. (10.93,3.29)   ;
\draw [shift={(128.5,180.33)}, rotate = 328.19] [color={rgb, 255:red, 0; green, 0; blue, 0 }  ][fill={rgb, 255:red, 0; green, 0; blue, 0 }  ][line width=0.75]      (0, 0) circle [x radius= 3.35, y radius= 3.35]   ;
\draw  [dash pattern={on 4.5pt off 4.5pt}]  (178.75,149.17) .. controls (218.35,119.47) and (217.52,176.18) .. (256.31,148.21) ;
\draw [shift={(257.5,147.33)}, rotate = 503.13] [color={rgb, 255:red, 0; green, 0; blue, 0 }  ][line width=0.75]    (10.93,-3.29) .. controls (6.95,-1.4) and (3.31,-0.3) .. (0,0) .. controls (3.31,0.3) and (6.95,1.4) .. (10.93,3.29)   ;
\draw [shift={(178.75,149.17)}, rotate = 323.13] [color={rgb, 255:red, 0; green, 0; blue, 0 }  ][fill={rgb, 255:red, 0; green, 0; blue, 0 }  ][line width=0.75]      (0, 0) circle [x radius= 3.35, y radius= 3.35]   ;
\draw    (319.5,147.33) -- (369.84,113.45) ;
\draw [shift={(371.5,112.33)}, rotate = 506.06] [color={rgb, 255:red, 0; green, 0; blue, 0 }  ][line width=0.75]    (10.93,-3.29) .. controls (6.95,-1.4) and (3.31,-0.3) .. (0,0) .. controls (3.31,0.3) and (6.95,1.4) .. (10.93,3.29)   ;
\draw [shift={(319.5,147.33)}, rotate = 326.06] [color={rgb, 255:red, 0; green, 0; blue, 0 }  ][fill={rgb, 255:red, 0; green, 0; blue, 0 }  ][line width=0.75]      (0, 0) circle [x radius= 3.35, y radius= 3.35]   ;
\draw    (319.5,147.33) -- (369.81,179.26) ;
\draw [shift={(371.5,180.33)}, rotate = 212.4] [color={rgb, 255:red, 0; green, 0; blue, 0 }  ][line width=0.75]    (10.93,-3.29) .. controls (6.95,-1.4) and (3.31,-0.3) .. (0,0) .. controls (3.31,0.3) and (6.95,1.4) .. (10.93,3.29)   ;

\draw    (371.5,112.33) -- (420.14,147.98) ;
\draw [shift={(421.75,149.17)}, rotate = 216.24] [color={rgb, 255:red, 0; green, 0; blue, 0 }  ][line width=0.75]    (10.93,-3.29) .. controls (6.95,-1.4) and (3.31,-0.3) .. (0,0) .. controls (3.31,0.3) and (6.95,1.4) .. (10.93,3.29)   ;
\draw [shift={(371.5,112.33)}, rotate = 36.24] [color={rgb, 255:red, 0; green, 0; blue, 0 }  ][fill={rgb, 255:red, 0; green, 0; blue, 0 }  ][line width=0.75]      (0, 0) circle [x radius= 3.35, y radius= 3.35]   ;
\draw    (371.5,180.33) -- (420.05,150.22) ;
\draw [shift={(421.75,149.17)}, rotate = 508.19] [color={rgb, 255:red, 0; green, 0; blue, 0 }  ][line width=0.75]    (10.93,-3.29) .. controls (6.95,-1.4) and (3.31,-0.3) .. (0,0) .. controls (3.31,0.3) and (6.95,1.4) .. (10.93,3.29)   ;
\draw [shift={(371.5,180.33)}, rotate = 328.19] [color={rgb, 255:red, 0; green, 0; blue, 0 }  ][fill={rgb, 255:red, 0; green, 0; blue, 0 }  ][line width=0.75]      (0, 0) circle [x radius= 3.35, y radius= 3.35]   ;
\draw  [dash pattern={on 4.5pt off 4.5pt}]  (421.75,149.17) .. controls (461.35,119.47) and (460.52,176.18) .. (499.31,148.21) ;
\draw [shift={(500.5,147.33)}, rotate = 503.13] [color={rgb, 255:red, 0; green, 0; blue, 0 }  ][line width=0.75]    (10.93,-3.29) .. controls (6.95,-1.4) and (3.31,-0.3) .. (0,0) .. controls (3.31,0.3) and (6.95,1.4) .. (10.93,3.29)   ;
\draw [shift={(421.75,149.17)}, rotate = 323.13] [color={rgb, 255:red, 0; green, 0; blue, 0 }  ][fill={rgb, 255:red, 0; green, 0; blue, 0 }  ][line width=0.75]      (0, 0) circle [x radius= 3.35, y radius= 3.35]   ;
\draw   (500.5,147.33) .. controls (500.51,108.56) and (531.95,77.13) .. (570.73,77.13) .. controls (609.5,77.14) and (640.93,108.58) .. (640.93,147.36) .. controls (640.92,186.14) and (609.48,217.57) .. (570.7,217.56) .. controls (531.92,217.55) and (500.49,186.11) .. (500.5,147.33) -- cycle ;
\draw   (543.92,114.34) .. controls (543.92,100.54) and (555.11,89.34) .. (568.92,89.34) .. controls (582.72,89.34) and (593.92,100.54) .. (593.92,114.34) .. controls (593.92,128.15) and (582.72,139.34) .. (568.92,139.34) .. controls (555.11,139.34) and (543.92,128.15) .. (543.92,114.34) -- cycle ;
\draw   (545.71,180.35) .. controls (545.71,166.54) and (556.91,155.35) .. (570.71,155.35) .. controls (584.52,155.35) and (595.71,166.54) .. (595.71,180.35) .. controls (595.71,194.15) and (584.52,205.35) .. (570.71,205.35) .. controls (556.91,205.35) and (545.71,194.15) .. (545.71,180.35) -- cycle ;
\draw  [dash pattern={on 4.5pt off 4.5pt}]  (371.5,112.33) .. controls (417.27,29.75) and (481.85,167.94) .. (542.99,115.15) ;
\draw [shift={(543.92,114.34)}, rotate = 498.16] [color={rgb, 255:red, 0; green, 0; blue, 0 }  ][line width=0.75]    (10.93,-3.29) .. controls (6.95,-1.4) and (3.31,-0.3) .. (0,0) .. controls (3.31,0.3) and (6.95,1.4) .. (10.93,3.29)   ;
\draw [shift={(371.5,112.33)}, rotate = 299] [color={rgb, 255:red, 0; green, 0; blue, 0 }  ][fill={rgb, 255:red, 0; green, 0; blue, 0 }  ][line width=0.75]      (0, 0) circle [x radius= 3.35, y radius= 3.35]   ;
\draw  [dash pattern={on 4.5pt off 4.5pt}]  (371.5,180.33) .. controls (437.17,253.96) and (463.24,139.49) .. (544.48,179.73) ;
\draw [shift={(545.71,180.35)}, rotate = 207.07] [color={rgb, 255:red, 0; green, 0; blue, 0 }  ][line width=0.75]    (10.93,-3.29) .. controls (6.95,-1.4) and (3.31,-0.3) .. (0,0) .. controls (3.31,0.3) and (6.95,1.4) .. (10.93,3.29)   ;
\draw [shift={(371.5,180.33)}, rotate = 48.27] [color={rgb, 255:red, 0; green, 0; blue, 0 }  ][fill={rgb, 255:red, 0; green, 0; blue, 0 }  ][line width=0.75]      (0, 0) circle [x radius= 3.35, y radius= 3.35]   ;

\draw (257.5,147.33) node   {$\CIRCLE $};
\draw (76,163) node   {$w$};
\draw (113,102) node   {$u_{1}$};
\draw (113.5,187) node   {$u_{2}$};
\draw (125,132.5) node   {$\square B_{1}$};
\draw (125,161.5) node   {$\square B_{2}$};
\draw (368,132.5) node   {$\square B_{1}$};
\draw (368,161.5) node   {$\square B_{2}$};
\draw (261,131.5) node   {$\square B_{1} ,\ \square B_{2}$};
\draw (610,126.5) node   {$\square B_{1}$};
\draw (610,167.5) node   {$\square B_{2}$};
\draw (179,164) node   {$x$};
\draw (319,164) node   {$w$};
\draw (356,103) node   {$u_{1}$};
\draw (356.5,186) node   {$u_{2}$};
\draw (422,165) node   {$x$};
\draw (257,165) node   {$v$};
\draw (510,146) node   {$V$};
\draw (557,180) node   {$V_{2}$};
\draw (556,114) node   {$V_{1}$};

\end{tikzpicture}

\caption{Left: extending an ordinary Veltman model. Right: extending a generalized Veltman model. Straight lines represent $R$-transitions, while curved lines represent $S_w$-transitions. Full lines represent the starting configuration, and dashed lines represent transitions that are to be added. }
\label{fig:slika}
\end{figure}
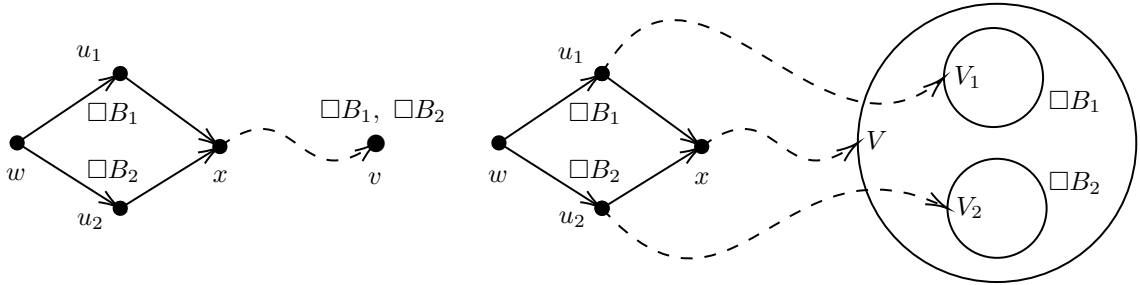

With ordinary semantics, we need to ensure that for our $S_w$-successor $v$, for each 
$\square B_1 \in u_1$ and $\square B_2 \in u_2$, we have $\square B_1, \square B_2 \in v$. 
It is not obvious that such construction is possible. 
In case of \ilmn, it was successfully solved in \cite{Goris-Joosten-08} by preserving the invariant 
that sets of boxed formulas in $u_i$ are linearly ordered. 
This way, finite (quasi-)models can always be extended by only looking at the last $u_i$. 
With generalized semantics, we need to produce a whole set of worlds $V$, but the requirements on 
each particular world are less demanding. 
For each $u_i$, there has to be a corresponding $V_i \subseteq V$ with $\square B_i$ contained 
(true) in every world of $V_i$. 
Lemma \ref{label-mn} gives a recipe for producing such worlds.

\begin{theorem}
The logic \ilmn{} is complete w.r.t.\ \ilgen{M\textsubscript{0}}-models.
\end{theorem}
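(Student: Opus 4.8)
The plan is to invoke Theorem \ref{glavni}: it suffices to show that for an arbitrary admissible $\mathcal D$ the \ilmn{}-structure $(W,R,\{S_w:w\in W\},\Vdash)$ for $\mathcal D$ possesses the property \kgen{M\textsubscript{0}}, i.e.\ that $wRuRxS_wV$ implies the existence of $V'\subseteq V$ with $uS_wV'$ and $R[V']\subseteq R[u]$. This will run parallel to the proof for \il{M}, with Lemma \ref{lema-ILM} replaced by Lemma \ref{label-mn}; the only genuinely new feature is that the extra world $x$ sitting between $u$ and $V$ is exactly what the weaker label lemma for \ilmn{} forces us to use.

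Concretely, given $w,u,x,V$ with $wRuRxS_wV$, I would set
$$V' = \{\, v\in V : w\prec_{u_\emptyset^\square} v \,\}.$$
First I would verify $uS_wV'$. We already have $wRu$, and $V'\subseteq V\subseteq R[w]$ since $xS_wV$. So suppose $w\prec_S u$; since $uRx$ means $u\prec x$, Lemma \ref{label-mn} gives $w\prec_{S\cup u_\emptyset^\square} x$, and then $xS_wV$ produces some $v\in V$ with $w\prec_{S\cup u_\emptyset^\square} v$. By Lemma \ref{lema3.2}(a) this yields both $w\prec_{u_\emptyset^\square} v$ (so $v\in V'$) and $w\prec_S v$, which is what $uS_wV'$ requires; taking $S=\emptyset$ in particular shows $V'\neq\emptyset$. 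Next I would check $R[V']\subseteq R[u]$: if $v\in V'$ and $vRz$, then $w\prec_{u_\emptyset^\square} v$ gives $u_\emptyset^\square\subseteq v$ by Lemma \ref{lema3.2}(c), hence every $\square B\in u$ lies in $v$; since $vRz$ this forces $B,\square B\in z$ for all such $B$, whence $u\prec z$, i.e.\ $uRz$, just as in the \il{M} argument.

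The only real subtlety, and the step I would treat most carefully, is the detour through $x$: in the \il{M} case Lemma \ref{lema-ILM} is applied directly to $u$, but Lemma \ref{label-mn} only transports $u_\emptyset^\square$ across the single step $u\prec x$, so both the definition of $V'$ and the verification that $uS_wV'$ must be routed through $xS_wV$ rather than through an $S_w$-successor of $u$. Everything else, in particular the bookkeeping of boxed formulas that delivers $uRz$, is the same routine manipulation already carried out for \il{M}.
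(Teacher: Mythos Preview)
Your proposal is correct and follows essentially the same argument as the paper: invoke Theorem \ref{glavni}, set $V'=\{v\in V: w\prec_{u_\emptyset^\square} v\}$, use Lemma \ref{label-mn} on $w\prec_S u\prec x$ to pass the enlarged label through $xS_wV$, and finish $R[V']\subseteq R[u]$ via the boxed formulas in $u$. If anything, your write-up is slightly more explicit than the paper's in noting that the witness $v$ satisfies both $w\prec_S v$ and $w\prec_{u_\emptyset^\square} v$, and in singling out the role of the intermediate world $x$.
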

\begin{proof}
Given Theorem \ref{glavni}, it suffices to show that for any set $\mathcal{D}$, the \ilmn-structure for $\mathcal{D}$ possesses the property \kgen{M\textsubscript{0}}.
Let $(W,R,\{ S_w:w\in W\},\Vdash)$ be the \ilmn{}-structure for $\mathcal{D}.$

Assume $wRuRx S_w V$ and take $V' = \{ v \in V : w \prec_{u_\emptyset^\square} v \}$. 
We claim that $u S_w V'$ and $R[V'] \subseteq R[u]$.
Obviously $V'\subseteq V\subseteq R[w].$ 
Assume $w \prec_S u$. Lemma \ref{label-mn} and $w \prec_{S} u \prec x$ imply 
$w \prec_{S \cup u_\emptyset^\square} x$. 
Now $xS_wV$ and the definition of $S_w$ imply there is $v \in V$ such that
$w \prec_{S \cup u_\emptyset^\square} v$.
Lemma \ref{lema3.2} implies $w\prec_{u^\square_\emptyset} v.$ So, $v\in V'.$

It remains to verify that $R[V'] \subseteq R[u]$. Let $v \in V'$ and $z\in W$ be worlds such 
that $vRz$. 
Since $w \prec_{u_\emptyset^\square} v$, for all $\square B \in u$ we have $\square B \in v$, 
and since $vRz$, it follows that $\square B, B \in z$. Thus, $u \prec z$ i.e.\  $uRz.$
\end{proof}

\subsection{The logic \il{P}}

As in the case of the logic \il{M}, the completeness of \il{P} w.r.t.\ generalized semantics is an easy consqeuence of the completeness 
of \il{P} w.r.t.\ ordinary semantics, first proved by de Jongh and Veltman \cite{deJongh-Veltman-90}.

Verbrugge determined the characteristic property \kgen{P} in 1992.\ in an unpublished paper:
\[ wRw'RuS_w V \Rightarrow (\exists V' \subseteq V)\ uS_{w'} V'  \]

\begin{lemma}[\cite{Bilkova-Goris-Joosten} Lemma 3.8]
\label{lema-ILP}
Let $w$, $x$ and $u$ be some \il{P}-MCS's, and let $S$ and $T$ be arbitrary sets of formulas.  
If  $w \prec_S x \prec_T u$ then $w \prec_{S \cup x_T^\boxdot} u$.
\end{lemma}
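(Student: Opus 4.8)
\textit{Proof proposal.} The plan is to unfold the definition of $\prec$ and verify the implication by hand. Let $W'$ be a finite subset of $S\cup x_T^\boxdot$ and $A$ a formula with $A\rhd\bigvee_{G\in W'}\neg G\in w$; I must produce $\neg A,\square\neg A\in u$. First I would split $W'=W'_S\cup W'_x$ with $W'_S\subseteq S$ and $W'_x\subseteq x_T^\boxdot$, both finite. By the very definition of $x_T^\boxdot$, each $G\in W'_x$ has the form $\neg B_G$ or $\square\neg B_G$ for some $B_G$ with $B_G\rhd\bigvee_{H\in T_G}\neg H\in x$, where $T_G\subseteq T$ is finite.

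The first substantial step is to move the witnessing $\rhd$-formula from $w$ down to $x$. Applying the characteristic axiom \textsf{P} (as $\phi\rhd\psi\to\square(\phi\rhd\psi)$) to $A\rhd\bigvee_{G\in W'}\neg G\in w$ gives $\square(A\rhd\bigvee_{G\in W'}\neg G)\in w$; since $\emptyset\subseteq S$, Lemma~\ref{lema3.2} yields $w\prec_\emptyset x$, and (recalling that $\square\chi$ abbreviates $\neg\chi\rhd\bot$) $\prec_\emptyset$ transfers boxed formulas, so $A\rhd\bigvee_{G\in W'}\neg G\in x$.

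Now I work inside $x$. For $G=\neg B_G$ one gets $\neg G\rhd\bigvee_{H\in T_G}\neg H\in x$ from $B_G\rhd\bigvee_{H\in T_G}\neg H\in x$ using $\vdash\neg\neg B_G\leftrightarrow B_G$ together with J1 and J2; for $G=\square\neg B_G$ one has $\neg G=\Diamond B_G$, and J5 ($\Diamond B_G\rhd B_G$) with J2 again gives $\neg G\rhd\bigvee_{H\in T_G}\neg H\in x$. Collecting these by J3 gives $\bigvee_{G\in W'_x}\neg G\rhd\bigvee_{H\in T^*}\neg H\in x$ for the finite set $T^*:=\bigcup_{G\in W'_x}T_G\subseteq T$, and one more application of J1, J3 and J2 rewrites $A\rhd\bigvee_{G\in W'}\neg G\in x$ as $A\rhd\bigl(\bigvee_{G\in W'_S}\neg G\vee\bigvee_{H\in T^*}\neg H\bigr)\in x$. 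To discard the remaining $W'_S$-disjuncts I would use that for $G\in W'_S\subseteq S$ the relation $w\prec_S x$ forces $\square G\in x$ (instantiate the definition of $\prec_S$ with $\{G\}$ and the trivially provable $\neg G\rhd\neg G$, which refines Lemma~\ref{lema3.2}(c)). Since $\square\bigl(((\neg G\vee D)\to D)\leftrightarrow(G\vee D)\bigr)$ and $\square G\to\square(G\vee D)$ are theorems, J1 gives $\square G\to\bigl((\neg G\vee D)\rhd D\bigr)$, so with $\square G\in x$ and J2 each such disjunct can be deleted; iterating over the finitely many $G\in W'_S$ leaves $A\rhd\bigvee_{H\in T^*}\neg H\in x$. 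Finally $x\prec_T u$, applied to the finite set $T^*\subseteq T$, delivers $\neg A,\square\neg A\in u$.

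The part I expect to be the crux is the treatment of the $W'_S$-disjuncts: once \textsf{P} has carried the witness into $x$, the disjuncts coming from $x_T^\boxdot$ collapse to a disjunction over $T$ using only \il{}-axioms, but the disjuncts coming from $S$ do not, and the non-obvious observation is that $w\prec_S x$ hands us exactly the boxed formulas $\square G$ needed to erase them. The appeal to \textsf{P} in the transfer step is equally indispensable, and is where the hypothesis that the MCS's are \il{P}-MCS's (rather than \il{X}-MCS's for some weaker \textsf{X}) is actually used.
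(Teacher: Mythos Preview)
The paper does not give its own proof of this lemma; it is quoted verbatim from \cite{Bilkova-Goris-Joosten} (their Lemma~3.8) and used as a black box. So there is nothing in the present paper to compare against.

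Your argument is correct. The two genuinely non-routine moves are exactly the ones you flag: (i) using the axiom \textsf{P} to push the witnessing $\rhd$-formula from $w$ into $x$, and (ii) the observation that $w\prec_S x$ gives not only $G\in x$ but $\square G\in x$ for each $G\in S$ (via $\neg G\rhd\neg G\in w$ and the definition of $\prec_S$), which is precisely what is needed to erase the $W'_S$-disjuncts inside $x$. Once those disjuncts are gone and the $W'_x$-disjuncts have been traded for a disjunction over a finite $T^*\subseteq T$ using J1, J2, J3 and J5 as you describe, the hypothesis $x\prec_T u$ finishes the job. One cosmetic remark: in your last reduction you tacitly use that from $C\rhd D$ one gets $(B\vee C)\rhd(B\vee D)$, which follows from J1, J2, J3 as you indicate; it may be worth stating this once rather than leaving it implicit.
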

\begin{theorem}
The logic \il{P} is complete w.r.t.\ \ilgen{P}-models.
\end{theorem}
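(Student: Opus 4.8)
By Theorem \ref{glavni}, it suffices to show that for every set $\mathcal{D}$ the \il{P}-structure for $\mathcal{D}$ possesses the characteristic property \kgen{P}, namely
$$ wRw'RuS_w V \Rightarrow (\exists V' \subseteq V)\ uS_{w'} V'. $$
So I would fix the \il{P}-structure $(W,R,\{S_w : w \in W\},\Vdash)$ for $\mathcal{D}$ and assume $wRw'RuS_wV$; unwinding the definitions this means $w \prec w'$, $w' \prec u$, $w \prec u$, $V \subseteq R[w]$ and for every $S$ with $w \prec_S u$ there is $v \in V$ with $w \prec_S v$. The natural candidate, as in the proofs for \il{M} and \ilmn{} above, is to take $V' = \{ v \in V : w' \prec v \}$ (i.e.\ $w' \prec_\emptyset v$, equivalently those $v$ containing all the $\square$-formulas forced by the relevant box-principles at $w'$). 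Then I must check three things: $V' \subseteq R[w']$, that $uS_{w'}V'$, and implicitly that $V'$ is nonempty (which follows once $uS_{w'}V'$ is established, since $w' \prec_\emptyset u$).

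The inclusion $V' \subseteq R[w']$ is immediate from the definition of $V'$: $v \in V'$ means exactly $w' \prec v$, i.e.\ $w'Rv$. For $uS_{w'}V'$ I must verify: for every set $T$ with $w' \prec_T u$, there is $v \in V'$ with $w' \prec_T v$. Here is where Lemma \ref{lema-ILP} does the work. Given $w' \prec_T u$, combine it with $w \prec w' = w \prec_\emptyset w'$ via Lemma \ref{lema-ILP} to obtain $w \prec_{(w')_T^\boxdot} u$ (using $S = \emptyset$, so $S \cup x_T^\boxdot = (w')_T^\boxdot$). Now apply the hypothesis $uS_wV$ with the set $S := (w')_T^\boxdot$: there is $v \in V$ with $w \prec_{(w')_T^\boxdot} v$. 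By Lemma \ref{lema3.2}(a) this gives both $w \prec_\emptyset v$ and, more usefully, I want to extract $w' \prec_T v$ and $w' \prec_\emptyset v$ from $w \prec_{(w')_T^\boxdot} v$. The key observation is that $w \prec_{(w')_T^\boxdot} v$ means $(w')_T^\boxdot \subseteq v$ — wait, that is not quite how $\prec$ unwinds; rather $w_{(w')_T^\boxdot}^\boxdot \subseteq v$. I would instead argue directly: $w \prec_{(w')_T^\boxdot} v$ together with the shape of the set $(w')_T^\boxdot$ (it contains, for each finite $T' \subseteq T$ and each $A$ with $A \rhd \bigvee_{G \in T'} \neg G \in w'$, both $\neg A$ and $\square \neg A$) should let me conclude $w' \prec_T v$; and taking $T = \emptyset$ (so $(w')_\emptyset^\boxdot = \{\neg A, \square\neg A : A \rhd \bot \in w'\}$) gives $w' \prec_\emptyset v$, i.e.\ $v \in V'$.

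The main obstacle, and the step I would spend the most care on, is precisely the passage from $w \prec_{(w')_T^\boxdot} v$ back to $w' \prec_T v$ — that is, showing that membership in the smart-label set $(w')_T^\boxdot$ (relative to $w$) forces the defining implications of $w' \prec_T v$. Concretely: suppose $A \rhd \bigvee_{G \in T'} \neg G \in w'$ for some finite $T' \subseteq T$; I need $\neg A, \square \neg A \in v$. Since $\neg A$ and $\square \neg A$ lie in $(w')_T^\boxdot$, and $w \prec_{(w')_T^\boxdot} v$ means $w_{(w')_T^\boxdot}^\boxdot \subseteq v$, I would use that $\square\neg A \in w'$ already (from the box-principle / the fact that $w' \prec w'$ is not needed, but rather that $A \rhd \bigvee \neg G \in w'$ yields $\square\neg A \in (w')_T^\square$ hence a fortiori we can feed it through): more cleanly, since $w \prec w'$ and $w' \ni A \rhd \bigvee_{G\in T'}\neg G$, applying $\prec$-reasoning once more, or simply using that $\square\neg A \in w'$ implies via $w' \prec v$ that... — here I would instead route everything through Lemma \ref{lema-ILP} a second time or through Lemma \ref{lema3.2}(b), chaining $w' \prec_T u$ and $u \prec v$ appropriately. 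I expect the clean formulation to be: from $w \prec w'$ and $w' \prec_T u$ get $w \prec_{(w')_T^\boxdot} u$; note $(w')_\emptyset^\boxdot \subseteq (w')_T^\boxdot$ always, so the chosen $v$ also satisfies $w \prec_{(w')_\emptyset^\boxdot} v$; then a short direct argument (mirroring the $R[V'] \subseteq R[u]$ arguments in the \il{M} and \ilmn{} proofs, where $\square B \in u' \Rightarrow \square B, B \in$ successor) shows $w' \prec v$. Assembling these, $V' = \{v \in V : w' \prec v\}$ witnesses $uS_{w'}V'$, completing the verification of \kgen{P} and hence, by Theorem \ref{glavni}, the completeness of \il{P} with respect to \ilgen{P}-models.
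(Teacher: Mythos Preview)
Your approach is essentially the paper's: reduce to Theorem~\ref{glavni}, verify \kgen{P} on the \il{P}-structure, take $V' = \{v \in V : w' \prec v\} = V \cap R[w']$, and for an arbitrary $T$ with $w' \prec_T u$ apply Lemma~\ref{lema-ILP} to $w \prec_\emptyset w' \prec_T u$ to get $w \prec_{(w')_T^\boxdot} u$, then use $uS_wV$ to find $v \in V$ with $w \prec_{(w')_T^\boxdot} v$. All of this is correct and matches the paper exactly.

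The gap is in the step you yourself flag as the ``main obstacle'': passing from $w \prec_{(w')_T^\boxdot} v$ to $w' \prec_T v$ (and to $v \in V'$). You try several routes---unwinding $w_{(w')_T^\boxdot}^\boxdot \subseteq v$ directly, invoking Lemma~\ref{lema3.2}(b) with $u \prec v$, applying Lemma~\ref{lema-ILP} again---but none of these work as stated. In particular you do \emph{not} know $u \prec v$ (the $v$ produced by $uS_wV$ need not be an $R$-successor of $u$), so the Lemma~\ref{lema3.2}(b) route fails. The missing observation is simply Lemma~\ref{lema3.2}(c): $w \prec_S v$ implies $S \subseteq v$. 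Applied with $S = (w')_T^\boxdot$, this gives $(w')_T^\boxdot \subseteq v$ in one stroke. Now if $A \rhd \bigvee_{G \in T'}\neg G \in w'$ for finite $T' \subseteq T$, then $\neg A, \square\neg A \in (w')_T^\boxdot \subseteq v$ by definition, so $w' \prec_T v$. Since $(w')_\emptyset^\boxdot \subseteq (w')_T^\boxdot \subseteq v$, also $w' \prec v$, i.e.\ $v \in V'$. That closes the argument; the paper does exactly this (citing Lemma~\ref{lema3.2} for the inclusion).
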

\begin{proof}
Given Theorem \ref{glavni}, it suffices to show that for any set $\mathcal{D}$, the \il{P}-structure for $\mathcal D$ possesses the property \kgen{P}. 
Let $(W,R,\{ S_w:w\in W\},\Vdash)$ be the \il{P}-structure for $\mathcal{D}.$

Let $wRw'RuS_w V$ and take $V' = V \cap R[w']$. We claim $uS_{w'} V'.$ 
Let $T$ be arbitrary such that $w'\prec_T u$. 
Lemma \ref{lema-ILP} and $w\prec_\emptyset w'\prec_T u$ imply $w \prec_{{w'}_T^\boxdot} u.$ 
Now, $uS_w V$ implies that there is a $v \in V$ with $w \prec_{{w'}_T^\boxdot} v$. 
Let $A \rhd \neg\bigwedge T' \in w'$ for some finite $T' \subseteq T$. 
Then $\neg A, \square \neg A \in {w'}_T^\boxdot$. 
Lemma \ref{lema3.2}  and $w \prec_{{w'}_T^\boxdot} v$ imply $\neg A, \square \neg A \in v$.
Thus $w' \prec_T v$. 
Finally, $V' \subseteq R[w']$ holds by assumption, thus $uS_{w'} V'$.
\end{proof}

\subsection{The logic \il{P\textsubscript{0}}}

The interpretability principle $\textsf{P\textsubscript{0}} = A \rhd \Diamond B \to \square(A \rhd B)$ is introduced 
in J.\ J.\ Joosten's master thesis in 1998.

In \cite{Goris-Joosten-11} it is shown 
that the interpretability logic \il{P\textsubscript{0}} is incomplete w.r.t.\ Veltman models. 
Since we will show that \il{P\textsubscript{0}} is 
complete w.r.t.\ generalized semantics, this is the first example of 
an interpretability logic complete w.r.t.\ the generalized semantics, but incomplete w.r.t.\ ordinary semantics.

Characteristic property \kgen{P\textsubscript{0}} was determined in \cite{Goris-Joosten-11} . A slightly reformulated version:
\[
     wRxRuS_w V \ \& \ (\forall v \in V)  R[v] \cap Z \neq \emptyset \ \Rightarrow \ (\exists Z' \subseteq Z) uS_x Z'
\]


The following technical lemma is almost obvious.
\begin{lemma}
\label{psintaksa}
Let $x$ be an \il{X}-MCS, $A$ a formula, and $T$ a finite set of formulas. Let $B_G$ be an arbitrary formula, and $T_G$ an arbitrary finite set of formulas, for every $G\in T$. Furthermore, assume:
\begin{itemize}
    \item[a)]  $A\rhd \bigvee_{G\in T} B_G \in x$;
    \item[b)] $(\forall G\in T) \ B_G\rhd \bigvee_{H\in T_G} \neg H\in x$. 
\end{itemize}
Then we have $A\rhd \bigvee_{H \in S'} \neg H\in x$, where $S' = \bigcup_{G\in T} T_G$.
\end{lemma}
\begin{proof}
Let $G \in T$. Since $T_G\subseteq S'$, clearly
$\vdash \bigvee_{H\in T_G} \neg H\rhd \bigvee_{H\in S'}\neg H$.
The requirement b) and the axiom (J2) imply 
$B_G \rhd \bigvee_{H\in S'} \neg H\in x$.
Now $|T|-1$ applications of the axiom (J3) give 
$\bigvee_{G\in T} B_G \rhd \bigvee_{H\in S'}\neg  H\in x$.
Finally, apply the requirement a) and the axiom (J2).
\end{proof}

Next we need a labelling lemma for \il{P\textsubscript{0}}. This is where we use the technical lemma above.
\begin{lemma} 
\label{labelling-pn}
Let $w$, $x$ and $u$ be some \il{P\textsubscript{0}}-MCS's, and let $S$ be a set of formulas.
If $w  \prec x \prec_S u$ then $w \prec_{ x_S^\square } u$.
\end{lemma}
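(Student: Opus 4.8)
The statement to prove is Lemma \ref{labelling-pn}: if $w \prec x \prec_S u$ (for \il{P\textsubscript{0}}-MCS's $w,x,u$ and a set $S$), then $w \prec_{x_S^\square} u$. The plan is to unfold the definition of $\prec_{x_S^\square}$ directly: fix a finite $S'' \subseteq x_S^\square$ and a formula $A$ with $A \rhd \bigvee_{G \in S''} \neg G \in w$, and show $\neg A, \square \neg A \in u$. Since $S'' \subseteq x_S^\square$, each $G \in S''$ has the form $\square \neg B_G$ for some formula $B_G$ such that, for a suitable finite $T_G \subseteq S$, we have $B_G \rhd \bigvee_{H \in T_G} \neg H \in x$. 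So $\neg G = \neg \square \neg B_G = \Diamond B_G$, and the hypothesis becomes $A \rhd \bigvee_{G \in S''} \Diamond B_G \in w$.

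Now I would invoke the principle \textsf{P\textsubscript{0}}, which is $C \rhd \Diamond D \to \square(C \rhd D)$. To apply it to a disjunction I would first note $\vdash \Diamond B_G \to \Diamond\bigvee_{G \in S''} B_G$, so by (J1) and (J2) from $A \rhd \bigvee_{G \in S''}\Diamond B_G \in w$ we get $A \rhd \Diamond \bigvee_{G \in S''} B_G \in w$; then \textsf{P\textsubscript{0}} gives $\square\bigl(A \rhd \bigvee_{G \in S''} B_G\bigr) \in w$. Since $w \prec x$ (i.e. $w \prec_\emptyset x$), boxed formulas in $w$ transfer to $x$: in particular $A \rhd \bigvee_{G \in S''} B_G \in x$. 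At this point I am in exactly the situation of the technical Lemma \ref{psintaksa}: with $T = S''$ (renaming the index set $T$ of that lemma), the roles played by the $B_G$'s there matched by our $B_G$'s, and $T_G \subseteq S$ the finite sets witnessing $B_G \rhd \bigvee_{H \in T_G}\neg H \in x$. Lemma \ref{psintaksa} yields $A \rhd \bigvee_{H \in S'} \neg H \in x$ where $S' = \bigcup_{G \in S''} T_G$, a finite subset of $S$.

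Finally, $A \rhd \bigvee_{H \in S'} \neg H \in x$ together with $x \prec_S u$ and $S' \subseteq S$ finite gives, directly by the definition of $\prec_S$, that $\neg A, \square \neg A \in u$. Since $A$ and the finite $S'' \subseteq x_S^\square$ were arbitrary, this is precisely $w \prec_{x_S^\square} u$.

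The only delicate point — and the step I would be most careful about — is the bookkeeping of moving $\Diamond$ past a finite disjunction and applying \textsf{P\textsubscript{0}}: one must make sure the disjunction $\bigvee_{G \in S''}\Diamond B_G$ is correctly rewritten, using $\vdash \bigvee_G \Diamond B_G \to \Diamond\bigvee_G B_G$ and the monotonicity of $\rhd$ in its right argument (via (J1) and (J2)), before \textsf{P\textsubscript{0}} applies. Everything else is either a direct appeal to Lemma \ref{psintaksa}, to the transfer of boxed formulas along $\prec$ (Lemma \ref{lema3.2}), or to the definition of $\prec_S$. I do not expect any genuine obstacle beyond this syntactic manipulation; the empty-disjunction edge case ($S'' = \emptyset$, where $\bigvee$ is $\bot$ and $\Diamond\bot$ is inconsistent) is handled uniformly since then $A \rhd \bot \in w$ already forces $\square\neg A \in w$ hence into $x$ and then into $u$.
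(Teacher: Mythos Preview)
Your proof is correct and follows essentially the same route as the paper's: unfold $x_S^\square$, pull $\Diamond$ past the finite disjunction, apply \textsf{P\textsubscript{0}}, push the boxed formula along $w \prec x$, invoke Lemma~\ref{psintaksa}, and conclude via $x \prec_S u$. The only difference is notational (you write $S''$ where the paper writes $T$), and your explicit treatment of the empty-disjunction edge case is a small bonus the paper omits.
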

\begin{proof}
Let $A$ be an arbitrary formula. Let $T\subseteq x_S^\square$ be a finite set such that 
$A\rhd \bigvee_{G\in T} \neg G\in w.$ 
We will prove that $\neg A,\square \neg A\in u.$
If $G\in T \ (\subseteq x_S^\square)$, then $G=\square\neg B_G$, for some formula $B_G$.
Thus $A\rhd \bigvee_{G\in T} \neg \square\neg B_G\in w$, and by easy inferences and maximal consistency:
$A\rhd \bigvee_{G\in T} \Diamond B_G\in w$, and $A\rhd \Diamond \bigvee_{G\in T} B_G\in w$.
Applying \textsf{P\textsubscript{0}} gives
$\square (A\rhd\bigvee_{G\in T} B_G)\in w$.
The assumption $w\prec x$ implies $A\rhd \bigvee_{G\in T} B_G \in x.$
For each $G\in T \ (\subseteq x_S^\square)$ there is a finite subset $T_G$ of $S$ 
such that $B_G\rhd \bigvee_{H\in T_G} \neg H\in x.$
Let $S'=\bigcup_{G\in T} T_G.$ Clearly $S'$ is a finite subset of $S.$
Lemma \ref{psintaksa} implies $A\rhd \bigvee_{H\in S'} \neg H\in x.$ 
Finally, $S' \subseteq S$ and the assumption $x\prec_S u$ imply $\neg A,\square\neg A\in u.$
\end{proof}

The following simple observation is useful both for \il{P\textsubscript{0}}  and \il{R}. 

\begin{lemma}
\label{rsintaksa}
Let $w$, $x$, $v$ and $z$ be some \il{X}-MCS's, and let $S$ be a set of formulas.  
If $w \prec_{x_S^\square} v\prec z$ then $x\prec_S z$.
\end{lemma}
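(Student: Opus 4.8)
The plan is to reduce everything to the inclusion $x_S^\square \subseteq v$, which the hypothesis already supplies. Recall that $x \prec_S z$ means: for every finite $T \subseteq S$ and every formula $C$ with $C \rhd \bigvee_{H\in T}\neg H \in x$, we have $\neg C, \square\neg C \in z$. I would fix such a $T$ and $C$ and argue directly.

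First, by the definition of $x_S^\square$ (Definition~\ref{def-smart-label-sets}), the assumptions that $T \subseteq S$ is finite and $C \rhd \bigvee_{H\in T}\neg H \in x$ yield $\square\neg C \in x_S^\square$. Applying Lemma~\ref{lema3.2}(c) to the hypothesis $w \prec_{x_S^\square} v$ gives $x_S^\square \subseteq v$, hence $\square\neg C \in v$. Next, instantiate axiom~J1 with $A := C$ and $B := \bot$: since $\neg C$ abbreviates $C \to \bot$, this is $\vdash \square\neg C \to C\rhd\bot$, so $C\rhd\bot \in v$. Finally, $v\prec z$ unfolds to $v_\emptyset^\boxdot \subseteq z$, and since $C\rhd\bot$ is $C \rhd \bigvee_{G\in\emptyset}\neg G$, both $\neg C$ and $\square\neg C$ lie in $v_\emptyset^\boxdot$, hence in $z$. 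As $T$ and $C$ were arbitrary, $x\prec_S z$.

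This uses nothing beyond the base logic \il{}, consistent with the general \il{X} in the statement; the only mildly non-obvious point is the instantiation of J1, which is precisely what discharges a boxed formula living in $x_S^\square$ into $v$. An alternative that avoids Lemma~\ref{lema3.2}(c) is to feed $w \prec_{x_S^\square} v$ the trivial theorem $\Diamond C \rhd \Diamond C \in w$ (valid by necessitation of a tautology and J1), noting $\Diamond C = \neg\square\neg C = \bigvee_{G\in\{\square\neg C\}}\neg G$ with $\{\square\neg C\}$ a finite subset of $x_S^\square$; this produces $\neg\Diamond C, \square\neg\Diamond C \in v$, hence $\square\neg C \in v$, and the rest is identical. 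I do not expect any genuine obstacle; the one routine check is the degenerate case $T=\emptyset$, where $\bigvee_{H\in T}\neg H$ is $\bot$, but the reasoning above is uniform in $T$.
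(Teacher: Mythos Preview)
Your proof is correct and follows essentially the same route as the paper: both arguments use Lemma~\ref{lema3.2}(c) to pass from $\square\neg C \in x_S^\square$ to $\square\neg C \in v$, and then use $v \prec z$ to conclude $\neg C, \square\neg C \in z$. The only difference is cosmetic: the paper compresses the last step into the single remark ``since $v\prec z$'' (relying on the observation after Definition~\ref{def-smart-label-sets} that $v_\emptyset^\boxdot$ effectively encodes $\{\square A : \square A \in v\}$), whereas you unpack it explicitly via J1 to obtain $C\rhd\bot \in v$ before invoking $v \prec_\emptyset z$.
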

\begin{proof}
Let $S'$ be a finite subset of $S$ with $A \rhd \bigvee_{G\in S'} \neg G \in x$. 
Then $\square \neg A \in x_S^\square$. 
Now $w \prec_{x_S^\square} v$ and Lemma \ref{lema3.2} imply 
$\square \neg A \in v$. 
Since $v\prec z$, we have $\neg A, \square \neg A \in z$.
\end{proof}

\begin{theorem}\label{ILP0-complete}
The logic \il{P\textsubscript{0}} is complete w.r.t.\ \ilgen{P\textsubscript{0}}-frames.
\end{theorem}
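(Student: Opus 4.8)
By Theorem \ref{glavni}, it suffices to prove that for every set $\mathcal{D}$ the \il{P\textsubscript{0}}-structure $\mathfrak{M}=(W,R,\{S_w:w\in W\},\Vdash)$ for $\mathcal{D}$ possesses the property \kgen{P\textsubscript{0}}. So the plan is the following. I would assume $wRxRuS_wV$ and that $R[v]\cap Z\neq\emptyset$ holds for every $v\in V$, and then exhibit a set $Z'\subseteq Z$ with $uS_xZ'$. The set I would use is simply $Z'=Z\cap R[x]$.

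To verify $uS_xZ'$, unwinding Definition \ref{ilx-struktura}, I must check three things: $xRu$, which is part of the hypothesis; $Z'\subseteq R[x]$, which is immediate; and that for every set of formulas $S$ with $x\prec_S u$ there exists $z\in Z'$ with $x\prec_S z$. Only the last point requires an argument. Fixing such an $S$: from $wRx$ we have $w\prec x$, so Lemma \ref{labelling-pn} applied to $w\prec x\prec_S u$ gives $w\prec_{x_S^\square}u$; since $uS_wV$, the definition of $S_w$ in $\mathfrak{M}$ provides a $v\in V$ with $w\prec_{x_S^\square}v$; by hypothesis I may pick $z\in R[v]\cap Z$, so that $v\prec z$; and then Lemma \ref{rsintaksa} applied to $w\prec_{x_S^\square}v\prec z$ yields $x\prec_S z$, hence also $xRz$ by Lemma \ref{lema3.2}, so that $z\in Z\cap R[x]=Z'$. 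Running this very argument with $S=\emptyset$ (which is permitted because $xRu$ gives $x\prec_\emptyset u$) simultaneously shows that $Z'$ is nonempty, as it must be.

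I do not anticipate any real obstacle here: the proof is a direct combination of the two labelling lemmas and is structurally the same as the \il{M}, \ilmn{} and \il{P} cases treated above. The only point to be careful about is that $Z'$ has to gather \emph{all} candidate witnesses lying in $Z\cap R[x]$ at once, since the witness $z$ produced in the argument depends on the set $S$; one cannot fix a single $z_v$ per $v\in V$ in advance. The content genuinely specific to \textsf{P\textsubscript{0}} has already been isolated in Lemma \ref{labelling-pn} --- which is where the axiom \textsf{P\textsubscript{0}} enters, via the technical Lemma \ref{psintaksa} --- together with Lemma \ref{rsintaksa}.
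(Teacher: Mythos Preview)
Your proof is correct and follows essentially the same approach as the paper's: both arguments use Lemma~\ref{labelling-pn} to pass from $x\prec_S u$ to $w\prec_{x_S^\square}u$, pick a witness $v\in V$ via the definition of $S_w$, choose $z\in R[v]\cap Z$, and then apply Lemma~\ref{rsintaksa} to conclude $x\prec_S z$. The only cosmetic difference is that the paper collects the specific witnesses into $Z'=\{z_S : x\prec_S u\}$, whereas you take the slightly larger $Z'=Z\cap R[x]$; both choices satisfy the definition of $uS_xZ'$ for the same reason, and your version is arguably cleaner.
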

\begin{proof}
Given Theorem \ref{glavni}, it suffices to show that for any set $\mathcal{D}$, the \il{P\textsubscript{0}}-structure for $\mathcal D$ possesses the property \kgen{P\textsubscript{0}}. 
Let $(W,R,\{ S_w:w\in W\},\Vdash)$ be the \il{P\textsubscript{0}}-structure for $\mathcal{D}.$

Assume $wRxRuS_w V$ and $R[v]\cap Z\neq \emptyset$ for each $v\in V$.
We will prove that there is $Z'\subseteq Z$ such that $uS_x Z'$.

Let $S$ be a set of formulas such that $w \prec x\prec_S u$.
Lemma \ref{labelling-pn} implies $w\prec_{x_S^\square} u.$
Since $uS_w V$, there is $v\in V$ such that $w\prec_{x_S^\square} v.$
Since $R[v]\cap Z\neq \emptyset$, choose a world $z_S\in R[v]\cap Z$.
Now $w\prec_{x_S^\square} v \prec z_S$ and Lemma \ref{rsintaksa} imply $x\prec_S z_S$. 
Put $Z'=\{ z_S : S$ is a set of formulas such that $x\prec_S u \}$.
Clearly $Z'\subseteq Z.$ 
So, $Z'\subseteq R[x]$, and since for each set $S$ such that $x\prec_S u$ we have $x\prec_S z_S $,  it follow that $uS_x Z'.$
\end{proof}

\vskip 2ex
In \cite{Vukovic08} a possibility was explored of transforming a generalized Veltman model to an ordinary Veltman model, such that these two models are bisimilar (in some aptly defined sense). A natural question is whether such transformation exists if we add the requirement that characteristic properties are preserved. The example of \il{P\textsubscript{0}} shows that there are \ilgen{P\textsubscript{0}}-models with no (bisimilar or otherwise) counterpart \il{P\textsubscript{0}}-models.

\subsection{The logic \il{R}}

Completeness of \il{R} w.r.t.\ ordinary Veltman semantics is an open problem (see \cite{Bilkova-Goris-Joosten}), but completeness w.r.t.\ the generalized semantics is not yet resolved either. 
In this section we will prove that \il{R} is complete w.r.t.\ the generalized semantics.

Characteristic property \kgen{R} was determined in \cite{Goris-Joosten-11}.  A slightly reformulated version:
\[
        wRxRuS_w V \Rightarrow (\forall C \in \mathcal{C}(x, u))(\exists U \subseteq V)(x S_w U \ \& \ 
        R[U] \subseteq C),
\]
where $\mathcal{C}(x, u) = \{ C \subseteq R[x] : (\forall Z) (uS_x Z\Rightarrow Z\cap C \neq 
\emptyset) \}$ is the family of ``choice sets''.

\vskip 2ex

\begin{lemma}[\cite{Bilkova-Goris-Joosten}, Lemma 3.10] 
\label{lema-ILR}
Let $w$, $x$ and $u$ be some \il{R}-MCS's, and let $S$ and $T$ be arbitrary sets of formulas. 
If $w \prec_S x \prec_T u$ then $w \prec_{S \cup x_T^\square} u.$
\end{lemma}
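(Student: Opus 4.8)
The plan is to argue exactly in the style of the labelling lemmas already displayed above --- in particular the \il{P\textsubscript{0}}-labelling Lemma~\ref{labelling-pn} --- with the principle \textsf{R} doing the work at the single non-routine step. Unwinding the definition of $w \prec_{S \cup x_T^\square} u$, I would fix a formula $A$ and a finite set $S'' \subseteq S \cup x_T^\square$ with $A \rhd \bigvee_{G \in S''} \neg G \in w$, the goal being $\neg A, \square \neg A \in u$. Split $S''$ as $S_0 \cup \{\square \neg B_1, \dots, \square \neg B_k\}$, where $S_0 = S'' \cap S$ is a finite subset of $S$ and, by the definition of $x_T^\square$, each $B_i$ satisfies $B_i \rhd \bigvee_{H \in T_i} \neg H \in x$ for some finite $T_i \subseteq T$. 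Put $C^* = \bigwedge_{G \in S_0} G$ (so $\neg C^* = \bigvee_{G \in S_0}\neg G$) and $B = \bigvee_{i=1}^{k} B_i$. Using $\neg \square \neg B_i = \Diamond B_i$ together with the provable equivalence $\bigvee_i \Diamond B_i \leftrightarrow \Diamond B$, the hypothesis rewrites as $A \rhd (\neg C^* \vee \Diamond B) \in w$.

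The key step is to apply \textsf{R} to $A \rhd (\neg C^* \vee \Diamond B)$ with its ``$C$''-parameter taken to be $\neg B$. Since $\Diamond B \wedge \square \neg B$ is inconsistent, $\vdash (\neg C^* \vee \Diamond B) \wedge \square \neg B \to \neg C^*$, hence (necessitation and (J1)) $\vdash \bigl( (\neg C^* \vee \Diamond B) \wedge \square \neg B \bigr) \rhd \neg C^*$. Combining this with the instance of \textsf{R} (whose antecedent $A \rhd (\neg C^* \vee \Diamond B)$ lies in $w$), (J2), and the fact that the provably equivalent formulas $\neg(A \rhd \neg\neg B)$ and $\neg(A \rhd B)$ are interchangeable as antecedents of $\rhd$ inside a maximal consistent set, I obtain
\[
   \neg(A \rhd B) \rhd \bigvee_{G \in S_0} \neg G \in w .
\]
Since $S_0$ is a finite subset of $S$, applying $w \prec_S x$ to the formula $\neg(A \rhd B)$ gives $\neg\neg(A \rhd B) \in x$, that is, $A \rhd B \in x$.

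Finally I would push $A \rhd B \in x$ down to $u$ along $x \prec_T u$. Let $T^* = \bigcup_{i=1}^{k} T_i$, a finite subset of $T$. Weakening each $B_i \rhd \bigvee_{H \in T_i} \neg H \in x$ to $B_i \rhd \bigvee_{H \in T^*} \neg H \in x$ by (J1) and (J2) and then combining with $k-1$ applications of (J3), one gets $B \rhd \bigvee_{H \in T^*} \neg H \in x$; together with $A \rhd B \in x$ and (J2) this gives $A \rhd \bigvee_{H \in T^*} \neg H \in x$. As $T^* \subseteq T$ is finite, $x \prec_T u$ delivers $\neg A, \square \neg A \in u$, as required. (The degenerate case $k = 0$, i.e.\ $S'' \subseteq S$, is subsumed by reading an empty disjunction as $\bot$ and an empty conjunction as $\top$: then $B = \bot$, $T^* = \emptyset$, and the same chain runs through using $A \rhd \bot \in x$ and $\emptyset \subseteq T$.)

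I expect the only genuine obstacle to be locating the use of \textsf{R} in the middle step: the task is to turn a $\rhd$-fact in $w$ whose consequent mixes an ``$S$-part'' $\neg C^*$ with a ``$\Diamond$-part'' $\Diamond B$ into a $\rhd$-fact whose consequent is purely an ``$S$-part'', so that $w \prec_S x$ applies; and the device that annihilates the $\Diamond$-part is exactly that $\Diamond B \wedge \square C$ is inconsistent for $C = \neg B$, which is the shape of the consequent $B \wedge \square C$ of \textsf{R}. All remaining moves are bookkeeping with (J1)--(J3) and the definitions of $\prec_S$ and $x_T^\square$.
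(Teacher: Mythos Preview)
The paper does not actually include a proof of this lemma: it is stated with a citation to \cite{Bilkova-Goris-Joosten}, Lemma~3.10, and used as a black box in the proof of Theorem~\ref{ILR_complete}. So there is no ``paper's own proof'' to compare against here.

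That said, your argument is correct and is exactly the kind of proof one expects for this labelling lemma. The decomposition of $S''$ into an $S$-part and a part coming from $x_T^\square$, the rewriting of the consequent as $\neg C^* \vee \Diamond B$, the single application of \textsf{R} with $C = \neg B$ to kill the $\Diamond B$ disjunct, and the final push along $x \prec_T u$ via Lemma~\ref{psintaksa}-style bookkeeping are all standard and sound. The treatment of the degenerate case $k = 0$ is also fine. One cosmetic remark: when you pass from $\neg(A \rhd \neg\neg B)$ to $\neg(A \rhd B)$ as a $\rhd$-antecedent, this is justified because the two formulas are provably equivalent (via J1 and J2), hence interchangeable inside the MCS $w$; you say this, but it is worth emphasising that it is the \emph{provable equivalence} of the two $\rhd$-formulas, not merely of $B$ and $\neg\neg B$, that licenses the step.
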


\begin{theorem}\label{ILR_complete}
The logic \il{R} is complete w.r.t.\ \ilgen{R}-models.
\end{theorem}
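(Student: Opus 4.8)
The plan is to invoke Theorem \ref{glavni}: it suffices to show that for an arbitrary set $\mathcal D$ (with the standing properties), the \il{R}-structure $(W,R,\{S_w : w\in W\},\Vdash)$ for $\mathcal D$ possesses the characteristic property \kgen{R}. So fix such a structure, assume $wRxRuS_wV$, and fix a choice set $C\in\mathcal C(x,u)$; I must produce $U\subseteq V$ with $xS_wU$ and $R[U]\subseteq C$.

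\vskip 1ex
\noindent\textbf{Constructing $U$.}
Following the pattern of the \il{P\textsubscript{0}} proof, I would define $U$ as a union over labels $S$ witnessing $S_w$-demands on $x$. For each set of formulas $S$ with $w\prec_S x$, I want to find a world $z_S\in V$ such that $w\prec_S z_S$ (so that the collection of these $z_S$'s forms a valid $S_w$-successor of $x$) and such that $R[z_S]\subseteq C$. To get the latter, I would use the choice set: since $uS_xZ$ forces $Z\cap C\neq\emptyset$ for every $Z$, the set $C$ ``blocks'' all $S_x$-successors of $u$. Concretely, given $S$ with $w\prec_S x$, apply Lemma \ref{lema-ILR} to $w\prec_S x\prec_\emptyset u$ (taking $T=\emptyset$) to obtain $w\prec_{S\cup x^\square_\emptyset} u$; since $uS_wV$, there is $v\in V$ with $w\prec_{S\cup x^\square_\emptyset} v$, hence $w\prec_S v$ and $w\prec_{x^\square_\emptyset}v$. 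Now I need to pass from $v$ down to a world in $C$ while keeping the label $S$ correct with respect to $x$; the analogue of Lemma \ref{rsintaksa} (with the empty inner label) tells me that if $w\prec_{x^\square_\emptyset}v\prec z$ then $x\prec z$, i.e.\ $xRz$, so $z\in R[x]$. I then take $U=\{z_S : w\prec_S x\}$, which by construction satisfies $xS_wU$ (every demand $w\prec_S x$ is met by $z_S\in U$ with $w\prec_S z_S$).

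\vskip 1ex
\noindent\textbf{The main obstacle: forcing $R[U]\subseteq C$.}
The delicate point is choosing, for each $S$, a successor of $v$ (or of $x$) that actually lands with its whole $R$-image inside $C$. Unlike the \il{P\textsubscript{0}} case where one only needed a single world of $R[v]$ to meet a fixed set $Z$, here I need $R[z_S]\subseteq C$, a much stronger constraint. The key idea is that $C$ being a choice set means $C$ intersects \emph{every} $S_x$-successor-set of $u$; quasi-transitivity/monotonicity of $S_x$ then let me argue that the complement of $C$ within $R[x]$ cannot contain an $S_x$-successor-set of $u$, which (via the labelling lemmas, run now relative to $x$ rather than $w$) should let me find an MCS $z_S$ with $x\prec z_S$, $w\prec_S z_S$, and $z_S$ ``below $C$'' in the sense that every $R$-successor of $z_S$ lies in $C$. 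I expect this step — translating the semantic statement ``$C$ is a choice set'' into a syntactic statement about which formulas are forced at the candidate worlds, and then applying Lemma \ref{nedostaci}/Lemma \ref{problemi}-style constructions to realise a world whose $R$-cone is trapped in $C$ — to be the crux of the proof, and the place where the principle \textsf{R} itself (not just its consequences via Lemma \ref{lema-ILR}) is genuinely used.

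\vskip 1ex
\noindent\textbf{Wrap-up.}
Once $U$ is built, verifying $U\subseteq V$, $U\subseteq R[x]$ (hence $xS_wU$ is well-typed), $xS_wU$, and $R[U]\subseteq C$ are all immediate from the construction, and the characteristic property \kgen{R} follows. Theorem \ref{glavni} then yields completeness of \il{R} w.r.t.\ \ilgen{R}-models. I would also remark that, exactly as for \il{P\textsubscript{0}}, this simultaneously gives the finite model property once $\mathcal D$ is taken finite, since $W$ is then finite.
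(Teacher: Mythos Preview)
Your plan (invoke Theorem \ref{glavni}, verify \kgen{R} on the \il{R}-structure) is correct, and you correctly locate the crux: for each $S$ with $w\prec_S x$ you need some $v\in V$ with $w\prec_S v$ and $R[v]\subseteq C$. But your sketch does not reach this. Applying Lemma \ref{lema-ILR} only with $T=\emptyset$ gives a $v\in V$ with $w\prec_{S\cup x_\emptyset^\square}v$; nothing about that forces $R[v]\subseteq C$. Your proposal to ``pass from $v$ down to a world in $C$'' via Lemma \ref{rsintaksa} would produce a world in $R[v]$, not in $V$, so the resulting $U$ would violate the requirement $U\subseteq V$ that \kgen{R} imposes --- contrary to what you claim in the wrap-up. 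No appeal to Lemma \ref{problemi} or Lemma \ref{nedostaci} will help here: those lemmas manufacture new MCS's, whereas you must stay inside the given set $V$.

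The missing idea is a contradiction argument that exercises Lemma \ref{lema-ILR} with \emph{every} $T$ such that $x\prec_T u$, not only $T=\emptyset$. Fix $S$ with $w\prec_S x$ and suppose that every $v\in V$ with $w\prec_{S\cup x_\emptyset^\square}v$ has some $z_v\in R[v]\setminus C$; collect these into $Z$. For arbitrary $T$ with $x\prec_T u$, Lemma \ref{lema-ILR} on $w\prec_S x\prec_T u$ gives $w\prec_{S\cup x_T^\square}u$, and then $uS_wV$ yields $v\in V$ with $w\prec_{S\cup x_T^\square}v$. Since $x_\emptyset^\square\subseteq x_T^\square$, this $v$ has an associated $z_v\in Z$, and Lemma \ref{rsintaksa} applied to $w\prec_{x_T^\square}v\prec z_v$ gives $x\prec_T z_v$. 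Hence $uS_xZ$, so the choice-set condition forces $Z\cap C\neq\emptyset$, a contradiction. The conceptual point is that $C\in\mathcal C(x,u)$ is a statement about \emph{all} $S_x$-successor sets of $u$, which in the \il{R}-structure unfolds to a statement about \emph{all} labels $T$ with $x\prec_T u$; a single instance $T=\emptyset$ cannot exploit it. (Separately, your final remark about FMP is incorrect: finiteness of $\mathcal D$ does not make $W$ finite --- there are in general continuum-many \il{R}-MCS's containing $G\wedge\square\neg G$ for some $G\in\mathcal D$. FMP is obtained independently, via filtration.)
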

\begin{proof}
Given Theorem \ref{glavni}, it suffices to show that for any set $\mathcal{D}$, the \il{R}-structure for $\mathcal D$ possesses the property \kgen{R}. 
Let $(W,R,\{ S_w:w\in W\},\Vdash)$ be the \il{R}-structure for $\mathcal{D}.$

Assume $wRxRu S_w V$ and $C \in \mathcal{C}(x, u)$. 
We are to show that $(\exists U\subseteq V) (xS_w U  \ \& \ R[U]\subseteq C)$.
We will first prove an auxiliary claim: 
\[
    (\forall S) \big(w \prec_S x\ \Rightarrow \ (\exists v \in V) (w \prec_{S \cup 
    x_\emptyset^\square} v \ \& \ R[v] \subseteq C)\big) .
\]
So, let $S$ be arbitrary such that $w \prec_S x$, and suppose (for a contradiction) that for every 
$v \in V$ with $w \prec_{S \cup x_\emptyset^\square} v$, we 
have $R[v] \nsubseteq C$, that is, there is some $z_v \in R[v] \setminus C$. 
Let $Z = \{z_v : v \in V, w \prec_{S \cup x_\emptyset^\square} v \}.$ We claim that $u S_x Z$. 
Let $T$  be arbitrary such that $x \prec_T u$, and we should prove that there exists $z\in Z$ such 
that $x\prec_T z.$
From $w \prec_S x \prec_T u$ and Lemma \ref{lema-ILR} it 
follows that $w \prec_{S \cup x_T^\square} u$. Since $u S_w V$, there is $v \in V$ with 
$w \prec_{S \cup x_T^\square} v$. 
Now, $x_\emptyset^\square\subseteq x_T^\square$ and Lemma \ref{lema3.2} imply  
$w \prec_{S \cup x_\emptyset^\square} v$, so there is a world $z_v\in Z$ as defined earlier.
Furthermore, $w \prec_{x_T^\square} v\prec z_v$ and Lemma \ref{rsintaksa} imply $x \prec_T z_v$.
To prove $u S_x Z$ it remains to verify that $Z \subseteq R[x]$. Let $z_v \in Z$ be arbitrary and apply Lemma \ref{lema3.2} and Lemma \ref{rsintaksa} as before.
Now, $u S_x Z$ and $C \in \mathcal{C}(x, u)$ imply $C \cap Z \neq \emptyset$, contradicting the 
definition of $Z$. This concludes the proof of the auxiliary claim.

\vskip 2ex
Let $U=\{ v\in V: w\prec_{ x_\emptyset^\square} v \  
\mbox{ and } \ R[v]\subseteq C\}$.
Auxiliary claim implies $U \neq \emptyset$. If $w\prec_S x$, auxiliary claim implies there is 
 $v\in U$ such that $w\prec_{S\cup x_\emptyset^\square} v$ and $R[v] \subseteq C$, so $v \in U$. Thus $xS_w U$. It is clear that $R[U]\subseteq C$.
\end{proof}

\section{The logics \il{W} and \il{W}*}

To prove that \il{W} is complete, one could try to find a sufficiently strong ``labelling lemma'' and utilise Definition \ref{ilx-struktura}. One candidate might be the following condition:
\[
    w \prec_S u \ \Rightarrow \ (\exists G \in \mathcal{D}) w \prec_{S \cup \{ \square \neg G \} } u \text{ and } G \in u,
\]
where $\mathcal{D}$ is finite, closed under subformulas and such that each $w \in W$ contains $A_w$ and $\Box \neg A_w$ for some $A_w \in \mathcal D$. 

Since we weren't successful in finding a sufficiently strong labelling lemma for \il{W}, we will use a modified version of Definition \ref{ilx-struktura} to work with  \il{W} and its extensions. This way we won't require a labelling lemma, but we lose generality in the following sense. To prove the completeness of \il{XW}, for some $X$, it no longer suffices to simply show that the structure defined in Definition \ref{ilx-struktura} has the required characteristic property (when each world is an \il{X}-MCS). Instead, the characteristic property of \il{X} has to be shown to hold even on the modified structure. So, to improve compatibility with proofs based on Definition \ref{ilx-struktura}, we should prove the completeness of $\il{W}$ with as similar definition to Definition \ref{ilx-struktura} as possible. That is what we do in the remainder of this section. This approach turns out to be good enough for \il{W*} (\il{M\textsubscript{0}W}). We didn't succeed in using it to prove the completeness of \il{RW}. However, to the best of our knowledge, \il{RW} might not be complete at all. 

\vskip 2ex
In \cite{Goris-Joosten-11} the (complement of the) characteristic class for \il{W} is given by the condition Not-\textsf{W} such that for 
any generalized Veltman frame $\mathfrak{F}$ we have that
$$ \mathfrak{F}\models \text{Not-}\textsf{W} \ \text{ if and only if } \ 
\mathfrak{F}\nVdash \textsf{W}.$$
Another condition is \kgen{W} from \cite{Mikec-Perkov-Vukovic-17}:
\[
    uS_w V \Rightarrow (\exists V' \subseteq V)(u S_w V' \ \& \ R[V'] \cap 
    S_w^{-1}[V] = 
    \emptyset ). 
\]
We will use (this formulation of) \kgen W in what follows. 
In the proof of the completeness of logic \il{W} we will use the following two lemmas. In what follows, \il{WX} denotes an arbitrary extension of \il{W}.

\begin{lemma}[\cite{Bilkova-Goris-Joosten}, Lemma 3.12]
\label{problemi-W}
Let $w$ be an \il{WX}-MCS, and $B$ and $C$  formulas such that $\neg (B\rhd C)\in w.$ 
Then there is an \il{WX}-MCS $u$ such that $w\prec_{\{ \square\neg B,\neg C\}} u$ and  $B\in u.$
\end{lemma}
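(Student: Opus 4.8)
The plan is to reduce the statement to a purely syntactic consistency claim. By Definition~\ref{def-smart-label} and Definition~\ref{def-smart-label-sets}, $w \prec_{\{\square\neg B,\neg C\}} u$ holds exactly when $w^{\boxdot}_{\{\square\neg B,\neg C\}}\subseteq u$, so it suffices to show that $\Gamma := w^{\boxdot}_{\{\square\neg B,\neg C\}}\cup\{B\}$ is \il{WX}-consistent; any \il{WX}-MCS extending $\Gamma$ then serves as the desired $u$. So suppose $\Gamma$ is inconsistent. Then there are finitely many formulas $A_1,\dots,A_n$, each satisfying $A_i\rhd\bigvee_{G\in S_i'}\neg G\in w$ for some finite $S_i'\subseteq\{\square\neg B,\neg C\}$, with $\{B,\neg A_1,\square\neg A_1,\dots,\neg A_n,\square\neg A_n\}\vdash\bot$. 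Put $A=A_1\vee\cdots\vee A_n$ (read as $\bot$ when $n=0$). Since $\neg A\vdash\neg A_i$ and, via \textsf{K} and necessitation, $\square\neg A\vdash\square\neg A_i$ for each $i$, we get $\{B,\neg A,\square\neg A\}\vdash\bot$. On the interpretability side, for every finite $S'\subseteq\{\square\neg B,\neg C\}$ one has $\vdash\square\bigl(\bigvee_{G\in S'}\neg G\to\Diamond B\vee C\bigr)$, so \textsf{J1} and \textsf{J2} give $A_i\rhd\Diamond B\vee C\in w$ for each $i$, and $n-1$ applications of \textsf{J3} give $A\rhd\Diamond B\vee C\in w$.

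Next I would convert the inconsistency into an interpretability statement about $B$. The condition $\{B,\neg A,\square\neg A\}\vdash\bot$ is equivalent to $\vdash B\to A\vee\Diamond A$, hence $\vdash\square(B\to A\vee\Diamond A)$ and, by \textsf{J1}, $B\rhd A\vee\Diamond A\in w$. Since $A\rhd A$ is a theorem and $\Diamond A\rhd A$ is \textsf{J5}, \textsf{J3} yields $A\vee\Diamond A\rhd A\in w$, so $B\rhd A\in w$ by \textsf{J2}; chaining once more with $A\rhd\Diamond B\vee C\in w$ via \textsf{J2} gives $B\rhd\Diamond B\vee C\in w$.

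The crucial — and only essentially \il{W}-specific — step is now to apply the principle \textsf{W} to $B\rhd\Diamond B\vee C$, obtaining $B\rhd(\Diamond B\vee C)\wedge\square\neg B\in w$. Since $\Diamond B$ abbreviates $\neg\square\neg B$, we have $\vdash(\Diamond B\vee C)\wedge\square\neg B\to C$, so right monotonicity of $\rhd$ (\textsf{J1} together with \textsf{J2}) gives $B\rhd C\in w$, contradicting $\neg(B\rhd C)\in w$ and the consistency of $w$. Hence $\Gamma$ is consistent, which completes the argument.

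The main obstacle is spotting this last move: the payoff of \textsf{W} is precisely that it lets us re-insert $\square\neg B$ into the right-hand side of $B\rhd(\Diamond B\vee C)$, which immediately annihilates the $\Diamond B$ disjunct and collapses the consequent to $C$. Everything else is routine bookkeeping with the \textsf{J}-axioms; the only mild care needed in the reduction step is handling the degenerate case $n=0$ (where $\vdash\neg B$ and $B\rhd C$ is already a theorem), which is also subsumed by reading $A$ as the empty disjunction $\bot$.
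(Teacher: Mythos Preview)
Your proof is correct. The paper does not give its own argument for this lemma---it simply cites \cite{Bilkova-Goris-Joosten}, Lemma~3.12---and your reduction to the consistency of $w^{\boxdot}_{\{\square\neg B,\neg C\}}\cup\{B\}$ followed by the \textsf{W}-collapse of $B\rhd\Diamond B\vee C$ to $B\rhd C$ is exactly the standard route for this kind of existence lemma.
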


\begin{lemma}[\cite{Bilkova-Goris-Joosten}, Lemma 3.13]
\label{nedostaci-W}
Let $w$ and $u$ be some \il{WX}-MCS, $B$ and $C$ some formulas, and $S$ a set of formulas such that 
$B\rhd C\in w,$ $w\prec_S u$  and $B\in u.$
Then there is an \il{WX}-MCS $v$ such that $w\prec_{S\cup \{ \square\neg B\} } v$ and 
$C,\square\neg C\in v.$
\end{lemma}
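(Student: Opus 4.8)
The plan is to prove the lemma by the usual Lindenbaum strategy: pin down a set of formulas that a witnessing $v$ must contain, show that set is \il{WX}-consistent, and take $v$ to be any maximal consistent extension. The natural candidate here is
\[
   \Sigma \;:=\; \{C,\square\neg C\}\ \cup\ w_{S\cup\{\square\neg B\}}^{\boxdot}.
\]
If $\Sigma$ is \il{WX}-consistent, then any \il{WX}-MCS $v\supseteq\Sigma$ satisfies $C,\square\neg C\in v$ and, since $w_{S\cup\{\square\neg B\}}^{\boxdot}\subseteq v$, also $w\prec_{S\cup\{\square\neg B\}}v$, which is exactly the conclusion. (That $\{C,\square\neg C\}$ is not already inconsistent is guaranteed by Lemma \ref{nedostaci}, applied with ambient logic \il{WX}; so the whole work lies in proving the consistency of $\Sigma$.)

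I would establish this by contradiction. Assuming $\Sigma\vdash\bot$, only finitely many members of $w_{S\cup\{\square\neg B\}}^{\boxdot}$ take part, each coming from a formula $A_i\rhd\bigvee_{G\in S_i'}\neg G\in w$ with $S_i'\subseteq S\cup\{\square\neg B\}$ finite. The first step is a \emph{consolidation}: enlarge each right-hand side so that $\square\neg B$ appears in it (allowed, as strengthening the consequent of a $\rhd$-formula is valid via J1 and J2), rewrite the resulting disjunct $\neg\square\neg B$ as $\Diamond B$, and merge the $A_i$ into $A^*:=\bigvee_i A_i$ and the $S_i'$ into a single finite $S^*\subseteq S$ by repeated use of J3; merging the boxed premises is pure propositional logic together with K. This produces a single formula $A^*$ and a finite $S^*\subseteq S$ such that
\[
   A^*\rhd\bigvee_{G\in S^*}\neg G\vee\Diamond B\ \in\ w
   \qquad\text{and}\qquad
   C,\square\neg C,\neg A^*,\square\neg A^*\ \vdash\ \bot .
\]

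The core is then a short sequence of $\rhd$-manipulations inside $w$. Write $D:=\bigvee_{G\in S^*}\neg G$ and $E:=A^*\vee\Diamond B$. From the right-hand display, $\vdash C\wedge\square\neg C\to E\vee\Diamond E$, so by necessitation and J1 we get $C\wedge\square\neg C\rhd E\vee\Diamond E\in w$; combining $E\rhd E$ (J1) and $\Diamond E\rhd E$ (J5) by J3 and then J2 gives $C\wedge\square\neg C\rhd E\in w$. Applying \textsf{W} to the theorem $C\rhd C$ yields $C\rhd C\wedge\square\neg C\in w$, hence $C\rhd E\in w$ by J2, and chaining with the hypothesis $B\rhd C\in w$ gives $B\rhd E\in w$, i.e.\ $B\rhd A^*\vee\Diamond B\in w$. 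Now feeding in the left-hand display together with $\Diamond B\rhd D\vee\Diamond B$ (J1) and J3, we obtain $A^*\vee\Diamond B\rhd D\vee\Diamond B\in w$, so $B\rhd D\vee\Diamond B\in w$ by J2. Here comes the second, decisive application of \textsf{W}: $B\rhd(D\vee\Diamond B)\wedge\square\neg B\in w$. Since $\Diamond B\wedge\square\neg B$ is contradictory, $(D\vee\Diamond B)\wedge\square\neg B$ is equivalent to $D\wedge\square\neg B$, so $B\rhd D\wedge\square\neg B\in w$, and weakening the consequent to $D$ gives $B\rhd\bigvee_{G\in S^*}\neg G\in w$ with $S^*\subseteq S$ finite. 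Finally, $w\prec_S u$ forces $\neg B,\square\neg B\in u$, contradicting $B\in u$.

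The step I expect to be the main obstacle — or at least the one requiring real care — is the interaction between the consolidation and the two uses of \textsf{W}. One has to ensure that after the second \textsf{W}-step the spurious disjunct $\Diamond B$ is genuinely absorbed (because $\Diamond B\wedge\square\neg B$ collapses to $\bot$), so that the final $\rhd$-formula has a consequent built only from elements of $S$, not of $S\cup\{\square\neg B\}$. This is precisely what lets the \emph{weaker} hypothesis $w\prec_S u$ — and not some assumption about $w\prec_{S\cup\{\square\neg B\}}u$, which we do not have — deliver the contradiction. Everything else is routine propositional reasoning plus the schemata of \il{}, and since the argument uses only \il{} together with \textsf{W}, it is uniform over all extensions \il{WX}.
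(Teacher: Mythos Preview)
Your argument is correct. The paper does not give its own proof of this lemma; it simply cites \cite{Bilkova-Goris-Joosten}, Lemma 3.13, so there is nothing to compare against beyond checking that your reasoning goes through --- and it does. The key moves (consolidating the finitely many $A_i\rhd\bigvee_{G\in S_i'}\neg G\in w$ into a single $A^*\rhd D\vee\Diamond B$, then using \textsf{W} twice to first pass from $C$ to $C\wedge\square\neg C$ and later to absorb the spurious disjunct $\Diamond B$ via $(D\vee\Diamond B)\wedge\square\neg B\leftrightarrow D\wedge\square\neg B$) are exactly what is needed, and your final appeal to $w\prec_S u$ with $S^*\subseteq S$ is sound. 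One small remark: the parenthetical invoking Lemma~\ref{nedostaci} to argue that $\{C,\square\neg C\}$ is consistent is unnecessary, since consistency of the larger set $\Sigma$ is what you actually establish and that subsumes it.
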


Given a binary relation $R$, let $\dot{R}[x] =R[x]\cup \{ x\}.$ 


\begin{definition}
Let \textsf{X} be \textsf{W} or \textsf{W*}. We say that $\mathfrak{M} = (W, R, \{S_w : w \in W\}, \Vdash)$ is the \il{X}-structure for a set of formulas
$\mathcal{D}$ if:
        \begin{align*}
                W &= \{ w : w \text{ is an \il{X}-MCS and for some } G \in \mathcal{D}, 
                \ G \wedge \square \neg G \in w \};\\
                wRu &\Leftrightarrow w \prec u;\\  
                uS_w V &\Leftrightarrow wRu, V \subseteq R[w] \text{ and one of the following 
                holds:}\\ 
                & (a) \ \ \ V \cap \dot{R}[u] \neq \emptyset; \\
                & (b) \ \ \ (\forall S)(w \prec_S u \Rightarrow (\exists v \in V) (\exists G \in 
                \mathcal{D} \cap \bigcup\dot{R}[u]) \ w \prec_{S\cup \{\square\neg G\} } v);\\
                w\Vdash p&\Leftrightarrow p\in w.
        \end{align*}
\end{definition}

\begin{lemma}
Let \textsf{X} be \textsf{W} or \textsf{W*}. \il{X}-structure $\mathfrak{M}$ for $\mathcal{D}$ is a generalized Veltman model. Furthermore, the 
following holds:
$$ \mathfrak{M},w\Vdash G \ \mbox{ if and only if }\ G\in w,$$
for each $G\in\mathcal{D}$ and $w\in W.$ 
\label{lemma-main-w}
\end{lemma}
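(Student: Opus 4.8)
The structure of the argument will parallel Lemma \ref{lemma-main}, but with the modified definition of $S_w$ that now includes the disjunctive clause $(a)$/$(b)$. First I would verify that $\mathfrak{M}$ is a generalized Veltman model. Non-emptiness of $W$ follows from Lemma \ref{lema-nepraznost} and $\top \in \mathcal{D}$, transitivity of $R$ is immediate, and converse well-foundedness follows exactly as before by the $|\mathcal{D}|$-bound argument (a long $R$-chain produces $xRy$ with $G, \Box\neg G$ in both, contradicting $x \prec y$). For the $S_w$ conditions: clause $(a)$ gives quasi-reflexivity directly (take $V = \{u\}$, so $V \cap \dot R[u] = \{u\} \neq \emptyset$), and condition $(d)$ (if $wRuRv$ then $uS_w\{v\}$) follows from clause $(a)$ as well since $v \in R[u] \subseteq \dot R[u]$. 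Monotonicity must be checked for each clause: enlarging $V$ preserves both $V \cap \dot R[u] \neq \emptyset$ and the existential in $(b)$, provided the enlargement stays inside $R[w]$. Non-emptiness of $V$ in a pair $uS_wV$: clause $(a)$ forces $V$ nonempty trivially; for clause $(b)$, apply it with $S = \emptyset$ (using $w \prec_\emptyset u$, which holds since $wRu$) to get some $v \in V$.

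\textbf{Quasi-transitivity} is the step I expect to require the most care, because of the case split. Suppose $x S_w V$ and $v S_w U_v$ for all $v \in V$, and put $U = \bigcup_v U_v$; we must show $x S_w U$. Clearly $U \subseteq R[w]$. If $x S_w V$ holds by clause $(a)$, pick $v_0 \in V \cap \dot R[x]$; then $v_0 S_w U_{v_0}$, and I would argue that whichever clause witnesses $v_0 S_w U_{v_0}$, it lifts to witness $x S_w U$ — if clause $(a)$, then $U_{v_0} \cap \dot R[v_0] \neq \emptyset$ and $\dot R[v_0] \subseteq \dot R[x]$ (using $v_0 \in \dot R[x]$ and transitivity of $R$), so $U \cap \dot R[x] \neq \emptyset$; if clause $(b)$ for $v_0 S_w U_{v_0}$, one uses that the relevant labelling transfers, but note $v_0 \in \dot R[x]$ means $\mathcal{D} \cap \bigcup \dot R[v_0] \subseteq \mathcal{D} \cap \bigcup \dot R[x]$, and $w \prec_S x$ implies (via $v_0 \in \dot R[x]$) the needed $\prec_S v_0$ to kick off clause $(b)$. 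The remaining case is $x S_w V$ by clause $(b)$: take any $S$ with $w \prec_S x$; clause $(b)$ gives $v \in V$ and $G \in \mathcal{D} \cap \bigcup\dot R[x]$ with $w \prec_{S \cup \{\Box\neg G\}} v$; then $v S_w U_v$ — if by clause $(a)$ this needs handling via the fact that clause $(a)$ makes $U_v$ directly accessible, and I would need the monotonicity/combination to re-derive a clause-$(b)$ witness for $x$, or fall back to clause $(a)$ for $x$ if $v \in \dot R[x]$; if $v S_w U_v$ by clause $(b)$, apply it with $S \cup \{\Box\neg G\}$ in place of $S$ to get $u \in U_v$ and $G' \in \mathcal{D} \cap \bigcup \dot R[v]$ with $w \prec_{S \cup \{\Box\neg G, \Box\neg G'\}} u$, and then I must produce a single $G'' \in \mathcal{D} \cap \bigcup\dot R[x]$ doing the job — this is exactly where the set-membership bookkeeping is delicate and where one uses that $\Box\neg G \in w_{S}^{\ldots}$-style reasoning or Lemma \ref{lema3.2}(a) to absorb one of the two boxed conjuncts.

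\textbf{The truth lemma} then goes by induction on complexity of $G \in \mathcal{D}$, the only interesting case being $G = B \rhd C$. For the forward direction, assume $B \rhd C \in w$, $wRu$, $u \Vdash B$; by IH $B \in u$; I claim $u S_w [C]_w$. To verify clause $(b)$ (or possibly $(a)$), take $S$ with $w \prec_S u$ and apply Lemma \ref{nedostaci-W}: since $B \rhd C \in w$, $w \prec_S u$, $B \in u$, we get $v$ with $w \prec_{S \cup \{\Box\neg B\}} v$ and $C, \Box\neg C \in v$; by IH $v \Vdash C$, so $v \in [C]_w$, and since $\neg(C \rhd C)$... actually here I need $\Box\neg B$ to be of the form $\Box\neg G$ for some $G \in \mathcal{D} \cap \bigcup \dot R[u]$: taking $G = B$, we have $B \in \mathcal{D}$ (closure), and $B \in u$ so $B \in \bigcup\dot R[u]$ since $u \in \dot R[u]$. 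That closes clause $(b)$. For the converse, assume $B \rhd C \notin w$; Lemma \ref{problemi-W} yields $u$ with $w \prec_{\{\Box\neg B, \neg C\}} u$ and $B \in u$, so $wRu$ and (IH) $u \Vdash B$; now suppose $u S_w V$ and we must show $V \nVdash C$. If $u S_w V$ holds by clause $(a)$: pick $v_0 \in V \cap \dot R[u]$; if $v_0 = u$ then $\neg C \in u$ (since $\neg C$ belongs to the finite set in the subscript, apply Lemma \ref{lema3.2}(c)), so (IH) $v_0 \nVdash C$; if $v_0 \in R[u]$, then from $w \prec_{\{\Box\neg B, \neg C\}} u \prec v_0$ and Lemma \ref{lema3.2}(b) we get $w \prec_{\{\Box\neg B, \neg C\}} v_0$, hence $\neg C \in v_0$ and $v_0 \nVdash C$. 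If $u S_w V$ holds by clause $(b)$: apply it with $S = \{\Box\neg B, \neg C\}$, which is legitimate since $w \prec_{\{\Box\neg B, \neg C\}} u$, to obtain $v \in V$ and $G \in \mathcal{D} \cap \bigcup\dot R[u]$ with $w \prec_{\{\Box\neg B, \neg C\} \cup \{\Box\neg G\}} v$; then by Lemma \ref{lema3.2}(a),(c), $\neg C \in v$, so (IH) $v \nVdash C$. In all cases $V \nVdash C$, completing the induction. The genuine obstacle, as noted, is quasi-transitivity: keeping track of which disjunct holds at each stage and, in the clause-$(b)$–on–clause-$(b)$ subcase, collapsing the accumulated boxed formulas $\Box\neg G, \Box\neg G'$ back to a single admissible $\Box\neg G''$ with $G'' \in \mathcal{D} \cap \bigcup\dot R[x]$ — I expect this to use that $G' \in \bigcup\dot R[v]$ together with $w \prec_{\{\Box\neg G\}} v$ propagating $G'$'s accessibility back toward $x$, much as in the $\dot R$-manipulations of Lemma \ref{rsintaksa}.
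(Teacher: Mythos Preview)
Your plan matches the paper's proof almost step for step: the easy frame conditions, the four-way case split for quasi-transitivity, and the truth lemma via Lemma~\ref{problemi-W} and Lemma~\ref{nedostaci-W} are all handled the same way. The one place you over-think is the $(b)$--$(b)$ subcase of quasi-transitivity. You arrive at $w \prec_{S \cup \{\Box\neg G,\, \Box\neg G'\}} u$ with $G \in \mathcal{D} \cap \bigcup\dot R[x]$ and $G' \in \mathcal{D} \cap \bigcup\dot R[v]$, and then worry about ``collapsing'' to a single $G''$ or ``propagating $G'$'s accessibility back toward $x$''. None of that is needed: simply take $G'' = G$ and apply Lemma~\ref{lema3.2}(a) to discard $\Box\neg G'$, obtaining $w \prec_{S \cup \{\Box\neg G\}} u$ directly. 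The point is that clause $(b)$ only asks for \emph{some} $G'' \in \mathcal{D}\cap\bigcup\dot R[x]$, and the original $G$ already qualifies; the second formula $\Box\neg G'$ is dead weight and can be dropped by label monotonicity. The $(b)$--$(a)$ subcase is similarly resolved: from $w \prec_{S\cup\{\Box\neg G\}} v$ and $u \in U_v \cap \dot R[v]$, Lemma~\ref{lema3.2}(b) (or $u=v$) gives $w \prec_{S\cup\{\Box\neg G\}} u$, so again the same $G$ works. There is no genuine obstacle here, and no need for anything like Lemma~\ref{rsintaksa}.
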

\begin{proof}
Let us first verify that the \il{X}-structure $\mathfrak{M} = (W, R, \{ S_w : w \in W \}, \Vdash )$ 
for $\mathcal{D}$ is a generalized Veltman model. 
All the properties, except for quasi-transitivity, have easy proofs (see the proof of Lemma 
\ref{lemma-main}).

Let us prove quasi-transitivity. Assume $u S_w V$, and $v S_w U_v$ for all $v \in V$. 
Put $U = \bigcup_{v \in V} U_v$. We claim that $u S_w U$. Clearly $U \subseteq R[w]$. 
To prove $uS_w U$ we will distinguish the cases (a) and (b) from the definition of the 
relation $S_w$ for $uS_w V.$

In the case (a), we have $v_0 \in V$ for some $v_0 \in \dot{R}[u]$. 
We will next distinguish two cases from the definition of $v_0 S_w U_{v_0}$. 

In the case (aa) we have $x \in U_{v_0}$ for some $x \in \dot{R}[v_0]$. 
Since $v_0 \in \dot{R}[u]$, we then have $x \in \dot{R}[u]$. 
Since $x \in U_{v_0} \subseteq U$, then $U\cap \dot{R}[u]\neq \emptyset.$
So, we have $uS_w U$, as required. 

In the case (ab) we have:
\[(\forall S)(w \prec_S v_0 \Rightarrow (\exists x \in U_{v_0})(\exists  G \in \mathcal{D} 
\cap \bigcup\dot{R}[v_0])\ w \prec_{S\cup \{\square\neg G\} } x).\]
To prove $uS_w U$ in this case, we will use the case (b) from the definiton of the relation $S_w$. 
Assume $w \prec_S u$. Then we have $w \prec_S u \prec v_0$ or $w \prec_S u = v_0$. 
Either way, possibly using Lemma Lemma \ref{lema3.2}, we have $w \prec_S v_0$, and so there are $x \in U_{v_0}$ and 
$G \in \mathcal{D} \cap \bigcup\dot{R}[v_0]$ with $w \prec_{S\cup \{\square\neg G\} } x$. 
Since $uRv_0$ or $u=v_0$, we have $\dot{R}[v_0] \subseteq \dot{R}[u].$ 
So, the claim follows.

\vskip 2ex
In the case (b), we have:
\[
      (\forall S)(w \prec_S u \Rightarrow (\exists v \in V)(\exists  G \in \mathcal{D} \cap 
      \bigcup\dot{R}[u]) \ w \prec_{S\cup \{\square\neg G\} } v).
		\]
To prove $uS_w U$ we will use the case (b) from the definition of the relation $S_w$. 
Assume $w \prec_S u$. Then there are $v_0 \in V$ and $G \in \mathcal{D} \cap \bigcup\dot{R}[u]$ such 
that $w \prec_{S\cup \{\square\neg G\} } v_0$. 
From $v_0 \in V$ it follows that $v_0 S_w U_{v_0}$. 
We will next distinguish the possible cases in the definition of $v_0 S_w U_{v_0}$. 

In the first case (ba) we have $U_{v_0}\cap \dot{R}[v_0]\neq \emptyset,$ i.e.\ there is 
$x \in U_{v_0} \cap \dot{R}[v_0]$. 
Then $w \prec_{S\cup \{\square\neg G\} } v_0 = x$ or $w \prec_{S\cup \{\square\neg G\} } v_0 \prec x$. 
In both cases (possibly using Lemma \ref{lema3.2}) we have $w \prec_{S\cup \{\square\neg G\} } x$. 

In the case (bb):
\[
         (\forall S')(w \prec_{S'} v_0 \Rightarrow (\exists x \in U_{v_0})(\exists  G' \in \mathcal{D} 
         \cap \bigcup\dot{R}[v_0])\ w \prec_{S'\cup \{\square\neg G'\} } x).  
\]
From $w \prec_{S\cup \{\square\neg G\} } v_0$ it follows that there are some 
$x \in U_{v_0}$ and $G' \in \mathcal{D} \cap \bigcup\dot{R}[v_0]$ such that 
$w \prec_{S\cup \{\square\neg G, \square\neg G'\} } x$. 
Lemma \ref{lema3.2} implies $w \prec_{S\cup \{\square\neg G\} } x$, as required.

\vskip 3ex
We claim that for each formula $G\in\mathcal{D}$ and each world $w\in W$ the following holds:
\[
    \mathfrak{M},w\Vdash G \ \mbox{ if and only if } \ G\in w.
\]
The claim is proved by induction on the complexity of $G$. 
The only non-trivial case is when $G=B\rhd C.$

Assume $B \rhd C \in w,$ \ $wR u$ and $u\Vdash B$. Induction hypothesis implies $B\in u.$
We claim that $u S_w [C]_w$. Clearly $[C]_w \subseteq R[w]$. 
Assume $w \prec_S u$. Lemma \ref{nedostaci-W} implies that there is an \il{X}-MCS $v$ with 
$w \prec_{S \cup \{ \square \neg B \} } v$ and $C, \square \neg C\in v$ (thus $v \in W$). 
Since $C \in v$, the induction hypothesis implies $v \Vdash C.$ 
Since $w \prec v$, i.e.\ $wRv$, then $v \in [C]_w$. 
Now, $B \in \mathcal{D}$ and $B\in u$ imply $B \in \mathcal{D} \cap \bigcup\dot{R}[u]$. 
Thus $uS_w [C]_w$ holds, by the clause (b) from the definition. 

To prove the converse, assume $B \rhd C \notin w$. 
Since $w$ is \il{X}-MCS,  $\neg(B\rhd C)\in w.$
Lemma \ref{problemi-W} implies there is $u$ with 
$w \prec_{\{ \square \neg B, \neg C\}} u$ and  $B \in u.$ 
Lemma \ref{lema3.2} implies $\square \neg B\in u.$ 
So, $B\wedge \square \neg B\in u$; thus $u \in W.$ 
The induction hypothesis implies $u \Vdash B$.
Let $V\subseteq R[w]$ be such that $u S_w V$.
We will find a world $v\in V$ such that $w\prec_{\{ \neg C\}} v$.  
We will distinguish the cases (a) and (b) from the definition of the relation $S_w$. 
Consider the case (a). 
Let $v$ be an arbitrary node in $V\cap \dot{R}[u]$. If $v = u$, clearly 
$w\prec_{\{\square\neg B,\neg C\}} v$.
If $uRv$, then we have $w\prec_{\{ \square\neg B,\neg C\}} u\prec v.$ 
Lemma \ref{lema3.2} implies $w\prec_{\{ \square\neg B,\neg C\}} v$. 
Consider the case (b). 
From $w\prec_{\{ \square\neg B,\neg C\}} u$ and the definition of $S_w$ it follows that there is $v \in V$ and a formula $D \in \mathcal D$ such that $w \prec_{\{\square \neg B, \neg C,\square\neg D\}} v$.
In both cases we have $w\prec_{\{ \neg C\}} v$; thus $C \notin v$.
Induction hypothesis implies $v \nVdash C$; whence $V \nVdash C$, as required.
\end{proof}

\begin{theorem}
The logic \il{W} is complete w.r.t.\ \ilgen{W}-models.
\label{tm-ilw}
\end{theorem}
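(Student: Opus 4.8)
The plan is to mimic the argument of Theorem~\ref{glavni}, but with Lemma~\ref{lemma-main-w} in place of Lemma~\ref{lemma-main} (Theorem~\ref{glavni} itself does not apply here, since it is stated for the structure of Definition~\ref{ilx-struktura}). Given a formula $A$ with $\il{W}\nvdash\neg A$, Lemma~\ref{lema-nepraznost} yields an \il{W}-MCS $w$ with $A\wedge\Box\neg A\in w$; fix a finite $\mathcal D$ with the usual closure properties and $A,\top\in\mathcal D$, and let $\mathfrak M=(W,R,\{S_w:w\in W\},\Vdash)$ be the \il{W}-structure for $\mathcal D$. By Lemma~\ref{lemma-main-w}, $\mathfrak M$ is a generalized Veltman model satisfying the truth lemma on $\mathcal D$; since $A\wedge\Box\neg A\in w$ gives $w\in W$, we obtain $\mathfrak M,w\nVdash\neg A$. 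Hence the only thing left to prove is that the \il{W}-structure for an arbitrary $\mathcal D$ possesses \kgen{W}, i.e.\ that $uS_wV$ implies the existence of $V'\subseteq V$ with $uS_wV'$ and $R[V']\cap S_w^{-1}[V]=\emptyset$ (here $S_w^{-1}[V]=\{y:yS_wV\}$, which by monotonicity also equals $\{y:yS_wZ\text{ for some }Z\subseteq V\}$).

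For this I would try the candidate $V'=\{v\in V:R[v]\cap S_w^{-1}[V]=\emptyset\}$. With this choice $R[V']\cap S_w^{-1}[V]=\emptyset$ is immediate, so the whole task reduces to showing $V'\neq\emptyset$ and $uS_wV'$. The idea is to exploit converse well-foundedness of $R$ (so that $\dot{R}[u]\cap V$, and more generally every non-empty subset of $W$, has an $R$-maximal element) and to split on whether $uS_wV$ holds via clause~(a) or clause~(b) of the definition of $S_w$, running an analysis parallel to the quasi-transitivity argument inside Lemma~\ref{lemma-main-w}. In clause~(a) one would pick an $R$-maximal $v_0\in\dot{R}[u]\cap V$: then every $z\in R[v_0]$ lies in $R[u]$, so $zS_wV$ via clause~(a) would produce an element of $\dot{R}[u]\cap V$ strictly $R$-above $v_0$, which is impossible; hence $v_0\in V'$ once one also excludes clause~(b) for $zS_wV$. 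In clause~(b) one would take $S$ with $w\prec_S u$, extract $v\in V$ and $G\in\mathcal D\cap\bigcup\dot{R}[u]$ with $w\prec_{S\cup\{\Box\neg G\}}v$, and then replace $v$ by a member of $V'$ that still carries a label of this form, transferring and shrinking labels via Lemma~\ref{lema3.2}(a)--(b). I expect the interpretability principle \textsf{W} itself to enter exactly at the point where a label $w\prec_S u$ must be strengthened to $w\prec_{S\cup\{\Box\neg G\}}u$.

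The main obstacle is precisely this last reduction: ruling out that a successor $z$ of a chosen $V'$-candidate falls back into $S_w^{-1}[V]$ through clause~(b) of the definition of $zS_wV$. Unlike clause~(a), clause~(b) can be witnessed by an element of $V$ that is not $R$-below $z$, so a plain ``take an $R$-maximal witness'' argument is not obviously sufficient; one likely needs either a subtler choice of $V'$ (for instance a suitably ``saturated'' subset, closed under the labelling moves that clause~(b) can trigger) or an inner induction along the converse well-founded order on $W$, combined with the fact that $G\in\mathcal D\cap\bigcup\dot{R}[\,\cdot\,]$ forces the relevant worlds to carry boxed formulas. Once this delicate point is settled, $V'\neq\emptyset$ and $uS_wV'$ follow, $\mathfrak M$ possesses \kgen{W}, and the theorem follows as above.
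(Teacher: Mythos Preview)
Your setup is correct and matches the paper: reduce to showing that the \il{W}-structure for $\mathcal D$ possesses \kgen{W}, and aim for a subset $V'\subseteq V$ whose $R$-successors cannot re-enter $S_w^{-1}[V]$. However, the part you flag as ``the main obstacle'' is exactly the heart of the proof, and you do not resolve it. Your candidate $V'=\{v\in V:R[v]\cap S_w^{-1}[V]=\emptyset\}$ is the right target object, but the argument you sketch (pick an $R$-maximal witness, then exclude clause~(b) by some further well-foundedness step, possibly invoking the axiom \textsf W syntactically) does not go through as stated. Clause~(b) can be witnessed by a $v\in V$ that is $R$-incomparable with the point $z$ you are examining, so $R$-maximality alone gives you nothing; and the principle \textsf W has already been absorbed into the construction via Lemmas~\ref{problemi-W} and~\ref{nedostaci-W}, so there is no further syntactic strengthening of labels available at this stage.

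The paper's proof of this step is quite different from what you anticipate. It argues by contradiction: assume $V$ satisfies $uS_wV$ but every $V'\subseteq V$ with $uS_wV'$ has $R[V']\cap S_w^{-1}[V]\neq\emptyset$, and let $\mathcal V$ be the family of all such $V$. One then stratifies $S_w$-transitions by subsets of $\mathcal D$: for each $i<n=2^{|\mathcal D|}$ one writes $yS_w^iU$ when $yS_wU$, $\mathcal D_i\subseteq\bigcup\dot R[y]$ and $U\subseteq[\bigvee_{G\in\mathcal D_i}\Box\neg G]_w$. A short case analysis on clauses~(a) and~(b) shows that every $yS_wU$ refines to some $yS_w^iU'$ with $U'\subseteq U$. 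The crux is a maximality argument: take the largest $m<n$ for which some $U\in\mathcal V$ admits a $U'\subseteq U$ that is ``closed'' under all $S_w^i$-moves with $i\le m$ (conditions (i)--(ii) in the paper). One shows $m<n-1$, and then that removing from $U'$ the set $Y$ of points admitting an $S_w^{m+1}$-move into $U'$ witnesses the same conditions for $m+1$; the key semantic fact is that $Y\Vdash\bigwedge_{G\in\mathcal D_{m+1}}\Diamond G$ while each $U_y\Vdash\bigvee_{G\in\mathcal D_{m+1}}\Box\neg G$, so $U_y\cap Y=\emptyset$ and quasi-transitivity gives $xS_w(U'\setminus Y)$. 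This contradicts the maximality of $m$. In short, the missing idea in your proposal is this finite counting over $\mathcal P(\mathcal D)$, not a further use of well-foundedness or of the axiom \textsf W.
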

\begin{proof}
In the light of Lemma \ref{lemma-main-w}, it suffices to show that the \il{W}-structure $\mathfrak{M}$ for $\mathcal{D}$ possesses the property 
\kgen{W}. Recall the characteristic property \kgen{W}: 
\[
  uS_w V \Rightarrow (\exists V' \subseteq V)(u S_w V' \ \& \ R[V'] \cap S_w^{-1}[V] = \emptyset ). 
\]


Suppose for a contradiction that there are $w$, $u$ and $V$ such that:
\begin{equation}
    \label{newgen}
     u S_w V \ \& \ (\forall V' \subseteq V)(u S_w V' \Rightarrow R[V'] \cap 
     S_w^{-1}[V] \neq \emptyset ).
\end{equation}
Let $\mathcal{V}$ denote all such sets $V$ (for arbitrary but fixed $w$ and $u$). 

Let $n = 2^{|\mathcal{D}|}$. Fix any enumeration $\mathcal{D}_0, \dots, \mathcal{D}_{n - 1}$  of $\mathcal{P}(\mathcal{D})$ that satisfies $\mathcal{D}_0=\emptyset.$
Denote a new relation $S_w^i$ for all $0 \leq i < n$, $y\in W$ and $U\subseteq W$ as follows:
   		\[
	  		y S_w^i U \iff yS_w U, \ \mathcal{D}_i \subseteq \bigcup\dot{R}[y], \ U \subseteq 
	  		\left[\bigvee_{G\in\mathcal{D}_i} \square \neg G \right]_w.
		\]

Let $y\in W$ and $U\subseteq W$ be arbitrary. Let us prove that $yS_w U$ implies the following:
		\begin{equation}
			\label{sv1}
			(\exists U' \subseteq U)(\exists i < n) \ y S_w^{i} U'.
		\end{equation}

If $yS_w U$ holds by (a) from the definition of $S_w$, the set $U\cap \dot{R}[y]$ is 
non-empty.
Pick arbitrary $z \in U\cap \dot{R}[y]$ and put $U' = \{z\}$. We have either $wRyRz$ or 
$y = z$. 
If $wRyRz$, we have $yS_w \{ z\}.$ 
Otherwise $y = z$. Now quasi-reflexivity implies $yS_w\{z\}.$
Since $y\in W$, there is a formula $G\in \mathcal{D}$ such that 
$G\wedge\square\neg G\in y$. 
Fix $i<n$ such that $\mathcal{D}_i=\{ G\}.$  
Clearly $\mathcal{D}_i\subseteq \bigcup\dot{R}[y].$
Since $z \in U$ and $yS_w U$, clearly $U'\subseteq R[w]$. 
Since $yRz$, we also have $\square\neg G\in z$. 
Truth lemma implies $U'\Vdash\square\neg G$; since if $zRt$, $G \notin t$, (truth lemma is applied here) $t \nVdash G$, so $z \Vdash \square \neg G$. Thus $U'\subseteq [\square\neg G]_w$, and $yS_w^i U'$.

If $yS_w U$ holds by (b) from the definition of $S_w$, take:
$$ U'=\{ z\in U : (\exists G\in \mathcal{D}\cap \bigcup\dot{R}[y]) \ 
w\prec_{\{ \square \neg G\} } z\};$$ 
$$ \mathcal{D}_i=\{ G \in \mathcal{D}\cap \bigcup \dot{R}[y] : (\exists z\in U) \
w\prec_{\{ \square\neg G\} } z\}.$$ 
In other words, $U'$ is the image of the mapping that is implicitly present in the definition of the relation $S_w$ (clause (b)): 
for each $S$, pick a world $v_S$ (to be included in $U'$), and a formula $G_S$ (to be included in $\mathcal{D}_i$). 


\vskip 2ex
Let $m < n$ be maximal such that there are $U \in \mathcal{V}$ and $U' \subseteq U$ with 
the following properties:
\begin{enumerate}
		\item[(i)] $(\forall x \in U)[ (\exists y \in R[x]) (\exists Z \subseteq U) 
		(\exists i \leq m)\ yS_w^{i} Z \Rightarrow x \notin U' ]$;
		\item[(ii)] $(\forall x \in W)(x S_w U \Rightarrow  xS_w U')$.
\end{enumerate}
		
Since $\mathcal{D}_0=\emptyset,$ we have
$[\bigvee_{G\in\mathcal{D}_0} \square\neg G]_w=[\bot]_w=\emptyset.$
So there are no $Z\subseteq [\bigvee_{G\in\mathcal{D}_0}\square\neg G]_w$ such that $yS_w Z$
for some $y\in W$. So, if we take $m = 0$ and $U' = U$ for any $U \in \mathcal{V}$, (i) and (ii) are trivially satisfied. 

Since $n$ is finite and conditions (i) and (ii) are satisfied for at least one value $m$, there must be a maximal $m < n$ with the 
required properties.

\vskip 2ex Let us first prove that $m < n - 1$. Assume the opposite, that is, $m = n-1$.
Then there are  $U\in\mathcal{V}$ and  $U'\subseteq U$ such that the conditions (i) and (ii) are satisfied for $m=n-1.$
Since $U\in\mathcal{V}$, we have $uS_w U.$
The condition (ii) implies $uS_w U'.$ 
Now $U\in\mathcal{V},$ \ $U'\subseteq U$ and $uS_w U'$ imply 
$R[U'] \cap S_w^{-1}[U] \neq \emptyset$.
Thus there are $x \in U'$ and  $y \in R[x]$ such that $y S_w U$. 
Now (ii) implies $y S_w U'$. 
The earlier remark (\ref{sv1}) implies that there is $Z\subseteq U'$ and $i < n$ such that 
$yS_w^i Z.$
Since $m=n-1$, it follows that $i\leq m.$
The condition (i) implies $x\not\in U',$ a contradiction.
Thus $m < n-1$.

\vskip 2ex
Let us now prove that $m$ is not maximal, by showing that $m + 1$ satisfies (i) and (ii).
Let $U\in\mathcal{V}$ and $U'\subseteq U$ be some sets such that the conditions (i) and 
(ii) are satisfied for $m$. 
Denote:
\[
		Y = \{ x \in U' : (\exists y \in R[x] )(\exists Z \subseteq U')\  
		yS_w^{m+1} Z\}.
\]
Let us prove that $m + 1$ also satisfies (i) and (ii) with $U'$ instead of $U$, and 
$U' \setminus Y$ instead of $U'$.
We should first show that $U' \in \mathcal V$. 
So, suppose that $u S_w T \subseteq U'$. 
Now, $T\subseteq U' \subseteq U$ and $U \in \mathcal V$ imply that there are some 
$v \in T$ and $z \in R[v]$ such that $zS_w U$.
The property (ii) for $m$ (with sets $U$ and $U'$) implies  $zS_w U'$. 
So, $R[T] \cap S_w^{-1}[U'] \neq \emptyset$, as required.

Now let us verify the property (i) for the newly defined sets. 
Let 
$x \in U', y \in R[x], Z \subseteq U', i \leq m + 1$ be arbitrary such that $yS_w^i Z$. 
If $i \leq m$, the property (i) for $m$ implies $x \notin U'$, so in particular, $x 
\notin U' \setminus Y$. 
If $i = m + 1$, then $x \in Y$. 
Thus $x \notin U' \setminus Y$ and the conditon (i) is satisfied.
				
It remains to prove (ii). 
Take arbitrary $x \in W$ such that $x S_w U'$.
For every $y \in  Y$, the definition of $Y$ implies existence of some $z_y \in R[y]$ and 
$U_y \subseteq U'$ such that $z_yS_w^{m+1} U_y$.
From the definiton of the relation $S_w^{m+1}$ we have 
$\mathcal{D}_{m+1}\subseteq \bigcup \dot{R}[z_y].$ 
Now, $yRz_y$ and the truth lemma imply $y\Vdash \Diamond G,$ for each 
$G\in \mathcal{D}_{m+1}.$ 
From the definiton of the relation $S_w^{m+1}$ and $z_yS_w^{m+1} U_y$ 
we have $U_y\subseteq [\bigvee_{G\in \mathcal{D}_{m+1}} \square\neg G]_w.$
So, the following holds:
\[Y \Vdash \bigwedge_{G \in \mathcal{D}_{m+1}} \Diamond G \quad \text{ and } \quad U_y 
\Vdash 
\bigvee_{ G \in \mathcal{D}_{m+1}}\square\neg G,\]
for all $y \in Y$.
Thus, $U_y \cap Y = \emptyset,$ for every $y\in Y.$ 
For every $y \in U' \setminus Y$ put $U_y = \{y\}$. 
Again, $U_y \cap Y = \emptyset$. 
Note that $\bigcup_{y \in U'} U_y = U' \setminus Y$.
Now $x S_w U'$ and quasi-transitivity imply $x S_w U' \setminus Y$.

The fact that (i) and (ii) hold for $m+1$ contradicts the maximality of $m$. 
\end{proof}

\vskip 2ex
Goris and Joosten proved in \cite{Goris-Joosten-08} the completeness of \il{W*} (\il{WM\textsubscript{0}}) w.r.t.\ ordinary  Veltman semantics. 

\begin{theorem}
\label{ilwst-potpunost}
The logic \il{W}* is complete w.r.t.\ \ilgen{W*}-models.
\end{theorem}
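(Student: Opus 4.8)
**Proof plan for Theorem \ref{ilwst-potpunost} (completeness of \il{W*} w.r.t.\ \ilgen{W*}-models).**

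The plan is to reuse the machinery already developed for \il{W}. Since \textsf{X} $=$ \textsf{W*} is an admissible value in the definition of the \il{X}-structure used for Theorem \ref{tm-ilw}, Lemma \ref{lemma-main-w} already applies: the \il{W*}-structure $\mathfrak{M}$ for any $\mathcal{D}$ (with the usual closure properties) is a generalized Veltman model satisfying the truth lemma $\mathfrak{M},w\Vdash G \iff G\in w$ for all $G\in\mathcal{D}$. Moreover, since \il{W*} extends \il{W}, every \il{W*}-MCS is a fortiori an \il{WX}-MCS, so Lemmas \ref{problemi-W} and \ref{nedostaci-W} apply, and the proof of Theorem \ref{tm-ilw} shows verbatim that $\mathfrak{M}$ possesses \kgen{W}. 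Thus, to conclude completeness it suffices to verify that $\mathfrak{M}$ additionally possesses the characteristic property \kgen{W*} of the principle \textsf{W*}, i.e.\ the \kgen{M\textsubscript{0}}-style condition combined with \textsf{W}. Concretely, I would first record the characteristic property \kgen{W*} for generalized frames (the conjunction, roughly, of \kgen{W} and \kgen{M\textsubscript{0}}), and then---given Lemma \ref{lemma-main-w} and the \kgen{W}-verification from Theorem \ref{tm-ilw}---it remains only to establish the \kgen{M\textsubscript{0}}-component.

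To establish the \kgen{M\textsubscript{0}}-component for $\mathfrak{M}$, I would adapt the argument used in the completeness proof for \ilmn{}. Suppose $wRuRxS_wV$. Using the \il{W*}-analogue of the labelling lemma (Lemma \ref{label-mn}), namely that $w\prec_S u\prec x$ implies $w\prec_{S\cup u_\emptyset^\square}x$ (this holds for \il{W*} since \il{W*} proves both \textsf{M\textsubscript{0}} and \textsf{W}), take $V' = \{ v\in V : w\prec_{u_\emptyset^\square} v \}$, augmented if necessary to also respect the case-(a) clause of the $S_w$-definition, i.e.\ take $V' = \{v \in V : v \in \dot R[u] \text{ or } w\prec_{u_\emptyset^\square}v\}$. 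The verification that $uS_wV'$ proceeds exactly as in the \ilmn{} proof when $uS_wV$ holds via clause (b) (push $w\prec_S u$ through $u\prec x$, then through $xS_wV$, then shrink the label using Lemma \ref{lema3.2}), and trivially when it holds via clause (a). The inclusion $R[V']\subseteq R[u]$ is the same boxed-formula-propagation argument as in \ilmn: for $v\in V'$ with $w\prec_{u_\emptyset^\square}v$ and $vRz$, every $\square B\in u$ lies in $v$ hence $\square B, B\in z$, so $uRz$; and for $v\in V'\cap\dot R[u]$ we have $R[v]\subseteq R[u]$ directly by transitivity (or by $v=u$).

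The main obstacle is the interaction between the two requirements: the set $V'$ must simultaneously witness \kgen{W} (the $R[V']\cap S_w^{-1}[V]=\emptyset$ part, whose proof in Theorem \ref{tm-ilw} is the long combinatorial argument with the relations $S_w^i$ and the maximal index $m$) and the \kgen{M\textsubscript{0}}-part ($R[V']\subseteq R[u]$). These two shrinking operations on $V$ need to be shown compatible---one cannot in general first shrink for \textsf{W} and then shrink for \textsf{M\textsubscript{0}} and expect the \textsf{W}-property to survive, so some care is needed to run a single argument that respects both, or to argue that the \textsf{M\textsubscript{0}}-shrinking preserves membership in $\mathcal V$ and re-run the \textsf{W}-argument inside the smaller set. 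I expect the cleanest route is: first apply the \kgen{M\textsubscript{0}}-shrinking to pass from $V$ to a set $V_0\subseteq V$ with $R[V_0]\subseteq R[u]$ and $uS_wV_0$ (via the clause-(b) argument above), check that this does not destroy the hypotheses needed for \kgen{W} (in particular that the resulting configuration still falls under the scope of the Theorem \ref{tm-ilw} argument applied with $V_0$ in place of $V$), and then quote Theorem \ref{tm-ilw}'s combinatorics to extract $V'\subseteq V_0$ witnessing \kgen{W}; since $V'\subseteq V_0$, we still have $R[V']\subseteq R[u]$, giving both properties at once.
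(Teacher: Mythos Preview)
Your first two paragraphs are correct and match the paper's proof almost exactly: invoke Lemma \ref{lemma-main-w}, cite Theorem \ref{tm-ilw} for \kgen{W}, and verify \kgen{M\textsubscript{0}} by a case split on whether $xS_wV$ holds via clause (a) or clause (b), using Lemma \ref{label-mn} in the (b) case. The paper separates the cases (taking $V'=\{v\}$ in case (a) and $V'=\{v\in V: w\prec_{u_\emptyset^\square}v\}$ in case (b)) rather than merging them into your single $V'$, but this is cosmetic.

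Your third paragraph, however, rests on a misreading of the frame condition. The characteristic property \kgen{W*} is simply the \emph{conjunction} of the two independent frame conditions \kgen{W} and \kgen{M\textsubscript{0}}; each is a universally quantified implication with its own existentially quantified witness $V'$. There is no single configuration in which one $V'$ is required to witness both simultaneously: \kgen{W} asks that for every $uS_wV$ there exist some $V'\subseteq V$ with $uS_wV'$ and $R[V']\cap S_w^{-1}[V]=\emptyset$, while \kgen{M\textsubscript{0}} asks that for every $wRuRxS_wV$ there exist some (possibly different) $V'\subseteq V$ with $uS_wV'$ and $R[V']\subseteq R[u]$. These are verified entirely separately, exactly as the paper does. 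The ``obstacle'' you describe---compatibility of the two shrinking operations---does not arise, and the proposed fix (shrink for \textsf{M\textsubscript{0}} first, then re-run the \textsf{W} combinatorics inside) is unnecessary. Drop the third paragraph and the proof is complete.
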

\begin{proof}
With Lemma \ref{lemma-main-w}, it suffices to prove the \il{W*}-structure for $\mathcal{D}$
possesses the properties \kgen{W} and \kgen{M\textsubscript{0}}, for each appropriate    $\mathcal{D}.$
So, let $\mathfrak{M}=(W,R,\{ S_w: w\in W\},\Vdash)$ be the \il{W}*-structure for $\mathcal D$.
Theorem \ref{tm-ilw} shows that the model $\mathfrak{M}$ possesses the property \kgen{W}.
It remains to show that it posesses the property \kgen{M\textsubscript{0}}.

\vskip 2ex
Assume $wRuRx S_w V$. We claim that there is $V'\subseteq V$ such 
that $u S_w V'$ and $R[V'] \subseteq R[u]$. 

First, consider the case when $xS_w V$ holds by the clause (a) from the 
definition of $S_w$.
So there is $v \in V$ such that $x=v$ or $xRv$.
In both cases, $wRuRv$, and so $uS_w \{ v\}.$ 
It is clear that $R[v]\subseteq R[x]\subseteq R[u]$. 
So it suffices to take $V' = \{ v \}$.

\vskip 2ex
Otherwise, $xS_w V$ holds by the clause (b). Take 
$V' = \{ v \in V : w \prec_{u_\emptyset^\square} v \}$. 
Clearly, $V'\subseteq V\subseteq R[w]$. 
Assume $w \prec_S u$. Now $w \prec_{S} u \prec x$ and Lemma \ref{label-mn} imply 
$w \prec_{S \cup u_\emptyset^\square} x$. 
The definition of $xS_wV$ (clause (b)) implies there is $G \in \mathcal{D} 
\cap \bigcup\dot{R}[x]$ (so $G \in \mathcal{D} \cap \bigcup\dot{R}[u]$) and $v \in V$ such that
$w \prec_{S \cup u_\emptyset^\square \cup \{ \square \neg G \}} v$, thus also $v \in V'$. 
In particular, $w \prec_{S \cup \{ \square \neg G \}} v$. Since $S$ was arbitrary, $u S_w V'$.
It remains to verify that $R[V'] \subseteq R[u]$. Assume $V' \ni vRz$. 
Since $w \prec_{u_\emptyset^\square} v$, for all $\square B \in u$ we have $\square B \in v$, and since 
$vRz$, it follows that $\square B, B \in z$. Thus, $u \prec z$ i.e.\  $uRz.$
\end{proof}

\vskip 2ex
In \cite{Mikec-Perkov-Vukovic-17} it is shown that \il{W*} possesses finite model property w.r.t.\ generalized Veltman models. To show decidability, (stronger) completeness w.r.t.\ ordinary Veltman models was used, but the Theorem \ref{ilwst-potpunost} would suffice for this purpose.

\section{Finite model property and decidability}
For \il{}, \il{M}, \il{P} and \il{W}, original completeness proofs were proofs of completeness w.r.t.\ appropriate finite models  \cite{deJongh-Veltman-90}, \cite{deJongh-Veltman-99}. For these logics, FMP w.r.t.\ ordinary semantics and decidability are immediate (and completeness and FMP w.r.t.\ generalized semantics are easily shown to follow from these results). These completeness proofs use \textit{truncated} maximal consistent sets, that is, sets that are maximal consistent with respect to the so-called \textit{adequate} set. The principal requirement is that this set is finite. Already with \il{M}, defining adequacy is not trivial (see \cite{deJongh-Veltman-90}). 

For more complex logics, not much is known about FMP w.r.t.\ ordinary semantics. The filtration method can be used with generalized models to obtain finite models. This approach was successfully used to prove FMP of \ilmn{} and \il{W*} w.r.t.\ generalized semantics \cite{Perkov-Vukovic-16}, \cite{Mikec-Perkov-Vukovic-17}. A drawback of this approach is in that FMP w.r.t.\ ordinary semantics does not follow from FMP w.r.t.\ generalized semantics. Decidability can be obtained from FMP w.r.t.\ either semantics. At the moment it is not clear whether the choice of semantics would affect our ability to produce results regarding computational complexity of provability and consistency of \il{X}. 

Let us overview basic notions and results of  \cite{Perkov-Vukovic-16} and \cite{Mikec-Perkov-Vukovic-17}. Let $A$ be a formula. If $A$ equals $\neg B$ for some $B$, then ${\sim}A$ is $B$, otherwise ${\sim}A$ is $\neg B$. We need to slightly extend the definition of adequate sets\footnote{Note that this is a different notion of \textit{adequacy} than the one used for completeness proofs in  \cite{deJongh-Veltman-90}, \cite{deJongh-Veltman-99}, and \cite{Goris-Joosten-08}.} that was used in \cite{Perkov-Vukovic-16}. The modified version will satisfy all the old properties.

\begin{definition}
Let $\mathcal{D}$ have the usual the properties: a finite set of formulas that is closed under taking subformulas and 
single negations, and $\top\in \mathcal{D}.$
We say that a set of formulas $\Gamma_\mathcal{D}$ is an adequate set w.r.t.\ 
$\mathcal{D}$ if it satisfies the following 
conditions:
\begin{enumerate}
\item $\Gamma_\mathcal{D}$ is closed under taking subformulas;
\item if $A\in\Gamma_\mathcal{D}$ then ${\sim}A\in\Gamma_\mathcal{D};$
\item $\bot\rhd\bot\in\Gamma_\mathcal{D};$
\item $A\rhd B\in\Gamma_\mathcal{D}$ if $A$ is an antecedent or succedent of some $\rhd$-formula 
in $\Gamma_\mathcal{D},$ and so is $B;$
\item if $A\in\mathcal{D}$ then $\square\neg A\in \Gamma_\mathcal{D}.$    
\end{enumerate}
\end{definition}

Since $\mathcal{D}$ is finite, $\Gamma_\mathcal{D}$ is finite too. Next we require the concept of bisimulations between generalized models.

\begin{definition}[\cite{Vrgoc-Vukovic}]
A bisimulation between generalized Veltman models 
$\mathfrak M=(W,R,\{S_w:w\in W\},\Vdash)$ 
and $\mathfrak M'=(W',R',\{S'_{w'}:w'\in W'\},\Vdash)$ is a non-empty
relation $Z\subseteq W\times W'$ such that:
\begin{itemize}
\item[(at)] if $wZw'$, then $w\Vdash p$ if and only if $w'\Vdash p$, for all propositional 
variables $p;$
\item[(forth)] if $wZw'$ and $wRu$, then there is $u'\in W'$ such that $w'R'u'$, $uZu'$ and for all 
$V'\subseteq W'$ such that $u'S'_{w'}V'$ there is $V\subseteq W$ such that $uS_wV$ and for
               all $v\in V$ there is $v'\in V'$ with $vZv';$ 
\item[(back)] if $wZw'$ and $w'R'u'$, then there is $u\in W$ such that $wRu$, $uZu'$ and for all 
$V\subseteq W$ such that $uS_wV$ there is $V'\subseteq W'$ such that $u'S'_{w'}V'$ and for
               all $v'\in V'$ there is $v\in V$ with $vZv'.$ 
\end{itemize}
\end{definition}

Given a generalized Veltman model $\mathfrak M$, the 
union of all bisimulations on $\mathfrak M$,
denoted by $\sim_\mathfrak{M}$, is the largest bisimulation on $\mathfrak M$, and $\sim_\mathfrak{M}$ is an equivalence relation \cite{Vrgoc-Vukovic}.

An $\sim_\mathfrak{M}$-equivalence class of $w\in W$ will be denoted by $[w]$. For any set of worlds $V$, put $\widetilde{V}=\{[w]:w\in V\}$.

A filtration of $\mathfrak M$ through $\Gamma_\mathcal{D},\ \sim_\mathfrak{M}$ is 
any generalized Veltman model 
$\widetilde{\mathfrak M}=(\widetilde{W},\widetilde{R},\{\widetilde{S}_{[w]}: w\in W\},\Vdash)$ such 
that for all $w\in W$ and $A\in\Gamma_\mathcal{D}$ we have $w\Vdash A$ if
and only if $[w]\Vdash A$ (we denote both forcing relations as $\Vdash$, as there is no risk of 
confusion).

The following lemma combines key results of \cite{Perkov-Vukovic-16} 
(Lemma 2.3, Theorem 2.4., Theorem 3.2).

\begin{lemma}\label{tm1}
Let $\mathfrak{M}=(W,R,\{S_w:w\in W\},\Vdash)$ be a generalized Veltman model, 
and $\sim_\mathfrak{M}$ the largest bisimulation on $\mathfrak{M}$. Define:

{\em (1)} $[w]\widetilde{R}[u]$ if and only if for some  $w'\in[w]$ and $u'\in[u]$, $w'Ru'$ and 
there is $\square A\in\Gamma_\mathcal{D}$ such that $w'\nVdash\square A$ and $u'\Vdash\square A;$

{\em (2)} $[u]\widetilde{S}_{[w]}\widetilde{V}$ if and only if $[w]\widetilde{R}[u]$, 
$\widetilde{V}\subseteq \widetilde{R}[[w]]$, and for all
$w'\in[w]$ and $u'\in[u]$ such that $w'Ru'$ we have $u'S_{w'}V'$ for some $V'$ such that 
$\widetilde{V'}\subseteq \widetilde{V};$

{\em (3)} for all propositional variables $p\in\Gamma_\mathcal{D}$
put $[w]\Vdash p$ if and only if 
$w\Vdash p,$ and interpret propositional variables $q\not\in\Gamma_\mathcal{D}$
arbitrarily (e.g.\ put $[w]\nVdash q$ for all $[w]\in\widetilde{W}).$

Then $\widetilde{\mathfrak{M}}=(\widetilde{W}, \widetilde{R}, \{ \widetilde{S}_{[w]} : w \in W\}, 
\Vdash)$ is a filtration of $\mathfrak{M}$ through $\Gamma_\mathcal{D},\sim_\mathfrak{M}$. The model $\widetilde{\widetilde{\mathfrak{M}}}$ is finite.
\end{lemma}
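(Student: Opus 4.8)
The plan is to verify, in order: (i) $\widetilde{\mathfrak{M}}$ is a generalized Veltman frame; (ii) it satisfies the filtration condition $w\Vdash A\iff[w]\Vdash A$ for all $A\in\Gamma_\mathcal{D}$; (iii) one further application of the same construction to $\widetilde{\mathfrak{M}}$ yields a finite model. Two elementary facts drive everything. First, forcing of any formula is constant on $\sim_{\mathfrak M}$-classes (bisimulations preserve forcing), so $[u]\Vdash A$ is unambiguous and representatives may be swapped freely, in particular when reading off clauses (1) and (2). Second, since $R$ is transitive, $w'Ru'$ implies $\{\square B\in\Gamma_\mathcal{D}:w'\Vdash\square B\}\subseteq\{\square B\in\Gamma_\mathcal{D}:u'\Vdash\square B\}$, and this set is $\sim_{\mathfrak M}$-invariant; by the definition of $\widetilde{R}$ it increases strictly at each $\widetilde{R}$-step.

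For the frame conditions: transitivity of $\widetilde{R}$ follows by composing the two witnessing $R$-edges after moving the middle representative along a bisimulation (clauses (forth)/(back)) and reusing any $\Gamma_\mathcal{D}$-box that separates the outer endpoints — one exists by the monotonicity-and-strict-increase remark — and the same remark bounds $\widetilde{R}$-chains by $|\Gamma_\mathcal{D}|$, giving converse well-foundedness. For the $\widetilde{S}_{[w]}$, each of (a)--(e) reduces to the corresponding clause of the generalized Veltman definition applied inside $\mathfrak{M}$ to a representative pair $w'\in[w]$, $u'\in[u]$ with $w'Ru'$, together with bisimulation transport to accommodate the universal quantifier over such pairs; non-emptiness of the second component in (a) comes from non-emptiness of $S_{w'}$-successors in $\mathfrak{M}$. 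Quasi-transitivity is the only bookkeeping-heavy case: from $[u]\widetilde{S}_{[w]}\widetilde{V}$ choose for each pair $(w',u')$ a set $V'$ with $u'S_{w'}V'$ and $\widetilde{V'}\subseteq\widetilde{V}$; from $[v]\widetilde{S}_{[w]}\widetilde{Z}_{[v]}$ choose for each $v'\in V'$ (using the pair $(w',v')$) a set $Z'_{v'}$ with $v'S_{w'}Z'_{v'}$ and $\widetilde{Z'_{v'}}\subseteq\widetilde{Z}_{[v']}$; then quasi-transitivity in $\mathfrak{M}$ gives $u'S_{w'}\bigcup_{v'}Z'_{v'}$, whose quotient lies in $\bigcup_{[v]\in\widetilde{V}}\widetilde{Z}_{[v]}$.

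The filtration property is proved by induction on $A\in\Gamma_\mathcal{D}$, with atomic and Boolean cases immediate from clause (3) and closure of $\Gamma_\mathcal{D}$. For $A=B\rhd C$: in the forward direction, given $[u]$ with $[w]\widetilde{R}[u]$ and $[u]\Vdash B$, for every representative pair $(w',u')$ we have $w'\Vdash B\rhd C$ (invariance) and $u'\Vdash B$ (IH), so there is $V'$ with $u'S_{w'}V'$ and $V'\Vdash C$; taking $\widetilde{V}$ to be the union over all such pairs of the quotients $\widetilde{V'}$ gives $\widetilde{V}\Vdash C$ by IH and $[u]\widetilde{S}_{[w]}\widetilde{V}$ by construction, the one thing to be checked being $\widetilde{V}\subseteq\widetilde{R}[[w]]$. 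In the converse direction, if $w\nVdash B\rhd C$ then the set of $u$ with $wRu$, $u\Vdash B$ and $(\forall V)(uS_wV\Rightarrow V\nVdash C)$ is non-empty, so, as $R$ is conversely well-founded, pick an $R$-maximal such $u$; a short argument using quasi-transitivity and clause (d) shows that an $R$-successor of $u$ forcing $B$ would again be of this form, so by maximality $u\Vdash\square\neg B$, while $w\nVdash\square\neg B$, and since $\square\neg B\in\Gamma_\mathcal{D}$ by adequacy we get $[w]\widetilde{R}[u]$. Then $[u]\Vdash B$ by IH, and for any $\widetilde{V}$ with $[u]\widetilde{S}_{[w]}\widetilde{V}$, applying clause (2) to the pair $(w,u)$ yields $V'$ with $uS_wV'$ and $\widetilde{V'}\subseteq\widetilde{V}$, so $V'\nVdash C$ (as $u$ is a witness), whence some $[v']\in\widetilde{V}$ satisfies $[v']\nVdash C$ by IH; thus $\widetilde{V}\nVdash C$ and $[w]\nVdash B\rhd C$.

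Finally, running the construction once more on $\widetilde{\mathfrak{M}}$ (through $\Gamma_\mathcal{D}$ and $\sim_{\widetilde{\mathfrak{M}}}$) produces $\widetilde{\widetilde{\mathfrak{M}}}$, and one shows it is finite by checking that in $\widetilde{\mathfrak{M}}$ any two worlds agreeing on all of $\Gamma_\mathcal{D}$ are already bisimilar — possible because $\widetilde{R}$ and $\widetilde{S}$ refer to their representatives only through truth of $\Gamma_\mathcal{D}$-formulas — so $\widetilde{\widetilde{\mathfrak{M}}}$ has at most $2^{|\Gamma_\mathcal{D}|}$ worlds. I expect the $\rhd$-case of the filtration property to be the main obstacle, and within it two spots: in the forward direction, verifying $\widetilde{V}\subseteq\widetilde{R}[[w]]$, which is exactly what makes the $\Gamma_\mathcal{D}$-box clause in the definition of $\widetilde{R}$ delicate (it is transparent precisely for the canonical-style models to which the lemma is applied, where every $R$-edge already carries a separating $\square\neg G$ with $G\in\mathcal{D}$); and in the converse direction, the passage to an $R$-maximal falsifying world, whose sole purpose is to make $\square\neg B$ true there, which in turn is why $\square\neg B$ for every relevant antecedent $B$ must sit in the adequate set.
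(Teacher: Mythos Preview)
The paper does not prove this lemma; it records it as a combination of Lemma~2.3, Theorem~2.4 and Theorem~3.2 of \cite{Perkov-Vukovic-16}, so there is no in-paper argument to compare against. Your sketch is being measured against the cited external proof, and in broad strokes it follows the same route: verify the generalized-frame axioms for $(\widetilde W,\widetilde R,\widetilde S)$, prove the truth lemma by induction on $\Gamma_\mathcal{D}$, and obtain finiteness after a second pass by showing that $\Gamma_\mathcal{D}$-equivalent worlds of $\widetilde{\mathfrak M}$ are already bisimilar.

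The one point you flag as a genuine obstacle --- the inclusion $\widetilde V\subseteq\widetilde R[[w]]$ in the forward direction of the $\rhd$-clause --- is indeed the crux, but your diagnosis that it is ``transparent precisely for the canonical-style models'' is too pessimistic, and as written it leaves a gap relative to the lemma's hypothesis (an \emph{arbitrary} generalized Veltman model). The fix is the very manoeuvre you already use in the converse direction. Given $w'\Vdash B\rhd C$, $w'Ru'$, $u'\Vdash B$, do not stop at an arbitrary $V'$ with $u'S_{w'}V'$ and $V'\Vdash C$; instead use clause~(d), quasi-transitivity, and converse well-foundedness of $R$ to replace each $v'\in V'$ by the $R$-maximal $C$-worlds above (or equal to) it. Every such world forces $C\rhd\bot$, while $w'$ refutes $C\rhd\bot$ since $w'$ sees a $C$-world; and $C\rhd\bot\in\Gamma_\mathcal{D}$ by clauses~3 and~4 of adequacy (this is precisely why adequacy insists on $\bot\rhd\bot$ and on closure under pairing antecedents/succedents). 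That supplies the separating box needed in clause~(1) of the definition of $\widetilde R$, so the resulting $\widetilde V$ lies inside $\widetilde R[[w]]$. With this amendment your argument goes through for arbitrary $\mathfrak M$, in line with \cite{Perkov-Vukovic-16}.
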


Lemma \ref{tm1} implies that \il{} has FMP w.r.t.\ generalized semantics. 
To prove that a specific extension has FMP, it remains to show that  filtration preserves its characteristic property.

Since we are going to use \il{X}-structures as the starting models $\mathfrak{M}$, we can make use of their properties. In particular, we do not have to make sure that there is a formula $\square A$ such that $x \nVdash \square A$ and $y \Vdash \square A$ when we want to show that $xRy$ implies $[x] \R [y].$

\begin{lemma}\label{lema-pomoc}
Let $X$ be a subset of $\{$\textsf{M}, \textsf{M\textsubscript{0}}, \textsf{P}, \textsf{P\textsubscript{0}}, \textsf{R}, \textsf{W}$\}$ 
and $\mathfrak M = (W, R, \{S_w : w \in W\}, \Vdash)$ 
the \il{X}-structure for a set $\mathcal{D}.$ 
If $xRy$ then $[x] \R [y]$. 
\end{lemma}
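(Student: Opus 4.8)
The plan is to unwind the definition of the filtrated relation $\R$ from Lemma~\ref{tm1}(1) and to check that, for \il{X}-structures, the witness boxed formula is automatically available. Recall $[x]\R[y]$ holds iff there are $x'\in[x]$, $y'\in[y]$ with $x'Ry'$ and some $\square A\in\Gamma_\mathcal{D}$ with $x'\nVdash\square A$ and $y'\Vdash\square A$. So, given $xRy$, I would simply take $x'=x$ and $y'=y$ (bisimilarity is reflexive, so $x\in[x]$ and $y\in[y]$), and the only thing left is to produce the separating boxed formula.

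First I would use the defining property of the \il{X}-structure: every world of $W$ contains $G\wedge\square\neg G$ for some $G\in\mathcal D$. Apply this to $y$: there is $G\in\mathcal D$ with $G,\square\neg G\in y$. By the closure condition on $\mathcal D$ (and hence on $\Gamma_\mathcal{D}$), $\square\neg G\in\Gamma_\mathcal{D}$; this is exactly where the fifth clause of the adequate-set definition is needed. So $\square\neg G$ is a candidate for the formula $\square A$ with $A=\neg G$.

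Next I would verify the two forcing conditions via the truth lemma (Lemma~\ref{lemma-main}, or Lemma~\ref{lemma-main-w} when $X$ includes \textsf{W}). Since $\square\neg G\in y$ and $\square\neg G\in\Gamma_\mathcal{D}$, the truth lemma gives $y\Vdash\square\neg G$. For $x$: from $xRy$, i.e.\ $x\prec y$, and $G\in y$, we cannot have $\square\neg G\in x$ (this is precisely the observation already used in Lemma~\ref{lemma-main} to prove converse well-foundedness: $\square\neg G\in x$ together with $x\prec y$ forces $\neg G\in y$, contradicting $G\in y$). Hence $\square\neg G\notin x$, and since $\square\neg G\in\Gamma_\mathcal{D}$ the truth lemma yields $x\nVdash\square\neg G$. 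Thus the pair $(x,y)$ with witness $\square\neg G$ satisfies clause~(1), so $[x]\R[y]$.

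There is essentially no obstacle here: the lemma is a bookkeeping observation isolating the fact that, for \il{X}-structures, the ``there is a separating box'' requirement in the definition of $\R$ is free of charge. The only mild subtlety is making sure $\square\neg G$ lies in $\Gamma_\mathcal{D}$, which is guaranteed by the added clause~5 in the definition of adequate sets; this is exactly the reason that clause was introduced, as the remark preceding the lemma points out.
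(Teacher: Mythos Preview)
Your approach is the same as the paper's, but there is a small gap in your appeal to the truth lemma. Lemma~\ref{lemma-main} (and Lemma~\ref{lemma-main-w}) gives the equivalence $w\Vdash G \Leftrightarrow G\in w$ only for $G\in\mathcal D$, not for all of $\Gamma_\mathcal{D}$; in particular, $\square\neg G$ need not lie in $\mathcal D$, so you cannot invoke the truth lemma to pass from $\square\neg G\in y$ to $y\Vdash\square\neg G$, nor from $\square\neg G\notin x$ to $x\nVdash\square\neg G$.

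The paper handles this semantically, which is the easy fix you should make. For $y$: from $\square\neg G\in y$ and the definition $R=\prec$ one gets $\neg G\in t$ for every $t$ with $yRt$; since ${\sim}G\in\mathcal D$, the truth lemma does apply to give $t\nVdash G$, hence $y\Vdash\square\neg G$. For $x$: from $G\in y$ and $G\in\mathcal D$ the truth lemma gives $y\Vdash G$, and $xRy$ then yields $x\nVdash\square\neg G$ directly from the semantics of $\square$. With this adjustment your proof is correct and coincides with the paper's.
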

\begin{proof}
We should find a formula $\square A \in \Gamma_\mathcal{D}$ 
such that $x \nVdash \square A$ and $y \Vdash \square A$. 
Since $y \in W$, there is a formula $B \in \mathcal D$ such that 
$B\wedge\square\neg B\in y$. Lemma \ref{lemma-main} implies
$y \Vdash B \wedge \square \neg B$ ($\square \neg B$ might not be in $\mathcal{D}$, but ${\sim} B$ is; since $R[y] \Vdash {\sim} B$, we must have $y \Vdash \square \neg B$). 
Since $xRy$, we have $x \nVdash \square \neg B$. Since 
$B \in \mathcal D$, we have $\square \neg B \in \Gamma_\mathcal{D}$.
Thus we can take $A = \neg B$.
\end{proof} 

Note that the proof above applies not only when transforming \il{X} structures $\mathfrak M$ to $\widetilde{\mathfrak{M}}$, but also when transforming $\widetilde{\mathfrak{M}}$ to $\widetilde{\widetilde{\mathfrak{M}}}$.

\begin{lemma}\label{ILP0-fmp}
Let $\mathfrak{M}=(W,R,\{ S_w : w\in W\},\Vdash)$ be the \il{P\textsubscript{0}}-structure for 
$\mathcal{D}$  
and  $\sim_\mathfrak{M}$ be the largest bisimulation on $\mathfrak{M}.$
Then the filtration $\widetilde{\mathfrak M}$ as defined in Lemma \ref{tm1} possesses the 
property \kgen{P\textsubscript{0}}.
\end{lemma}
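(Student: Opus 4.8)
The plan is to transfer the property \kgen{P\textsubscript{0}} from $\mathfrak{M}$, which possesses it by the proof of Theorem \ref{ILP0-complete}, up to its filtration $\widetilde{\mathfrak{M}}$, using the explicit descriptions of $\widetilde{R}$ and $\widetilde{S}$ in Lemma \ref{tm1}, the fact that in an \il{X}-structure $xRy$ already forces $[x]\widetilde{R}[y]$ (Lemma \ref{lema-pomoc}), and the bisimulation clauses (forth)/(back) for $\sim_{\mathfrak{M}}$.

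Assume $[w]\widetilde{R}[x]$, $[x]\widetilde{R}[u]$, $[u]\widetilde{S}_{[w]}\widetilde{V}$, and that every world $[v]\in\widetilde{V}$ of $\widetilde{\mathfrak{M}}$ satisfies $\widetilde{R}[[v]]\cap\mathbf{Z}\neq\emptyset$ for a given set $\mathbf{Z}$ of worlds of $\widetilde{\mathfrak{M}}$. Set $Z^{*}=\{z\in W:[z]\in\mathbf{Z}\}$, so $\widetilde{Z^{*}}=\mathbf{Z}$, and let $\mathbf{Z}'=\{[z]\in\mathbf{Z}:[x]\widetilde{R}[z]\}\subseteq\mathbf{Z}\cap\widetilde{R}[[x]]$. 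By Lemma \ref{tm1}\,(2) it then suffices, for every pair $x'\in[x]$, $u'\in[u]$ with $x'Ru'$, to produce $Z''\subseteq W$ with $u'S_{x'}Z''$ and $\widetilde{Z''}\subseteq\mathbf{Z}'$.

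Fix such $x',u'$. First I would realize the chain downstairs: from $[w]\widetilde{R}[x]$ together with the bisimulation clauses and Lemma \ref{lema-pomoc}, obtain $w'\in[w]$ with $w'Rx'$, hence $w'Ru'$; then $[u]\widetilde{S}_{[w]}\widetilde{V}$ yields $V'\subseteq W$ with $u'S_{w'}V'$ and $\widetilde{V'}\subseteq\widetilde{V}$. Everything after that is bookkeeping. For each $v'\in V'$ we have $[v']\in\widetilde{V'}\subseteq\widetilde{V}$, so there is $[z]\in\mathbf{Z}$ with $[v']\widetilde{R}[z]$; unpacking $\widetilde{R}$ and applying (forth) gives $z'\in[z]\subseteq Z^{*}$ with $v'Rz'$, whence $R[v']\cap Z^{*}\neq\emptyset$. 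Now $w'Rx'Ru'S_{w'}V'$ plus this choice-set condition, together with the fact that $\mathfrak{M}$ possesses \kgen{P\textsubscript{0}}, give $Z''\subseteq Z^{*}$ with $u'S_{x'}Z''$. Since $Z''\subseteq R[x']$, for every $z\in Z''$ we have $[z]\in\mathbf{Z}$ and (Lemma \ref{lema-pomoc}, using $[x']=[x]$) $[x]\widetilde{R}[z]$, i.e.\ $[z]\in\mathbf{Z}'$; hence $\widetilde{Z''}\subseteq\mathbf{Z}'$, as required, and $[u]\widetilde{S}_{[x]}\mathbf{Z}'$ follows.

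The main obstacle is the one step I passed over quickly: given an arbitrary representative $x'$ of $[x]$ in the middle of the required $R$-chain, producing a representative $w'$ of $[w]$ with $w'Rx'$ (and the accompanying witness $V'$). Bisimulations transport outgoing edges, not incoming ones, so this does not follow from (forth)/(back) alone; it is here that the specific structure of \il{X}-structures must be exploited — that $R$ is $\prec$, that every world of $W$ carries some $G\wedge\square\neg G$ with $G\in\mathcal{D}$, and that $\sim_{\mathfrak{M}}$-equivalent worlds are modally equivalent and hence, via the truth lemma (Lemma \ref{lemma-main}), agree on $\mathcal{D}$ — or, alternatively, the argument must be reorganized around the syntactic lemmas \ref{labelling-pn} and \ref{rsintaksa} so that the precise choice of $w'$ becomes unnecessary. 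The remaining manipulations are routine applications of Lemmas \ref{tm1}, \ref{lema3.2} and \ref{lema-pomoc}.
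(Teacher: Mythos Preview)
You have correctly located the only real difficulty, and it is a genuine gap in your argument: for an arbitrary representative $x'\in[x]$ there is no reason to have some $w'\in[w]$ with $w'Rx'$. Your proposed fix via the truth lemma does not work either: bisimilar worlds agree on $\mathcal D$, but $R$ in the \il{P\textsubscript{0}}-structure is $\prec$, which is determined by the full MCS, not by its restriction to $\mathcal D$; two distinct \il{P\textsubscript{0}}-MCS's that happen to agree on $\mathcal D$ need not stand in the relation $\prec$ to the same worlds. So as written, your argument does not go through, and the suggested repair is not sufficient.

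The paper sidesteps the problem instead of solving it. Rather than producing, for each $(x',u')$, a predecessor $w'$ of $x'$, it fixes once and for all $w_0\in[w]$ and $x_0\in[x]$ with $w_0Rx_0$ coming from $[w]\widetilde R[x]$. Then, for every pair $x'\in[x]$, $u'\in[u]$ with $x'Ru'$, it applies the \emph{full} (back) clause to $x_0\sim_{\mathfrak M}x'$ and $x'Ru'$: this yields $u_{x',u'}\sim_{\mathfrak M}u'$ with $x_0Ru_{x',u'}$, and moreover the guarantee that any $S_{x_0}$-successor set of $u_{x',u'}$ can be pushed forward to an $S_{x'}$-successor set of $u'$ with classes contained in the former. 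Now $w_0Rx_0Ru_{x',u'}$ is a genuine $R$-chain, $[u]\widetilde S_{[w]}V$ provides $V_{x',u'}$ with $u_{x',u'}S_{w_0}V_{x',u'}$, one builds $Z_{x',u'}$ of $R$-successors landing in the right classes exactly as you did, applies \kgen{P\textsubscript{0}} at $w_0,x_0,u_{x',u'}$ to get $u_{x',u'}S_{x_0}Z'_{x',u'}$, and finally uses the $S$-transfer part of (back) to obtain $Z''_{x',u'}$ with $u'S_{x'}Z''_{x',u'}$ and $\widetilde{Z''_{x',u'}}\subseteq Z$. The union over all $(x',u')$ then witnesses $[u]\widetilde S_{[x]}Z'$. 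The point is that one never needs an $R$-predecessor of an \emph{arbitrary} $x'$; one always works at the fixed $x_0$ and uses (back) to transport results to $(x',u')$.
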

\begin{proof}
Assume $[w] \widetilde{R} [x] \widetilde{R} [u] \widetilde{S}_{[w]} V$ and
$\widetilde{R}[[v]]\cap Z\neq \emptyset $ for each $[v]\in V.$
We claim that there exists $Z'\subseteq Z$ such that $[u]\widetilde{S}_{[x]} Z'.$

Since $[w] \R [x]$, there are $w_0 \in [w]$ and $x_0 \in [x]$ such that $w_0Rx_0$. 
Let $x'\in[x]$ and $u'\in[u]$ be any worlds such that $x'Ru'$. 
The condition (back) implies that there is a world $u_{x', u'}$ such that $x_0Ru_{x', u'}$ and 
$u_{x', u'} \sim_{\mathfrak{M}} u'$.
Now, $[u]\widetilde{S}_{[w]} V$, $u_{x', u'}\in[u]$ and $w_0 R u_{x', u'}$ imply
there is a set $V_{x', u'}$ such that $u_{x', u'} S_{w_0} V_{x', u'}$ and 
$\widetilde{V_{x', u'}}\subseteq V.$
Since $\widetilde{R}[[v]]\cap Z\neq\emptyset$ for each $[v]\in V$,  we have
$\widetilde{R}[[v]]\cap Z\neq \emptyset$ for each $v\in V_{x', u'}$.
For each $v\in V_{x', u'}$, choose a world $z_v$ such that $[z_v]\in \widetilde{R}[[v]]\cap Z$.
Now $[v]\widetilde{R}[z_v]$ implies that there are some $v'\in [v]$ and $z'_v\in [z_v]$ such that $v'Rz_v'$.
Applying (back), we can find a world $z''_v$ such that $vRz''_v$ and 
$z'_v\sim_\mathfrak{M} z''_v$.
Put $Z_{x', u'}=\{ z''_v : v\in V_{x', u'} \}$.
Note that we have $R[v]\cap Z_{x', u'}\neq \emptyset$ for each $v\in V_{x', u'}.$

Applying \kgen{P\textsubscript{0}} gives $u_{x', u'} S_{x_0} Z'_{x', u'}$ for some 
$Z'_{x', u'}\subseteq Z_{x', u'}$. 
Clearly $\widetilde{Z'_{x', u'}}\subseteq \widetilde{Z_{x', u'}}\subseteq Z$.
Continuing our first application of (back), there is a set $Z''_{x', u'}$ such that $u'S_{x'} Z''_{x', u'}$, and for each 
$z''\in Z''_{x', u'}$ there is $z'\in Z'_{x', u'}$ such that $z' \sim_\mathfrak{M} z''$.
This implies $\widetilde{Z''_{x', u'}}\subseteq \widetilde{Z'_{x', u'}}$.
Let $T = \bigcup_{x' \in [x], u' \in [u], x'Ru' } Z''_{x', u'}$ and $Z' = \widetilde{ T }$. 
It is easy to see that $Z'\subseteq Z$. 
Lemma \ref{lema-pomoc} implies $Z'\subseteq \widetilde{R}[[x]]$. 
We have $u'S_{x'} Z''_{x', u'}$ with $\widetilde{Z''_{x', u'}} \subseteq Z'$ for all $x' \in [x]$ and $u' \in [u]$ with 
$x'Ru'$.
Thus, $[u]\widetilde{S}_{[x]} Z'.$
\end{proof}

\begin{corollary}
\il{P\textsubscript{0}} is decidable.
\end{corollary}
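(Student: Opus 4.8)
The plan is to read off decidability from the two main results just established for \il{P\textsubscript{0}}: completeness with respect to generalized semantics (Theorem~\ref{ILP0-complete}) and the finite model property, which follows from Lemma~\ref{ILP0-fmp} together with Lemma~\ref{tm1}. First I would observe that \il{P\textsubscript{0}} is presented by a recursive set of axiom schemata plus modus ponens and necessitation, so the set of its theorems is recursively enumerable; since a set that is recursively enumerable with recursively enumerable complement is decidable, it then suffices to show that the set of non-theorems of \il{P\textsubscript{0}} is recursively enumerable as well.

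For that, I would argue that a formula $A$ is a non-theorem if and only if $\neg A$ is satisfiable at a world of some \emph{finite} \ilgen{P\textsubscript{0}}-model. The ``only if'' direction is the finite model property: given $\il{P\textsubscript{0}}\nvdash A$, take $\mathcal{D}$ to be the finite, effectively computable closure of $\{A,\top\}$ under subformulas and single negations; by Lemma~\ref{lemma-main} and the proof of Theorem~\ref{ILP0-complete}, the \il{P\textsubscript{0}}-structure for $\mathcal{D}$ (Definition~\ref{ilx-struktura}) is an \ilgen{P\textsubscript{0}}-model with a world at which $\neg A$ holds; filtering it as in Lemma~\ref{tm1}, and iterating the construction once more to reach the finite model $\widetilde{\widetilde{\mathfrak{M}}}$, gives a finite generalized Veltman model still satisfying $\neg A$, which by Lemma~\ref{ILP0-fmp} (whose proof, through Lemma~\ref{lema-pomoc}, applies to both filtration steps) again possesses \kgen{P\textsubscript{0}}. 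The ``if'' direction is soundness of \il{P\textsubscript{0}} over its characteristic class of generalized frames, a routine induction on derivations.

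Having this characterization, I would note that the condition ``$\neg A$ holds at a world of some finite \ilgen{P\textsubscript{0}}-model'' is semi-decidable: enumerate all finite generalized Veltman models up to isomorphism; for each, decide whether it is a generalized Veltman model and whether it satisfies \kgen{P\textsubscript{0}} (decidable, since both conditions quantify only over the finitely many worlds and subsets of the structure); and then evaluate the forcing relation on the subformulas of $A$ to test whether $\neg A$ holds somewhere. This shows the non-theorems form a recursively enumerable set, and hence, with the first step, that \il{P\textsubscript{0}} is decidable.

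I do not expect a genuine obstacle here; the work has already been done in Lemma~\ref{ILP0-fmp}. The one point that needs a word of justification is that filtration keeps us inside the class of \ilgen{P\textsubscript{0}}-models even on the second iteration — this is exactly what the remark following Lemma~\ref{lema-pomoc} secures. If one would rather not package the argument via two complementary recursively enumerable sets, an equivalent route is to extract from the filtration a computable function $f$ such that any non-theorem $A$ is already refuted in an \ilgen{P\textsubscript{0}}-model with at most $f(|A|)$ worlds, turning the search above into a finite one.
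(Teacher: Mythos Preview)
Your proposal is correct and is exactly the standard argument the paper has in mind: completeness (Theorem~\ref{ILP0-complete}) plus the finite model property obtained from Lemma~\ref{ILP0-fmp} and Lemma~\ref{tm1} yield decidability via the usual r.e.-theorems/r.e.-non-theorems split. The paper states the corollary without proof, so you are simply spelling out the routine details it suppresses; your care about the second filtration step being covered by the remark after Lemma~\ref{lema-pomoc} is appropriate and matches the paper's setup.
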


Let us prove the same for \il{R}.

\begin{lemma}\label{ILR_fmp}
Let $\mathfrak{M}=(W,R,\{ S_w : w\in W\},\Vdash)$ be the \il{R}-structure for $\mathcal{D}$  and
 $\sim_\mathfrak{M}$ be the largest bisimulation on $\mathfrak{M}.$
Then the filtration $\widetilde{\mathfrak M}$ as defined in Lemma \ref{tm1} possesses the 
property \kgen{R}.
\end{lemma}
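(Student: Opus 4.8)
The proof will mirror the structure of Lemma \ref{ILP0-fmp}: start from data in the filtration $\widetilde{\mathfrak M}$, unwind it to the original \il{R}-structure $\mathfrak M$ via the (back) clause of bisimulation, apply the characteristic property \kgen{R} there (available by Theorem \ref{ILR_complete}), and push the witness back up to $\widetilde{\mathfrak M}$. Concretely, assume $[w]\widetilde R[x]\widetilde R[u]\,\widetilde S_{[w]}\,V$ and let $C\in\mathcal C([x],[u])$ (the family of choice sets computed in $\widetilde{\mathfrak M}$). We must produce $U\subseteq V$ with $[x]\,\widetilde S_{[w]}\,U$ and $\widetilde R[U]\subseteq C$. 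Fix, using $[w]\widetilde R[x]$, representatives $w_0\in[w]$, $x_0\in[x]$ with $w_0Rx_0$ (here Lemma \ref{lema-pomoc} guarantees the boxed-formula side condition in the definition of $\widetilde R$ is automatic in \il{X}-structures). For each pair $x'\in[x]$, $u'\in[u]$ with $x'Ru'$, use (back) to get $u_{x',u'}\in[u]$ with $x_0Ru_{x',u'}$, then $w_0Ru_{x',u'}$ together with $[u]\widetilde S_{[w]}V$ gives a set $V_{x',u'}$ with $u_{x',u'}S_{w_0}V_{x',u'}$ and $\widetilde{V_{x',u'}}\subseteq V$.

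\emph{Transferring the choice set downstairs.} The main conceptual point is to check that the ``upstairs'' choice set $C$ pulls back to a choice set in $\mathfrak M$ relative to $x_0,u_{x',u'}$. Set $C_0=\{\,z\in R[x_0] : [z]\in C\,\}$ (intersected with $\dot R$-reachability as needed). One must verify $C_0\in\mathcal C(x_0,u_{x',u'})$, i.e.\ that every $Z$ with $u_{x',u'}S_{x_0}Z$ meets $C_0$. Given such a $Z$: continuing the (back) application at the level of $S_{x_0}$-successors (exactly as in the proof of Lemma \ref{ILP0-fmp}, where (back) is applied twice) yields a $\widetilde S_{[x]}$-successor $\widetilde Z'$ of $[u]$ inside $\widetilde Z$; since $C\in\mathcal C([x],[u])$, $\widetilde Z'$ meets $C$, hence $\widetilde Z$ meets $C$, hence $Z$ meets $C_0$. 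So $C_0$ is indeed a choice set, and \kgen{R} applied in $\mathfrak M$ to $x_0Ru_{x',u'}S_{w_0}V_{x',u'}$ (note $w_0Rx_0Ru_{x',u'}S_{w_0}V_{x',u'}$) produces $U_{x',u'}\subseteq V_{x',u'}$ with $x_0S_{w_0}U_{x',u'}$ and $R[U_{x',u'}]\subseteq C_0$, whence $\widetilde R[\widetilde{U_{x',u'}}]\subseteq C$.

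\emph{Assembling the final witness.} For each admissible pair $(x',u')$, one more use of the ``forth'' half of the bisimulation along $x_0S_{w_0}U_{x',u'}$ against the $S_{x'}$-structure (analogously to the $Z''_{x',u'}$ step in Lemma \ref{ILP0-fmp}) yields $U''_{x',u'}$ with $x'S_{w'}U''_{x',u'}$ and $\widetilde{U''_{x',u'}}\subseteq\widetilde{U_{x',u'}}$; actually, since the relevant $S_w$ is from the \emph{same} frame we only need $\widetilde{U_{x',u'}}$ itself. Put $U=\bigcup_{x',u'}\widetilde{U_{x',u'}}$. Then $U\subseteq V$, and Lemma \ref{lema-pomoc} gives $U\subseteq\widetilde R[[x]]$. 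For every $x'Ru'$ with $x',u'$ in the right classes we have an $S_{w'}$-witness inside $U$, so by the definition (2) of $\widetilde S$ in Lemma \ref{tm1}, $[x]\,\widetilde S_{[w]}\,U$. Finally $\widetilde R[U]=\bigcup\widetilde R[\widetilde{U_{x',u'}}]\subseteq C$ from the previous paragraph.

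\emph{Expected obstacle.} The delicate step is the pull-back of the choice set: showing $C_0\in\mathcal C(x_0,u_{x',u'})$ requires carefully matching the two-layer bookkeeping—an $S_{x_0}$-successor $Z$ downstairs must be lifted to a $\widetilde S_{[x]}$-successor $\widetilde Z'$ of $[u]$ upstairs with $\widetilde Z'\subseteq\widetilde Z$, which uses the (back) clause at the $S$-level exactly in the form already exploited in Lemma \ref{ILP0-fmp}. One must also be careful that $C\in\mathcal C([x],[u])$ is stated with $\widetilde S_{[x]}$ (not $\widetilde S_{[w]}$) so the representatives $x_0,u_{x',u'}$ are the correct ones, and that the quantifier over all pairs $(x',u')$ is handled uniformly so that a single $U$ works for the definition (2) of $\widetilde S_{[w]}$. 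No new lemma beyond those already in the excerpt should be needed.
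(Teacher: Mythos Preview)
There are two genuine gaps in your plan, both stemming from modeling the argument too closely on the \il{P\textsubscript{0}} case.

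\textbf{Wrong index set for the final witness.} The target relation is $[x]\,\widetilde S_{[w]}\,U$, not $[u]\,\widetilde S_{[x]}\,U$. By clause (2) of Lemma \ref{tm1} this requires, for every $w'\in[w]$ and $y\in[x]$ with $w'Ry$, a set $V'$ with $y\,S_{w'}\,V'$ and $\widetilde{V'}\subseteq U$. You instead index by pairs $(x',u')$ with $x'Ru'$ (copied from the \il{P\textsubscript{0}} proof, where the goal really was $[u]\,\widetilde S_{[x]}\,Z'$), and the only $S$-transitions you ever produce are $x_0\,S_{w_0}\,U_{x',u'}$ at the \emph{fixed} pair $(w_0,x_0)$. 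Your closing sentence ``for every $x'Ru'$ \dots\ we have an $S_{w'}$-witness'' is checking the wrong quantifier and $w'$ is never instantiated. In the paper the loop is over $(w',y)$ with $w'\in[w]$, $y\in[x]$, $w'Ry$; for each such $y$ one picks $u_y\in[u]$ with $yRu_y$ and $C_y\in\mathcal C(y,u_y)$, and then applies \kgen{R} to $w'RyRu_y\,S_{w'}\,V_{w',y}$. The nontrivial extra step, absent from your plan, is showing that the pullback choice set $C_y$ works for \emph{every} $y\in[x]$, not just for one fixed $x_0$.

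\textbf{The choice-set pullback argument is circular.} Given a single $Z$ with $u_{x',u'}\,S_{x_0}\,Z$, the continuation of (back) yields \emph{one} set $Z'$ with $u'\,S_{x'}\,Z'$. That is a single witness, not an $\widetilde S_{[x]}$-successor of $[u]$: to conclude $[u]\,\widetilde S_{[x]}\,\widetilde Z$ (or $\widetilde{Z'}$) you would need witnesses for \emph{all} pairs $(x'',u'')$ with $x''Ru''$, which you do not have. Hence you cannot invoke $C\in\mathcal C([x],[u])$ here. The paper avoids this by a contradiction argument: if $C_{x'}\notin\mathcal C(x',u')$ for \emph{every} pair $(x',u')$, then each pair contributes a bad $Z_{x',u'}$ disjoint from $C_{x'}$, and only then does the union $\widetilde Z=\widetilde{\bigcup Z_{x',u'}}$ become an $\widetilde S_{[x]}$-successor of $[u]$ to which $C$ applies, yielding a contradiction. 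This produces some $(x_0,u_0)$ with $C_{x_0}\in\mathcal C(x_0,u_0)$, and a second (back) step propagates this to $C_y\in\mathcal C(y,u_y)$ for all $y\in[x]$, which is exactly what the correct outer loop needs.
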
		
\begin{proof}		
Assume $[w] \Rt [x] \Rt [u] \Swt {V}$, and let $C \in \mathcal{C}([x], [u])$ be an arbitrary choice set.
We are to prove that there is a set $U$ such that $\widetilde{U}\subseteq V,$ $[x]\widetilde{S}_{[w]} \widetilde{U}$ and
$\widetilde{R}[\widetilde{U}]\subseteq C$.

Put $C_{x'} = \{ c \in R[x'] : [c] \in C \}$ for all $x'\in [x]$.

Let us first prove that for some $x_0\in [x], u_0\in [u]$ with $x_0Ru_0$ we have 
$C_{x_0} \in \mathcal{C}(x_0, u_0)$. 
Suppose not. Then for all $x'\in [x], u'\in [u]$ 
with $x'Ru'$, there is a set $Z_{x', u'}$ such that 
$u' S_{x'} Z_{x', u'}$ with $Z_{x',u'} \cap C_{x'} = \emptyset$. 
Put $Z = \bigcup_{x'\in [x], u'\in [u], x'Ru'} Z_{x', u'}$.
Lemma \ref{lema-pomoc} implies $\widetilde{Z}\subseteq \widetilde{R}[[x]]$. 
Thus $[u]\St{x} \widetilde{Z}$.  
Since $C\in \mathcal{C}([x],[u])$, there is $z\in Z$ such that $[z]\in C\cap \widetilde{Z}$. 
Thus $z \in Z_{x', u'}$ for some $x'\in [x], u'\in [u]$ and $x'Ru'$. The definition of $C_{x'}$ 
implies $z \in C_{x'}$. Thus, $Z_{x',u'} \cap C_{x'} \neq \emptyset$, a contradiction. 

Now we claim that for all $y \in [x]$ there is $u_y \sim u_0$ with $y R u_y$ and 
$C_{y} \in \mathcal{C}(y, u_y)$. 
Since $y \sim_\mathfrak{M} x_0$ and $x_0Ru_0$,  the (back) condition implies that there is a world $u_y$ such that $u_y \sim_\mathfrak{M} u_0$ and $yRu_y$ (and other properties that we will return to later). 
We will show that $C_{y} \in \mathcal{C}(y, u_y)$.
Let  $Z'$ be such that $u_y S_y Z'$, and we are to prove that 
$C_{y} \cap Z' \neq \emptyset$. 
The earlier instance of (back) condition for $u_y$ further implies that there is a set $Z$ with $u_0 S_{x_0} Z$, and for all 
$z \in Z$ there is $z' \in Z'$  with  $z \sim_\mathfrak{M} z'$. 
Let $z\in Z \cap C_{x_0}$ be an arbitrary element (which exists because, as we proved, $C_{x_0}$ is a choice set). 
So there is $z' \in Z'$ such that $z' \sim_\mathfrak{M} z$.  
Since $[z] \in C$, equivalently $[z'] \in C$, we have $z'\in C_{y}$. In particular, $Z' \cap C_{y} \neq \emptyset$. Thus $C_y \in \mathcal{C}(y, u_y)$. 

Let us prove that there is a set $U$ such that $\widetilde{U}\subseteq V,$ $[x]\widetilde{S}_{[w]} \widetilde{U}$
and $\widetilde{R}[\widetilde{U}]\subseteq C.$
Let $w' \in [w]$ and $y \in [x]$ be such that $w'Ry$. 
Since $[u]\Swt V$, there is a set 
$V_{w',y}$ such that $u_y S_{w'} V_{w',y}$ and $\widetilde{V_{w',y}} \subseteq V$.
Applying $\kgen{R}$ with $C_y$, there is $U_{w',y}\subseteq V_{w',y}$ such that $y S_{w'} U_{w', y}$ and  
$R[U_{w',y}] \subseteq C_{y}$. 
Let $U= \bigcup_{w' \in [w], y \in [x], w'Ry} U_{w',y}$. 
Clearly $\widetilde{U}\subseteq V.$
Lemma \ref{lema-pomoc} implies $\widetilde{U}\subseteq \widetilde{R}[[w]].$
The definition of $\Swt$ implies $[x] \Swt \widetilde{U}$. 

It remains to verify that $\Rt[\widetilde{U}] \subseteq C$. 
Let $t\in U $ and $z\in W$ be such that $[t]\widetilde{R} [z]$.
Then we have $t \in U_{w', y}$ for some $w' \in [w]$ and $y \in [x]$. 
Since $[t]\Rt [z]$, there are $t'\in [t]$ and $z' \in [z]$ with $t' R z'$. 
The condition (forth) implies that there is $z''$ such that 
$tRz''$ and $z'\sim_\mathfrak{M} z''.$ 
Since $R[U_{w',y}]\subseteq C_{y})$ and $z''\in R[U_{w',y}]$, we have $z''\in C_{y}$.
Definition of $C_{y}$ implies $[z'']\in C$, or equivalently, $[z] \in C$. 
\end{proof}	

\begin{corollary}
\il{R} is decidable.
\end{corollary}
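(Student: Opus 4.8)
The plan is to derive decidability from completeness (Theorem~\ref{ILR_complete}) together with the finite model property established via filtration (Lemma~\ref{tm1} and Lemma~\ref{ILR_fmp}), in the standard way: \il{R} is recursively axiomatised, so $\il{R}\vdash A$ is semidecidable; and since \kgen{R} is the characteristic property of \il{R}, every \ilgen{R}-frame validates \il{R}, so $\il{R}\vdash A$ implies that $A$ holds in every (in particular every finite) \ilgen{R}-model. Hence it suffices to attach to each formula $A$ a \emph{computable} bound $N(A)$ such that, whenever $\il{R}\nvdash A$, there is a finite \ilgen{R}-model of size at most $N(A)$ refuting $A$; then $\il{R}\vdash A$ if and only if $A$ is valid in all finite \ilgen{R}-models of cardinality $\le N(A)$, and the latter is decidable, because recognising whether a finite generalized Veltman frame satisfies \kgen{R}, and whether $A$ holds at every world of a finite model, are both effective.

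To produce $N(A)$, fix $A$ and let $\mathcal{D}$ be the closure of $\{A,\top\}$ under subformulas and single negations; then $\mathcal{D}$ and the associated adequate set $\Gamma_\mathcal{D}$ are finite with sizes bounded by computable functions of $|A|$, and $A\in\Gamma_\mathcal{D}$. Assume $\il{R}\nvdash A$, i.e.\ $\neg A$ is \il{R}-consistent. By Lemma~\ref{lemma-main} with $X=\{\textsf{R}\}$ and the proof of Theorem~\ref{ILR_complete}, the \il{R}-structure $\mathfrak{M}$ for $\mathcal{D}$ is an \ilgen{R}-model containing a world $w$ with $\neg A\in w$, whence $\mathfrak{M},w\nVdash A$. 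This $\mathfrak{M}$ need not be finite, so I would apply Lemma~\ref{tm1} twice and pass to $\widetilde{\widetilde{\mathfrak{M}}}$, which is a finite generalized Veltman model; since each filtration step preserves the truth value of every formula in $\Gamma_\mathcal{D}$ and $A\in\Gamma_\mathcal{D}$, the corresponding world of $\widetilde{\widetilde{\mathfrak{M}}}$ still refutes $A$. By Lemma~\ref{ILR_fmp} and the remark following Lemma~\ref{lema-pomoc}, $\widetilde{\widetilde{\mathfrak{M}}}$ still possesses \kgen{R}, so it is an \ilgen{R}-model; its cardinality is bounded by a computable function of $|\Gamma_\mathcal{D}|$, hence of $|A|$, and this is our $N(A)$.

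The mathematical content is entirely imported from the earlier lemmas; the one point requiring attention is that the \il{R}-structure is in general infinite, so obtaining a finite model — and, crucially, an \emph{effective} bound on its size — genuinely relies on the filtration of Lemma~\ref{tm1} and on the preservation of \kgen{R} from Lemma~\ref{ILR_fmp}. I do not anticipate any real obstacle beyond this bookkeeping.
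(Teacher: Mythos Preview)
Your proposal is correct and is exactly the argument the paper leaves implicit: combine Theorem~\ref{ILR_complete}, Lemma~\ref{tm1}, and Lemma~\ref{ILR_fmp} with recursive axiomatisability in the standard way. The one subtlety you flag---that Lemma~\ref{ILR_fmp} is stated for the \il{R}-structure while finiteness requires a second filtration---is handled, as you note, by the remark after Lemma~\ref{lema-pomoc} (the proof of Lemma~\ref{ILR_fmp} uses only \kgen{R} of the base model, Lemma~\ref{lema-pomoc}, and the bisimulation clauses, all available at the second step).
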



\end{document}